\documentclass[12pt]{amsart}
\usepackage{amsfonts}
\usepackage{graphicx}
\usepackage{xcolor}
\usepackage{amsmath}
\usepackage{amssymb}
\usepackage{soul}

\usepackage{enumitem}
\setlist[enumerate]{topsep=2mm,parsep=0pt,partopsep=0mm,itemsep=1mm,leftmargin=15mm,labelsep=2mm}

\sloppy

\setlength{\textwidth}{5.5in}
\newtheorem{theorem}{Theorem}
\newtheorem{proposition}{Proposition}[section]
\newtheorem{corollary}[proposition]{Corollary}
\newtheorem{lemma}[proposition]{Lemma}
\newtheorem{result}{Theorem}
\setcounter{result}{64}

\theoremstyle{remark}
\newtheorem{remark}[proposition]{Remark}

\newtheorem{definition}[proposition]{Definition}

\numberwithin{equation}{section}

\newcommand{\proofof}[1]{\fontseries{bx}\fontshape{it}\selectfont Proof of #1}

\textwidth=15cm \textheight=22cm \hoffset=-1.5cm \voffset=-.5cm \headsep=7mm

\newcommand{\ind}{\boldsymbol{1}}
\let\Re\undefined
\let\Im\undefined
\DeclareMathOperator{\Re}{\mathtt{Re\hskip-.07ex}}
\DeclareMathOperator{\Im}{\mathtt{Im}}

\let\ge=\geqslant
\let\le=\leqslant

\newcommand{\C}{{\mathbb{C}}}
\newcommand{\R}{{\mathbb{R}}}
\newcommand{\N}{{\mathbb{N}}}
\let\Complex=\C
\let\Real=\R
\let\Natural=\N

\newcommand{\UH}{{\mathbb{H}}}
\newcommand{\U}{\mathfrak{U}\hskip.05em}
\newcommand{\UD}{{\mathbb{D}}}
\newcommand{\UC}{{\partial\UD}}
\newcommand{\di}{\mathrm{d}}
\newcommand{\DI}{\,\di}

\newcommand{\Hol}{{\sf Hol}}

\def\id{\mathop{\mathrm{id}}}
\newcommand{\anglim}{\angle\lim}
\newcommand{\neqC}{\subsetneq\C}
\newcommand{\DC}{{D\neqC}}

\newcommand{\BF}{\mathfrak{BF}}
\newcommand{\Gen}{\mathcal{G}}

\newcommand{\mcite}[1]{\csname b@#1\endcsname}

\newcommand{\REM}[1]{\relax}

\begin{document}
\title[Loewner Theory for Bernstein functions I]{Loewner Theory for Bernstein functions I: evolution families and differential equations}
\author[P.~Gumenyuk, T.~Hasebe, J.\,L. P\'erez]{Pavel Gumenyuk, Takahiro Hasebe, Jos\'e-Luis P\'erez}

\address{P.~Gumenyuk: Department of Mathematics, Politecnico di Milano, via E.~Bonardi~9, 20133 Milan, ITALY} \email{pavel.gumenyuk@polimi.it}

\address{T.~Hasebe: Department of Mathematics, Hokkaido University, North~10, West~8, Kita-ku, Sapporo 060-0810, JAPAN}
\email{thasebe@math.sci.hokudai.ac.jp}
\thanks{This work is supported by the Research Institute for Mathematical Sciences,
an International Joint Usage/Research Center located in Kyoto University, and by JSPS Grant-in-Aid for Young Scientists 19K14546, JSPS Scientific Research 18H01115 and JSPS Open Partnership Joint Research Projects grant no.~JPJSBP120209921.}

\address{J.\,L. P\'erez: Department of Probability and Statistics, Centro de Investigaci\'on en Matem\'aticas, A.C., Calle Jalisco s/n, C.P. 36240, Guanajuato, MEXICO}
\email{jluis.garmendia@cimat.mx}

\subjclass[2020]{30D05, 30C80, 37F44, 60J80}

\begin{abstract}
One-parameter semigroups of holomorphic functions appear naturally in various applications of Complex Analysis, and in particular, in the theory of (temporally) homogeneous Markov processes. A suitable analogue of one-parameter semigroups in the inhomogeneous setting is the notion of a (reverse) evolution family. In this paper we study evolution families formed by Bernstein functions, which play the role of Laplace exponents for inhomogeneous continuous-state branching processes. In particular, we characterize all Herglotz vector fields that generate such evolution families and give a complex-analytic proof of a qualitative description equivalent to Silverstein's representation formula for the infinitesimal generators of one-parameter semigroups of Bernstein functions. We also establish several sufficient conditions for families of holomorphic self-maps, satisfying the algebraic part in the definition of an evolution family, to be absolutely continuous and hence to be described as solutions to the generalized Loewner\,--\,Kufarev differential equation. Most of these results are then applied in the sequel paper~\cite{Paper2} to study continuous-state branching processes.
\vskip-2cm
\end{abstract}

\maketitle

\tableofcontents
\vskip2cm

\section{Introduction}
Particular interest to look into families of Bernstein functions from the viewpoint of Loewner Theory comes from Probability Theory.
In general, complex-analytic tools are known to play an important role in the study of Markov branching processes, see e.g. \cite{BranchingProcesses, Goryainov-survey, Harris}. A very classical example is the Galton\,--\,Watson process, which is a (temporally) homogeneous Markov chain with the one-step transition probabilities encoded in the so-called generating (holomorphic) function $f\colon \UD\to\UD:=\{{z\in\Complex\colon}|z|<1\}$ as its Taylor coefficients at~$z=0$. Evolution of the stochastic process corresponds to a (deterministic) dynamics in a complex domain: the generating function for the $n$-step transition probabilities coincides with the $n$-th iterate of~$f$ defined as usual by
$$
   f^{\circ n}:=\underbrace{\,f\circ\ldots\circ f}_{n\text{~times}}\colon \UD\to\UD.
$$
In case of a homogeneous branching process with continuous time, iterates are replaced by a continuous one-parameter semigroup of holomorphic self-maps, see Section~\ref{SS_one-param} for the definition and basic properties.

In a similar manner, inhomogeneous branching processes lead to non-autonomous holomorphic dynamical systems. In this setting the role of one-parameter semigroups is played by the so-called (reverse) evolution families, see Section~\ref{SS_REF}.

Independently from applications to stochastic processes,  one-parameter semigroups and iteration of holomorphic functions have been a subject of deep studies in Complex Analysis since early 1900s. Evolution families, in turn, constitute  one of the cornerstone concepts in Loewner Theory, which originated from Loewner's seminal paper~\cite{Loewner} of~1923 and since then has been serving as a powerful tool in the theory of conformal mappings, see e.g. \cite{AbateBracci:2010,Loewner-survey} for a detailed survey.

Applications of Loewner Theory to branching processes (with discrete states) were studied in detail by Goryainov~[\mcite{Goryainov}\,--\,\mcite{Goryainov3}]. It is worth mentioning that similar relationships between stochastic processes and evolution families (or one-parameter semigroups in the homogeneous case) have been recently discovered in Non-Commutative Probability, see e.g. \cite{Bauer,UFranz,Sebastian,FHS-Diss}.

This paper is motivated by yet another interesting case, in which the transition probabilities can be encoded in an evolution family formed by holomorphic self-maps of the right half-plane. Continuous-state (Markov) branching processes introduced in 1958 by Ji\v{r}ina~\cite{MJ1958}, see also~\cite{Gre74}, have been attracting increasing interest in the last decades, see e.g.~\cite{FangLi} or~\cite{Li2020} and references therein. Because of the branching property, the transition probability measures $k_{s,t}(x,\cdot)$ in such a process~$(X_t)$ are infinitely divisible and, as a result,  Bernstein functions come into play via the Laplace transform:
$$
\mathbb E\big[e^{-\zeta X_t}\big|X_s=x\big]~=\,\int_0^{+\infty}e^{-\zeta y}\, k_{s,t}(x,\di y)~=~\exp(-xv_{s,t}(\zeta)),\quad {x,\,\zeta\ge0},
$$
where $v_{s,t}\colon [0,+\infty)\to[0,+\infty)$ is a Bernstein function, which is uniquely defined by the transition kernel~$k_{s,t}$ and which we refer to as \textsl{the Laplace exponent} of the process.
In turn, the Chapman\,--\,Kolmogorov equation  leads to a sort of semigroup property $v_{s,t}\circ v_{t,u}=v_{s,u}$ for any $s,t,u$ with $0\le s\le t\le u$; essentially, this means that the Laplace exponents form a reverse evolution family. For more details we refer the reader to our sequel paper~\cite{Paper2}.

While homogeneous continuous-state branching processes are well understood, see e.g. \cite[Chapter~10]{Kyp14}, a systematic study of the inhomogeneous case has been launched only recently. In particular, in~\cite{FangLi} the authors construct inhomogeneous continuous-state branching processes starting from an integral evolution equation for the Laplace exponents~$v_{s,t}$ regarded as self-maps of~$(0,+\infty)$.

A well-known remarkable fact is that every Bernstein function $f\not\equiv0$ extends to a holomorphic self-map of the right half-plane  $\UH:={\{z\in\C\colon\Re z>0\}}$. Another important property is that the composition of two Bernstein functions is again a Bernstein function. In other words, the holomorphic extensions of Bernstein functions $f\not\equiv0$ form a subsemigroup $\BF$ in the topological semigroup $\Hol(\UH,\UH)$ of all holomorphic self-maps of~$\UH$ endowed with the operation of composition and the usual topology of locally uniform convergence. Moreover, $\BF$ is topologically closed in~$\Hol(\UH,\UH)$. See Section~\ref{SS_Bernstein} for more details.

The purpose of this paper is to develop complex-analytic methods based on the modern Loewner Theory which would be useful, in particular, for studying inhomogeneous continuous-state branching processes. A part of our results are then applied in the sequel paper~\cite{Paper2}. Other results are aimed to future investigation of branching processes and/or represent some general interest in the frames of Loewner Theory and its applications.

\subsection{Organization of the paper and summary of main results}
The paper is organized as follows. In the next section, Section~\ref{S_preliminaries}, we present necessary background on one-parameter semigroups and (reverse) evolution families and on boundary fixed points of holomorphic self-maps; the last subsection is devoted to basic properties of Bernstein functions (regarded as holomorphic functions in the right-half plane~$\UH$).

In Section~\ref{S_results} we establish our main results.
We start by proving Theorems~\ref{TH_generators-convex-cone} and~\ref{TH_EvolFam-HVF} in Sections~\ref{SS_one-param-in-semigroups} and~\ref{SS_EF-in-semigroups}, respectively, which concern infinitesimal description of an arbitrary topologically closed (sub)semigroup $\U$ in $\Hol(D,D)$, where $\DC$ is a simply connected domain.
Theorem~\ref{TH_generators-convex-cone} is a general version of a result known for several special cases; it states that the infinitesimal generators associated to one-parameter semigroups in~$\U$ form a topologically closed convex cone. Theorem~\ref{TH_EvolFam-HVF} deals with the evolution families. We prove that an evolution family $(w_{s,t})$ generated by a Herglotz vector field~$\phi$ is contained in~$\U$ if and only if for a.e.\!{} ``frozen'' instant~$t$, $\phi(\cdot,t)$ is an infinitesimal generator of a one-parameter semigroup in~$\U$.
Both theorems apply to the case of the semigroup $\BF\subset\Hol(\UH,\UH)$ consisting of all Bernstein functions different from the identical zero.

Theorem~\ref{TH_EvolFam-HVF} reduces the study Herglotz vector fields generating evolution families of Bernstein functions to the study of infinitesimal generators associated to one-parameter semigroups in~$\BF$. Such generators, which we refer to as \textsl{Bernstein generators}, appear naturally in Stochastic Processes. A representation formula for Bernstein generators was found long time ago by Silverstein~\cite{Sil68}. Using complex-analytic tools, in particular, the theory of one-parameter semigroups, in Section~\ref{SS_Bernstein-generators} we obtain a qualitative characterization of Bernstein generators, see Theorem~\ref{TH_BG}. As mentioned after the statement of the theorem, it is easily seen to be equivalent to Silverstein's representation. Moreover, as corollaries, we obtain analogous representation of Bernstein generators for the cases when the corresponding one-parameter semigroup has a boundary regular fixed point at $0$ or at~$\infty$.

Section~\ref{SS_AC-conditions} is devoted to finding sufficient conditions for families satisfying the first two (algebraic) conditions in the definitions of reverse and usual evolution families, see Section~\ref{SS_REF}, to be absolutely continuous, i.e. to admit associated Herglotz vector fields and hence to be described with the help of (a general version of) the Loewner\,--\,Kufarev equations \eqref{EQ_ini-LK-ODE}\,--\,\eqref{EQ_ini-invLK-ODE}.
Theorems~\ref{TH_diff-one-trajectory} and~\ref{TH_reverse-diff-one-trajectory} deal with general (usual and reverse) evolution families having a boundary Denjoy\,--\,Wolff point, while Theorem~\ref{TH_AC-BF-EF} and its Corollary~\ref{CR_AC-BF-REF} concern specifically families of Bernstein functions and give sufficient conditions for absolute continuity in terms of the local behaviour at~$0$. Motivated by the latter result, in the last section, Section~\ref{SS_2nd-derovative}, we make sure that for absolutely continuous evolution families in~$\BF$, the well-known formulas for the derivatives of the first and the second order w.r.t. the initial condition at an internal fixed point can be extended to the boundary regular fixed point at~$0$.

\section{Preliminaries}\label{S_preliminaries}
\subsection{One-parameter semigroups of holomorphic self-maps}\label{SS_one-param}
We start by introducing some general notation and terminology. For an arbitrary domain $D\subset\Complex$ we denote by $\Hol(D,\C)$ the set of all holomorphic functions in~$D$. We endow $\Hol(D,\C)$ with the open-compact topology. It is easy to see that a sequence $(f_n)$ converges in ${\Hol(D,\C)}$ if and only if it converges (to the same limit function) locally uniformly in~$D$. Furthermore, for a set $E\subset\C$ we denote by $\Hol(D,E)$ the topological subspace of $\Hol(D,\C)$ formed by all $f\in\Hol(D,\C)$ such that $f(D)\subset E$.
\begin{definition}\label{DF_semigr-of-selfmaps}
By a \textsl{semigroup of holomorphic self-maps} of a domain ${D\subset\C}$ we mean a set $\U\subset\Hol(D,D)$ containing the identity map~$\id_D$ and such that $f\circ g\in\U$ for any~$f,g\in\U$.
We say that such a semigroup $\U$ is \textsl{topologically closed} if $\U$ is (relatively) closed in~$\Hol(D,D)$.
\end{definition}
\begin{definition}
Let $\U$ be a semigroup of holomorphic self-maps of some domain ${D\subset\C}$.
A \textsl{one-parameter semigroup} in~$\U$ is a continuous semigroup homomorphism ${[0,+\infty)\ni t\mapsto v_t\in\U}$ from the semigroup ${\big([0,+\infty),+\big)}$ with the Euclidian topology to the semigroup $(\U,\circ)$ endowed with the topology inherited from~$\Hol(D,D)$.
\end{definition}

Equivalently, $(v_t)\subset\U$ is a one-parameter semigroup if and only if it satisfies the following three conditions: (i)~$v_0=\id_D$; (ii)~$v_{s}\circ v_t=v_{s+t}$ for any $s,t\ge0$; (iii)~$v_t\to\id_D$ in $\Hol(D,D)$ as $t\to0^+$. Thanks to Montel's normality criterion, see e.g. \cite[\S\,II.7, Theorem~1]{Goluzin}, if $\Complex\setminus D$ contains at least two distinct points, then the continuity condition~(iii) is equivalent to the pointwise convergence: $v_t(z)\to z$ as~$t\to0^+$ for each $z\in D$.

\begin{remark}\label{RM_semigr-hyperb-simply-conn}
It is known, see e.g.\,\cite[\S8.4]{BCD-Book}, that in one complex variable, the theory of one-parameter semigroups of holomorphic self-maps is non-trivial only in the following two cases:
\begin{itemize}
\item[(a)] $D$ is conformally equivalent to the unit disk ${\UD:=\{z\colon |z|<1\}}$, i.e. $D$ is simply connected and different from the whole plane~$\C$; \textit{or}
\item[(b)] $D$ is conformally equivalent to $\UD^*:=\UD\setminus\{0\}$.
\end{itemize}
Moreover, case~(b) reduces easily to case~(a) because any one-parameter semigroup~$(\phi_t)$ of holomorphic self-maps of~$\UD^*$ extends holomorphically to~${z=0}$ by setting $\phi_t(0)=0$ for all~$t\ge0$; see \cite[Theorem~8.4.7 on p.\,224]{BCD-Book}.
\end{remark}

Most of the theory in the literature is developed for ${D:=\UD}$ or ${D:=\UH}$, sometimes for the strip ${D:=\{z\colon 0<\Im z<\pi\}}$. For the sake of notational uniformity, we will keep working with an arbitrary simply connected $\DC$, unless explicitly stated otherwise. Clearly, most of the results proved for any particular choice of~$D$ extend, more or less automatically, to the general case with the help of conformal mappings. In particular, it is known \cite{BP78}, see also~\cite{BCM-AMP}, \cite[Chapter~10]{BCD-Book}, or~\cite[\S3.2]{ShoikhetBook} that for any one-parameter semigroup $(v_t)$ in~$\Hol(D,D)$ the limit
\begin{equation}\label{EQ_generator}
 \phi:=\lim_{t\to 0^+}\frac{\id_D-v_t}{t}
\end{equation}
exists in $\Hol(D,\C)$.
 The holomorphic function~$\phi$ is called the \textsl{infinitesimal generator} of~$(v_t)$. The one-parameter semigroup can be recovered from its infinitesimal generator; namely, for each $z\in D$ the map ${[0,+\infty)\ni t\mapsto w:=v_t(z)}$ is the unique solution to
\begin{equation}\label{EQ_autonomous-LK}
\frac{\di w(t)}{\di t}+\phi(w(t))=0,\quad w(0)=z.
\end{equation}
Conversely, if $\phi\in\Hol(D,\C)$ is a semicomplete vector field in~$D$, i.e.\ if for any $z\in D$ the initial value problem~\eqref{EQ_autonomous-LK} has a unique solution $w=w_{z}(t)\in D$ defined for all~$t\ge0$, then the mappings ${v_t(z):=w_z(t)}$, ${z\in D}$, ${t\ge0}$, form a one-parameter semigroup in $\Hol(D,D)$, whose infinitesimal generator coincides with~$\phi$.

In the literature one can find two different conventions concerning the choice of the sign in formula~\eqref{EQ_generator}. Here we follow the convention from~\cite{ShoikhetBook}, which is also often adopted in the literature on Probability Theory. The opposite choice of the sign is made in most of the works in Loewner Theory, e.g. in \cite{BCM1,BRFP2015,CDG_decr}, as well as in many publications on one-parameter semigroups, including~\cite{BCD-Book}.

In what follows, the set of all infinitesimal generators of one-parameter semigroups in the domain~$D$ will be denoted by~$\Gen$, or by~$\Gen\big(\Hol(D,D)\big)$ when we need to specify the domain $D$ explicitly. Furthermore, for a semigroup $\U\subset\Hol(D,D)$, we will denote by $\Gen(\U)$ the set of all $\phi\in\Gen$ for which the associated one-parameter semigroup $(v^\phi_t)$ is contained in~$\U$.

\begin{remark}\label{RM_conformal-trans-of-generators}
Let $\DC$ be a simply connected domain and $f$ be a conformal mapping of $\UD$ onto~$D$. With the help of the variable change $w=f(\omega)$ in equation~\eqref{EQ_autonomous-LK} we see that $\phi:D\to\C$ belongs to $\Gen\big(\Hol(D,D)\big)$ if and only if the function $\psi:=(\phi\circ f)/f'$ belongs to~$\Gen\big(\Hol(\UD,\UD)\big)$.
\end{remark}

\subsection{Absolutely continuous evolution families}\label{SS_REF}
Throughout this section, we fix an interval ${I\subset\R}$ of the form ${I:=[0,T]}$ with ${T\in(0,+\infty)}$ or ${I:=[0,T)}$ with ${T\in(0,+\infty]}$. Furthermore, by $\Delta(I)$ we denote the set $\{(s,t)\colon s,t\in I,\,s\le t\}$. If~${I=[0,+\infty)}$, instead of $\Delta(I)$ we will simply write~$\Delta$.
\newcommand{\AC}{AC_{\mathrm{loc}}}
Finally, $\AC(I)$ will stand for the class of all (real- or complex-valued) functions on~$I$ which are absolutely continuous on each compact subinterval of~$I$.

There are two ways to extend the notion of a one-parameter semigroup to the non-autonomous (or, in probabilistic terminology, time-inhomogeneous) case. The following definitions are slight modifications of the definitions given in \cite{BCM1,CDG_decr}. Most of the results proved in these papers can be easily extended to our setting.
\begin{definition}\label{DF_EF}
An \textsl{(absolutely continuous) evolution family} in a  simply connected domain $\DC$ over the interval~$I$ is a two-parameter family $${(w_{s,t})_{(s,t)\in \Delta(I)}\subset\Hol(D,D)}$$ satisfying the following conditions:
\begin{itemize}
\item[EF1:] $w_{s,s}=\id_{D}$ for any~$s\in I$;

\item[EF2:] $w_{s,u}=w_{t,u}\circ w_{s,t}$ for any $s,t,u\in I$ with ${s\le t \le u}$;

\item[EF3:] for each $z\in D$ there exists a non-negative function $f_{z}\in \AC(I)$ such that
\[
|w_{s,u}(z)-w_{s,t}(z)|\le\int_t^u\!f_{z}(r)\,\di r
\]
for any $s,t,u\in I$ with ${s\le t \le u}$.
\end{itemize}
\end{definition}

\begin{definition}\label{DF_REF}
An \textsl{(absolutely continuous) reverse evolution family} in a simply connected domain $\DC$ over the interval~$I$ is a two-parameter family ${(v_{s,t})_{(s,t)\in \Delta(I)}\subset\Hol(D,D)}$ satisfying the following conditions:
\begin{itemize}
\item[REF1:] $v_{s,s}=\id_{D}$ for any~$s\in I$;

\item[REF2:] $v_{s,u}=v_{s,t}\circ v_{t,u}$ for any $s,t,u\in I$ with ${s\le t \le u}$;

\item[REF3:] for each $z\in D$ there exists a non-negative function $f_{z}\in \AC(I)$ such that
\[
|v_{s,u}(z)-v_{s,t}(z)|\le\int_t^u\!f_{z}(r)\,\di r
\]
for any $s,t,u\in I$ with ${s\le t \le u}$.
\end{itemize}
\end{definition}

If $(w_{s,t})$ satisfies Definition~\ref{DF_EF} with condition EF3 replaced by the requirement that the map
$
 {\Delta(I)\ni(s,t)\mapsto w_{s,t}\in\Hol(D,D)}
$
is continuous, then it is called a \textsl{topological evolution family}. In a similar way, \textsl{topological reverse evolution families} can be defined.

\begin{remark}
Clearly, if $(v_{t})\subset\Hol(D,D)$ is a one-parameter semigroup, then $(w_{s,t})_{0\le s\le t}$ defined by $w_{s,t}:=v_{t-s}$ for all $(s,t)\in\Delta$ is an absolutely continuous evolution family and, at the same time, an absolutely continuous reverse evolution family.
\end{remark}

\newcommand{\Tau}{S}
\begin{remark}\label{RM_REF-EF}
A family ${(v_{s,t})_{(s,t)\in \Delta(I)}\subset\Hol(D,D)}$ is a reverse evolution family if and only if for any $\Tau\in I$ the formula $w_{s,t}:=v_{\Tau-t,\Tau-s}$, $0\le s\le t\le \Tau$, defines an evolution family $(w_{s,t})$ over~${[0,\Tau]}$. The same is true with ``reverse evolution family'' and ``evolution family'' interchanged. For \textit{topological} (reverse) evolution families, these assertions follow directly from the definitions, while in the \textit{absolutely continuous} case, the proof is less trivial, see \cite[Proposition~4.3]{CDG_decr}.
\end{remark}

An observation similar to Remark~\ref{RM_semigr-hyperb-simply-conn} is valid for (reverse) evolution families, see~\cite{CD_top-LTh}, which explains why assuming that $\DC$ is simply connected is of no loss of generality.
At the same time, notwithstanding Remark~\ref{RM_REF-EF}, both Definitions~\ref{DF_EF} and~\ref{DF_REF} seem to be useful in our context. Reverse evolution families, as mentioned in Introduction, spring up in Probability Theory, e.g. in connection with branching processes.  On the other hand, usual evolution families are more natural from the Dynamics point of view. In particular, absolutely continuous evolution families, similarly to one-parameter semigroups, are semiflows of certain holomorphic vector fields in~$D$, but in this case the vector fields are \textit{non-autonomous}, i.e. they may depend explicitly on time. In order to state this result in a precise form, we need the following definition.

\newcommand{\Lloc}{L^1_{\mathrm{loc}}}
By $\Lloc(I)$ we denote the class of all measurable (real- or complex-valued) functions which are integrable on each compact subinterval of~$I$.

\begin{definition}
A function $\phi:D\times I\to\C$ is said to be a \textsl{Herglotz vector field} in a simply connected domain $\DC$ if it satisfies the following three conditions:
\begin{itemize}
\item[HVF1:] for any $t\ge0$, $\phi(\cdot,t)\in\Gen\big(\Hol(D,D)\big)$;
\item[HVF2:] for any $z\in D$, $\phi(z,\cdot)$ is measurable on~$I$;
\item[HVF3:] for any compact set $K\subset D$, there is a non-negative function ${M_K\in\Lloc(I)}$ such that
$$
\max_{z\in K}|\phi(z,t)|\le M_K(t)\quad\text{a.e.~$t\in I$}.
$$
\end{itemize}
\end{definition}
\begin{remark}\label{RM_conformal-trans-of-HVFs}
An observation analogous to Remark~\ref{RM_conformal-trans-of-generators} holds for Herglotz vector fields. Namely, if $f$ maps $\UD$ conformally onto a domain~$D$, then $\phi:D\times I\to\C$ is a Herglotz vector field in~$D$ if and only if $\psi(\zeta,t):={\phi\big(f(\zeta),t\big)/f'(\zeta)}$, ${\zeta\in\UD}$, ${t\in I}$, is a Herglotz vector field in~$\UD$.
\end{remark}
As usual we identify two Herglotz vector fields ${\phi,\psi:D\times I\to\C}$  if $\phi(\cdot,t)=\psi(\cdot,t)$ for a.e. ${t\in I}$. Some of the main results of~\cite{BCM1} can be now summarized and stated in our notation as follows. For $s\in I$, let $I_s:=I\cap[s,+\infty)$.
\begin{result}\label{TH_mainBCM1}
Let $\DC$ be a simply connected domain and $(w_{s,t})$ an absolutely continuous evolution family in~$D$ over the interval~$I$. Then there exists a unique Herglotz vector field ${\phi:D\times I\to\C}$ such that for any $z\in D$ and any $s\in [0,T)$, the map ${I_s\ni t\mapsto w(t):=w_{s,t}(z)\in D}$ is a solution to the initial value problem
\begin{equation}\label{EQ_ini-LK-ODE}
\frac{\di w}{\di t}+\phi(w,t)=0,\quad \text{a.e.\ } t\in I_s;\qquad w(s)=z.
\end{equation}

Conversely, let ${\phi:D\times I\to\C}$ be a Herglotz vector field. Then for any $z\in D$ and any $s\in I$ the initial value problem~\eqref{EQ_ini-LK-ODE} has a unique solution ${t\mapsto w=w(t;z,s)}$ defined on some interval containing the set~$I_s$. Setting ${w_{s,t}(z):=w(t;z,s)}$ for all ${z\in D}$ and all $(s,t)\in\Delta(I)$ one obtains an absolutely continuous evolution family $(w_{s,t})$ over the interval~$I$.
\end{result}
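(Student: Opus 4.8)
The plan is to obtain both assertions from the unit-disc results of~\cite{BCM1} by two routine reductions: transporting everything to~$\UD$ by a Riemann map, and passing to the sign convention of~\cite{BCM1}.

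First I would record the conformal transfer. Fix a conformal map $f\colon\UD\to D$ and put $\tilde w_{s,t}:=f^{-1}\circ w_{s,t}\circ f$. Conditions EF1 and EF2 are manifestly invariant; for EF3, given $\zeta\in\UD$ and $z:=f(\zeta)$, the curve $t\mapsto w_{s,t}(z)$ is continuous (by EF3 for $(w_{s,t})$) and therefore maps each compact subinterval of~$I_s$ into a compact subset of~$D$, on a neighbourhood of which $f^{-1}$ is Lipschitz; multiplying the majorant~$f_z$ by a locally bounded, piecewise-linear function that absorbs these local Lipschitz constants yields a majorant in~$\AC(I)$ for the increments of~$\tilde w_{s,t}(\zeta)$. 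Running the same argument with~$f$ in place of~$f^{-1}$ shows that $(w_{s,t})$ is an absolutely continuous evolution family in~$D$ if and only if $(\tilde w_{s,t})$ is one in~$\UD$. By Remark~\ref{RM_conformal-trans-of-HVFs}, $\phi\colon D\times I\to\C$ is a Herglotz vector field in~$D$ if and only if $\psi(\zeta,t):=\phi\big(f(\zeta),t\big)/f'(\zeta)$ is a Herglotz vector field in~$\UD$, and the substitution $w=f(\omega)$ --- the non-autonomous counterpart of the computation in Remark~\ref{RM_conformal-trans-of-generators} --- turns the Loewner\,--\,Kufarev equation~\eqref{EQ_ini-LK-ODE} for~$\phi$ into the one for~$\psi$, carrying the solution $t\mapsto w_{s,t}(z)$ to $t\mapsto\tilde w_{s,t}\big(f^{-1}(z)\big)$. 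This reduces the whole statement to the case $D=\UD$.

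Next comes the sign convention: our normalization~\eqref{EQ_generator}, and hence the sign in~\eqref{EQ_autonomous-LK} and~\eqref{EQ_ini-LK-ODE}, is opposite to the one in~\cite{BCM1}. Under the identification $\phi\leftrightarrow-\phi$ a Herglotz vector field in our sense becomes a Herglotz vector field in the sense of~\cite{BCM1}, equation~\eqref{EQ_ini-LK-ODE} becomes the Loewner\,--\,Kufarev equation of~\cite{BCM1}, and $\Gen\big(\Hol(\UD,\UD)\big)$ becomes the set of infinitesimal generators used there. With this dictionary the direct part of the statement --- including uniqueness of~$\phi$, which in any case follows from the fact that $\phi(\cdot,t)$ is pinned down for a.e.\ $t$ by the partial derivatives $\partial_t w_{s,t}$ --- is the theorem of~\cite{BCM1} associating a Herglotz vector field to an absolutely continuous evolution family on~$\UD$, and the converse part --- including the fact that the maximal solution $t\mapsto w(t;z,s)$ never leaves~$\UD$ on~$I_s$, so that its interval of definition contains~$I_s$ --- is the converse theorem there.

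The one point that is not purely formal is the matching of the regularity conditions. The definitions used here (EF3 with an $\AC(I)$ majorant, HVF3 with an $\Lloc(I)$ majorant) are, as already noted, mild modifications of those in~\cite{BCM1}, where evolution families and Herglotz vector fields are graded by an integrability order. One must check that the conformal transfer above respects the $\AC$ class of majorants --- this is the reason for the piecewise-linear absorbing factor above --- and that in the converse direction the evolution family produced from a Herglotz vector field satisfying HVF3 admits an $\AC(I)$ majorant for its increments; for the latter the natural bound is $|\partial_t w(t;z,s)|=|\phi(w(t;z,s),t)|\le M_K(t)$ with~$K$ a compact set containing the relevant trajectory, combined with an exhaustion of~$I$ by compact subintervals. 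I expect this reconciliation of function classes, rather than any new analysis, to be the main (indeed the only substantive) obstacle; with it in place, the statement is a direct transcription of~\cite{BCM1}.
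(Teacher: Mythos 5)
This statement is quoted verbatim from~\cite{BCM1} --- the paper offers no proof of it beyond observing that Definitions~\ref{DF_EF} and of a Herglotz vector field are slight modifications of those in~\cite{BCM1} and that the results transfer --- and your reduction (conformal transport to~$\UD$, reversal of the sign convention, reconciliation of the $\AC$/$\Lloc$ regularity classes with the $L^d$-graded ones of~\cite{BCM1}) is exactly the intended justification. The one detail worth making explicit in your transfer of~EF3 is that for fixed $z$ the set $\{w_{s,t}(z)\colon (s,t)\in\Delta([0,S])\}$ is relatively compact in~$D$ \emph{uniformly in~$s$} (a standard consequence of joint continuity of absolutely continuous evolution families), so that a single Lipschitz constant for $f^{-1}$ serves all base times at once rather than one for each fixed~$s$.
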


According to Theorem~\ref{TH_mainBCM1}, there is a one-to-one correspondence between absolutely continuous evolution families and Herglotz vector fields. An evolution family and the vector field, corresponding to this evolution family in the sense of Theorem~\ref{TH_mainBCM1}, are usually said to be \textsl{associated} to each other. An analogous one-to-one correspondence exists between absolutely continuous \textit{reverse} evolution families and Herglotz vector fields, see \cite[Theorem~4.2\,(i)]{CDG_decr}. In this respect, if $(v_{s,t})_{(s,t)\in\Delta(I)}$ is an absolutely continuous reverse evolution family and $\phi$ is the Herglotz vector field associated to~$(v_{s,t})$, then for each ${z\in D}$ and each ${t\in I\setminus\{0\}}$ the map ${[0,t]\ni s\mapsto v(s):=v_{s,t}(z)}$ is the unique solution to the initial value problem
\begin{equation}\label{EQ_ini-revLK-ODE}
\frac{\di v}{\di s}=\phi(v,s)\quad \text{for a.e.\ } s \in [0,t];\qquad v(t)=z.
\end{equation}
\begin{remark}
Although the correspondences between a Herglotz vector field~$\phi$ on the one side and the associated absolutely continuous (usual and reverse) evolution families $(w_{s,t})$ and $(v_{s,t})$ on the other side are expressed rather explicitly, there does not seem to exist any simple way to relate the families $(w_{s,t})$ and $(v_{s,t})$ directly, i.e. without involving the Herglotz vector field~$\phi$.
\end{remark}

\begin{remark}
Clearly, if $(v_{s,t})$ is an absolutely continuous reverse evolution family over the interval~$I$, then for each fixed $s\in[0,T)$, the mappings $\tilde v_{s'\!,t'}:=v_{s+s'\!,s+t'}$ also form an absolutely continuous reverse evolution family over a suitable interval. Therefore, combining \cite[Theorems 4.1, 4.2\,(ii) and 1.11]{CDG_decr}, one can easily conclude that there is an equivalent way to express the one-to-one correspondence between Herglotz vector fields and absolutely continuous reverse evolution families introduced above. Namely, as a function of the parameter $t$, any absolutely continuous reverse evolution family~$(v_{s,t})$ satisfies, in the sense of \cite[Definition~2.1]{CDG_decr},  the PDE
\begin{equation}\label{EQ_ini-revLK-PDE}
\frac{\partial v_{s,t}(z)}{\partial t}+\phi(z,t)\frac{\partial v_{s,t}(z)}{\partial z}=0,\quad \text{a.e.\ } t\in I_s,~z\in D;\qquad v_{s,s}=\id\nolimits_D,
\end{equation}
where $\phi$ is the Herglotz vector field associated with~$(v_{s,t})$. Conversely, given any Herglotz vector field $\phi:D\times I\to\Complex$, for each $s\in[0,T)$ the initial value problem~\eqref{EQ_ini-revLK-PDE} has a unique solution $(z,t)\mapsto v_{s,t}(z)$, which is defined for all ${(z,t)\in D\times I_s}$, and moreover, $(v_{s,t})_{(s,t)\in\Delta(I)}$ is exactly the reverse evolution family associated with~$\phi$.
\end{remark}
According to \cite[Theorem~1.11]{CDG_decr}, the inverse mappings $v_{s,t}^{-1}$ also can be recovered by solving an ODE driven by the Herglotz vector field~$\phi$ associated with~$(v_{s,t})$. Namely, for each ${s\in[0,T)}$ and each ${z\in D}$, the map $t\mapsto w(t):=v_{s,t}^{-1}(z)$ is the unique solution to the initial value problem
\begin{equation}\label{EQ_ini-invLK-ODE}
\frac{\di w}{\di t}=\phi(w,t)\quad \text{for a.e.\ } t\in I_s;\qquad w(s)=z.
\end{equation}
Note that in this case solutions do not have to be defined for all~$t\in I_s$. In fact, the set of all $z\in D$ for which the solution to~\eqref{EQ_ini-invLK-ODE} exists up to (and including) a given instant~$t$ coincides with~$v_{s,t}(D)$.

\begin{remark}
Equations~\eqref{EQ_ini-LK-ODE}\,--\,\eqref{EQ_ini-invLK-ODE} are essentially equivalent to each other and are known as \textsl{generalized Loewner\,--\,Kufarev} differential equations; see, e.g.\,\cite{Loewner-survey}. A good account on the \textit{classical} Loewner\,--\,Kufarev equations can be found in \cite[Chapter~6.1]{Pombook75}.
\end{remark}

\subsection{Boundary regular fixed points}\label{sec:BRFP}
For the classical results stated in this section, we refer the reader to~\cite[Sect.\,1.8, 1.9, 10.1, 12.1, and~12.2]{BCD-Book}, see also \cite[Chapter~1]{Abate:book}.
From the dynamics point of view, an important role in the study of self-maps is played by the fixed points. Thanks to the Schwarz\,--\,Pick Lemma, every holomorphic $w\colon \UD\to\UD$ different from~$\id_\UD$ can have at most one fixed point~$\tau\in\UD$. All other fixed points are located on~$\UC$ and should be understood in the sense of angular limits. Namely, $\sigma\in\UC$ is said to be a \textsl{boundary fixed point} for $w\in\Hol(\UD,\UD)$ if the angular limit $\anglim_{z\to\sigma}w(z)$ exists and coincides with~$\sigma$. It is known that the angular derivative
$$
 w'(\sigma):=\anglim_{z\to\sigma}\frac{w(z)-\sigma}{z-\sigma}
$$
exists, finite or infinite, at any boundary fixed point~$\sigma$ and that $w'(\sigma)$ is either~$\infty$ or a positive number, see e.g.~\cite[Proposition~4.13 on p.\,82]{Pommerenke:BB}.
\begin{definition}\label{DF_BRFP}
A boundary fixed point~$\sigma\in\UC$ of a holomorphic self-map ${w\colon \UD\to\UD}$ is said to be \textsl{regular} (\textsl{BRFP} for short) if $w'(\sigma)\neq\infty$.
\end{definition}

Using the Cayley map $\UD\ni z\mapsto H(z):=(1+z)/(1-z)\in\UH$, the above definitions can be extended to the holomorphic self-maps of the half-plane~$\UH$. In fact, for $\sigma\in\partial\UH$ different from~$\infty$ this leads to literally the same definitions. In case $\sigma=\infty$, the angular derivative $w'(\infty)$ should be understood in the sense of Carath\'eodory as defined in the following version of Julia's Lemma.
\begin{result}[\protect{see e.g. \cite[\S26]{Valiron:book}}]\label{TH_Julia-half-plane}
For any $f\in\Hol(\UH,\overline\UH)$, the limit
$$
f'(\infty):=\anglim_{\zeta\to\infty}\frac{f(\zeta)}{\zeta}
$$
exists finitely. Moreover,
$$
f'(\infty)=\inf_{\zeta\in\UH}\frac{\Re f(\zeta)}{\Re \zeta}\ge0.
$$
In particular, $$f(\zeta)=f'(\infty)\zeta+g(\zeta)\vphantom{\textstyle\int\limits_0^1}$$ for all ${\zeta\in\UH}$ and some $g\in\Hol(\UH,\overline\UH)$ satisfying $g'(\infty)=0$.
\end{result}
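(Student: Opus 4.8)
The plan is to derive everything from the classical Riesz--Herglotz representation of holomorphic functions with non-negative real part. If $f$ is constant, all three assertions are trivial, with $f'(\infty)=0$, so assume $f$ is non-constant; then by the open mapping theorem $f(\UH)\subset\UH$, i.e. $\Re f>0$ on $\UH$. Transporting the problem to the unit disk via the Cayley maps $H(z)=(1+z)/(1-z)$ and $H^{-1}(\zeta)=(\zeta-1)/(\zeta+1)$, I would set $P:=f\circ H\in\Hol(\UD,\UH)$. Since $\Re P>0$, there exist a real constant $c$ and a finite positive Borel measure $\mu$ on $\UC$ with $P(z)=ic+\int_{\UC}\tfrac{\xi+z}{\xi-z}\,d\mu(\xi)$ for every $z\in\UD$. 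The point $\infty\in\partial\UH$ corresponds under $H$ to $\xi=1\in\UC$, and the whole argument amounts to showing that the number $b:=\mu(\{1\})\ge0$ equals both $f'(\infty)$ and the claimed infimum.

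Two readings of this representation do the job. First, taking real parts yields the Poisson identity $\Re f(H(z))=\int_{\UC}\tfrac{1-|z|^2}{|\xi-z|^2}\,d\mu(\xi)$; since also $\Re H(z)=\tfrac{1-|z|^2}{|1-z|^2}$, one gets for $\zeta=H(z)\in\UH$
\[
\frac{\Re f(\zeta)}{\Re\zeta}=\int_{\UC}\frac{|1-z|^2}{|\xi-z|^2}\,d\mu(\xi)\ \ge\ \mu(\{1\})=b,
\]
because the integrand is non-negative and equals $1$ at $\xi=1$. Letting $z\to1^-$ radially (so that $\zeta=H(z)\to+\infty$ along the positive axis) and using $\tfrac{|1-z|^2}{|\xi-z|^2}\le1$ for such $z$, dominated convergence annihilates the contribution of $\UC\setminus\{1\}$, so the left-hand side tends to $b$; hence $\inf_{\UH}\Re f/\Re\zeta=b\ge0$. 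Second, since $1/\zeta=(1-z)/(1+z)$, I would isolate the atom at $\xi=1$ and write
\[
\frac{f(\zeta)}{\zeta}=ic\,\frac{1-z}{1+z}+\mu(\{1\})+\int_{\UC\setminus\{1\}}\frac{\xi+z}{\xi-z}\cdot\frac{1-z}{1+z}\,d\mu(\xi).
\]
Now let $\zeta\to\infty$ non-tangentially in $\UH$, equivalently $z\to1$ non-tangentially in $\UD$: the first term tends to $0$; in the integral the integrand tends pointwise to $0$ for each fixed $\xi\ne1$, while in a Stolz angle $\{|1-z|\le K(1-|z|)\}$ the elementary estimate $|\xi-z|\ge1-|z|\ge|1-z|/K$ bounds $\bigl|\tfrac{\xi+z}{\xi-z}\bigr|\,\bigl|\tfrac{1-z}{1+z}\bigr|$ by a constant depending only on $K$, so dominated convergence gives $\anglim_{\zeta\to\infty}f(\zeta)/\zeta=b$. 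Thus $f'(\infty)=b=\inf_{\UH}\Re f/\Re\zeta\ge0$.

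For the final ``in particular'' clause I would simply put $g(\zeta):=f(\zeta)-f'(\infty)\,\zeta$; then $\Re g(\zeta)=(\Re\zeta)\bigl(\tfrac{\Re f(\zeta)}{\Re\zeta}-b\bigr)\ge0$ by the infimum identity, so $g\in\Hol(\UH,\overline{\UH})$, and $g'(\infty)=f'(\infty)-b=0$ since the identity map has angular derivative $1$ at $\infty$. None of this is deep; the steps that actually require care are the apparently harmless ones — checking that the Cayley map carries Stolz regions at $\infty$ in $\UH$ to Stolz regions at $1$ in $\UD$ (so that the two notions of ``non-tangential'' match), securing the uniform domination in the dominated-convergence arguments, and the initial reduction ensuring $\Re f>0$ so that Riesz--Herglotz applies in the form used. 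One could instead quote the Julia--Wolff--Carath\'eodory theorem on the disk as a black box, but carrying out the Herglotz bookkeeping directly keeps the argument self-contained and hands us the infimum formula for free.
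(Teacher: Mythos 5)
Your proof is correct. Note that the paper does not prove this statement at all: it is quoted as a classical result (Julia's Lemma for the half-plane) with a citation to Valiron, so there is no internal argument to compare against. What you give is essentially the standard textbook derivation via the Riesz--Herglotz representation of $P:=f\circ H$ on $\UD$, with the angular derivative at $\infty$ and the infimum $\inf_{\zeta\in\UH}\Re f(\zeta)/\Re\zeta$ both identified with the mass $\mu(\{1\})$ of the representing measure at the preimage of $\infty$; all the individual steps (the reduction to $\Re f>0$, the Poisson-kernel lower bound, the two dominated-convergence passages, and the correspondence of Stolz regions under the Cayley map) check out, and the points you flag as needing care are exactly the right ones. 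The only stylistic remark is that the final clause $g'(\infty)=0$ follows most directly from $g(\zeta)/\zeta=f(\zeta)/\zeta-f'(\infty)\to0$, without invoking the angular derivative of the identity.
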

If $f\in\Hol(\UH,\UH)$ has a boundary fixed point at~$\infty$, then $w:=H^{-1}\circ f\circ H$ has a boundary fixed point at~$1$ and ${w'(1)=1/f'(\infty)}$. Therefore, $\infty$ is a BRFP for $f\in\Hol(\UD,\UD)$ if and only if~$f'(\infty)\neq0$.

\begin{remark}
It follows easily from \cite[Theorem~1]{CoDiPo06} that $0$ is a BRFP for $w\in\Hol(\UH,\UH)$ if and only if ${w(x)\to0}$ as ${\R\ni x\to0^+}$ and the limit~$\lim_{x\to 0^+}w'(x)$ exists finitely. Similarly, $\infty$ is a BRFP for $w\in\Hol(\UH,\UH)$ if and only if $\lim_{x\to +\infty}w'(x)$ is different from zero.
\end{remark}

If $w\in\Hol(D,D)$, where $D\in\{\UD,\UH\}$, has no fixed points in~$D$, then according to the classical Denjoy\,--\,Wolff Theorem, there exists a unique BRFP ${\tau\in\partial D}$ of~$w$ such that ${w'(\tau)\le 1}$ if ${\tau\neq\infty}$ and ${w'(\tau)\ge1}$ if ${\tau=\infty}$. Moreover, as ${n\to+\infty}$, the iterates
\begin{equation}\label{EQ_ite-to-tau}
 w^{\circ n}:=\underbrace{w\circ w\circ\ldots\circ w}_{n \text{~times}}~\longrightarrow~\tau
\end{equation}
locally uniformly in~$D$. The point $\tau$ is called the \textsl{Denjoy\,--\,Wolff point} of~$w$ or the \textsl{DW-point} for short. The limit behaviour of holomorphic self-maps with interior fixed points is similar. Namely, if $\tau\in D$ is a fixed point of $w\in\Hol(D,D)$ and if $w$ is not an automorphism of~$D$, then~\eqref{EQ_ite-to-tau} holds. In this case we also refer to~$\tau$ as the \textsl{Denjoy\,--\,Wolff point} of~$w$.

Now if $(v_t)\subset\Hol(D,D)$, $D\in\{\UD,\UH\}$, is a non-trivial one-parameter semigroup, then $v_t$'s different from~$\id_D$ share the same BRFPs and in particular, the same DW-point. According to a well-known result of Berkson and Porta~\cite{BP78}, see also \cite[Theorem~10.1.10]{BCD-Book} for another approach, in case $D=\UD$ the infinitesimal generator $\phi$ of a one-parameter semigroup $(v_t)$ can be written as
\begin{equation}\label{EQ_BP}
\phi(z)=(z-\tau)(1-\overline\tau z)p(z),\quad{z\in\UD},
\end{equation}
where $\tau$ is the DW-point of~$(v_t)$ and $p$ is a suitable (uniquely defined) holomorphic function in~$\UD$ with ${\Re p\ge0}$. Conversely, if $\phi\in\Hol(\UD)$ is given by~\eqref{EQ_BP} with some $\tau\in\overline\UD$ and some $p\in\Hol(\UD,\overline\UH)$, then $\phi$ is the infinitesimal generator of a one-parameter semigroup with the DW-point at~$\tau$.

\begin{remark}\label{RM_BRFP-semigroups}
It is known~\cite[Theorem~1]{CoDiPo06} that $\sigma\in\partial\UD$ is a BRFP of a one-parameter semigroup $(v_t)\subset\Hol(\UD,\UD)$ if and only if the limit
\begin{equation}\label{EQ_anglim-BRCP}
 \lambda:=\anglim_{z\to\sigma}\frac{\phi(z)}{z-\sigma}
\end{equation}
is finite, and in such a case ${v_t'(\sigma)=e^{-\lambda t}}$ for
all~${t\ge0}$. In particular, ${\lambda\in\Real}$.
\end{remark}
\begin{remark}\label{RM_BRFP-radial}
It is worth mentioning that the angular limit~\eqref{EQ_anglim-BRCP} exists, \textit{finite or infinite}, for any infinitesimal generator~$\phi$ in the unit disk~$\UD$, see e.g.~\cite[p.\,330]{BCD-Book}. Therefore, \eqref{EQ_anglim-BRCP}~can be replaced with the corresponding radial limit $\lim_{r\to1^-}\overline\sigma\phi(\sigma r)/(r-1)$.
\end{remark}
\begin{remark}\label{RM_critical-via-radial-limits}
It is also known, see e.g. \cite[Proposition~12.2.4]{BCD-Book}, that the limit~\eqref{EQ_anglim-BRCP} is finite if and only if $\lim_{r\to 1^-}\phi(r\sigma)=0$ and the limit $\phi'(\sigma):=\lim_{r\to 1^-}\phi'(r\sigma)$ exists finitely. In such a case ${\lambda=\phi'(\sigma)}$.
\end{remark}
For the non-autonomous case the following extension of~\cite[Theorem~1]{CoDiPo06} holds.
\begin{result}[\protect{\cite[Theorem 1.1]{BRFP2015}}] \label{thm:boundary_derivative}
 Let $(w_{s,t})_{(s,t)\in\Delta(I)}\subset\Hol(\UD,\UD)$ be an absolutely continuous evolution family with associated Herglotz vector field ${\phi\colon \UD\times I\to\Complex}$. Let ${\sigma\in\UC}$. Then the following two conditions are equivalent.
 \begin{itemize}
 \item[\rm (A)] For every $(s,t)\in\Delta(I)$, $\sigma$ is a BRFP of $w_{s,t}$.\vskip.7ex
 \item[\rm (B)] The limit $\lambda(t):=\anglim_{z\to\sigma}\phi(z,t)/(z-\sigma)$ is finite for a.e. $t\ge0$,\\  and $\lambda\in\Lloc(I)$.
 \end{itemize}
If the above conditions are satisfied, then for all $(s,t)\in\Delta(I)$,
$$
\log w'_{s,t}(\sigma)=-\int_s^t\lambda(u)\di u.
$$
\end{result}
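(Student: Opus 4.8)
The strategy is to move the problem to the right half-plane, where $\sigma$ becomes the point $\infty$ and the relevant quantities turn into real parts, and to analyse the one-parameter semigroups obtained by ``freezing'' the Herglotz vector field at each instant.

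\smallskip
\textit{Step 1 (reduction and the frozen Julia inequality).} Let $g$ be a Cayley map of $\UD$ onto $\UH$ with $g(\sigma)=\infty$. By Remark~\ref{RM_conformal-trans-of-HVFs} and the way the Loewner\,--\,Kufarev equation transforms under conformal changes of variable, $\tilde w_{s,t}:=g\circ w_{s,t}\circ g^{-1}$ is an absolutely continuous evolution family in $\UH$, associated with the Herglotz vector field $\tilde\phi$ obtained from $\phi$; a direct computation with $g$ gives $\anglim_{z\to\sigma}\phi(z,t)/(z-\sigma)=-\mu(t)$, where $\mu(t):=\anglim_{\zeta\to\infty}\tilde\phi(\zeta,t)/\zeta$ (this limit exists, finite or not, by the half-plane analogue of Remark~\ref{RM_BRFP-radial}), and $w_{s,t}'(\sigma)=1/\tilde w_{s,t}'(\infty)$, where by Theorem~\ref{TH_Julia-half-plane} $\tilde w_{s,t}'(\infty)=\anglim_{\zeta\to\infty}\tilde w_{s,t}(\zeta)/\zeta=\inf_{\zeta\in\UH}\Re\tilde w_{s,t}(\zeta)/\Re\zeta\ge0$. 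Since $\infty$ is a BRFP of a self-map of $\UH$ exactly when this infimum is positive, (A) becomes ``$\tilde w_{s,t}'(\infty)>0$ for all $(s,t)$'' and the identity to be proved becomes $\log\tilde w_{s,t}'(\infty)=-\int_s^t\mu(u)\DI u$. The key local fact is that, for a.e.\ $t$, $\mu(t)$ is finite if and only if $\Re\tilde\phi(\zeta,t)\le c\,\Re\zeta$ for all $\zeta\in\UH$ and some $c\in\R$, in which case $\mu(t)$ equals the least such $c$. One direction is immediate, since then $c\zeta-\tilde\phi(\zeta,t)\in\Hol(\UH,\overline\UH)$ and Theorem~\ref{TH_Julia-half-plane} applies; for the other, differentiating at $h=0^{+}$ the Julia estimate $\Re v_h(\zeta)\ge e^{-\mu(t)h}\Re\zeta$ for the one-parameter semigroup $(v_h)$ generated by $\tilde\phi(\cdot,t)$ (via the half-plane version of Remark~\ref{RM_BRFP-semigroups}) gives $\Re\tilde\phi(\zeta,t)\le\mu(t)\Re\zeta$. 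Finally, since $\anglim_{\zeta\to\infty}(\tilde\phi(\zeta,t)-\mu(t)\zeta)/\zeta=0$, one has $\Re\tilde\phi(\zeta,t)/\Re\zeta\to\mu(t)$ as $\zeta\to\infty$ inside any fixed Stolz angle at $\infty$.

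\smallskip
\textit{Step 2 ($(B)\Rightarrow(A)$ and the inequality ``$\le$'').} Assume (B), so $\mu=-\lambda$ is finite a.e.\ and $\mu\in\Lloc(I)$. Fix $\zeta\in\UH$ and $s$. By Theorem~\ref{TH_mainBCM1}, $r\mapsto\tilde w_{s,r}(\zeta)$ solves $\frac{\di}{\di r}\omega=-\tilde\phi(\omega,r)$ for a.e.\ $r$, so $u(r):=\Re\tilde w_{s,r}(\zeta)>0$ is locally absolutely continuous with $\frac{\di}{\di r}u(r)=-\Re\tilde\phi(\tilde w_{s,r}(\zeta),r)\ge-\mu(r)u(r)$ a.e., by Step~1. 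Hence $\frac{\di}{\di r}\log u(r)\ge-\mu(r)$, and integration over $[s,t]$ gives $\Re\tilde w_{s,t}(\zeta)/\Re\zeta\ge\exp(-\int_s^t\mu)$. Taking the infimum over $\zeta\in\UH$ yields $\tilde w_{s,t}'(\infty)\ge\exp(-\int_s^t\mu)>0$, so (A) holds and $\log\tilde w_{s,t}'(\infty)\ge-\int_s^t\mu$, i.e.\ $\log w_{s,t}'(\sigma)\le-\int_s^t\lambda$.

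\smallskip
\textit{Step 3 ($(A)\Rightarrow(B)$ and the reverse inequality).} Assume (A). One first shows $\mu\in\Lloc(I)$ (in particular $\mu(t)<\infty$ a.e.). Heuristically, if $\mu$ were not finite on a set of positive measure, or if its positive part were not locally integrable, the frozen dynamics at those instants would already move $\infty$ towards a finite point, and accumulating this over a positive-measure set of times would force $\Re\tilde w_{s,t}(\zeta)$ to stay bounded as $\zeta\to\infty$ for suitable $(s,t)$, giving $\tilde w_{s,t}'(\infty)=0$ and contradicting (A); when $\sigma$ is ``attracting'' ($\mu\le0$ a.e.) the integrability of $\mu$ follows at once from the bound of Step~2 together with $\tilde w_{s,t}'(\infty)<\infty$, and the ``repelling'' case can be reduced to it by passing to the inverses of the (univalent) maps $w_{s,t}$, for which $\sigma$ becomes attracting. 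Once $\mu\in\Lloc(I)$ is known, the exact formula is recovered by letting the argument go to $\infty$ along the positive reals: by Theorem~\ref{TH_Julia-half-plane}, $\tilde w_{s,t}'(\infty)=\lim_{x\to+\infty}\Re\tilde w_{s,t}(x)/x$; for fixed $x>0$, $t\mapsto\log\Re\tilde w_{s,t}(x)$ is locally absolutely continuous with derivative $-\Re\tilde\phi(\tilde w_{s,t}(x),t)/\Re\tilde w_{s,t}(x)$, and under (A) one has $\tilde w_{s,t}(x)/x\to\tilde w_{s,t}'(\infty)>0$, so $\tilde w_{s,t}(x)\to\infty$ inside a fixed Stolz angle and, by Step~1, this derivative converges to $\mu(t)$ as $x\to+\infty$ for a.e.\ $t$. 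Passing to the limit $x\to+\infty$ in $\log\big(\Re\tilde w_{s,t}(x)/x\big)=-\int_s^t\big(\Re\tilde\phi(\tilde w_{s,\tau}(x),\tau)/\Re\tilde w_{s,\tau}(x)\big)\DI\tau$ under the integral sign yields $\log\tilde w_{s,t}'(\infty)=-\int_s^t\mu$, i.e.\ $\log w_{s,t}'(\sigma)=-\int_s^t\lambda$; and then $\lambda=-\mu$ is finite a.e.\ with $\lambda\in\Lloc(I)$, which is (B).

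\smallskip
\textit{The main obstacle.} Steps~1 and~2 are essentially computations; the difficulty is Step~3. Turning the ``crushing'' picture into a rigorous proof that $\mu\in\Lloc(I)$ under (A), and then justifying the interchange of $\lim_{x\to+\infty}$ with the integral over $\tau$, both require a bound on $\Re\tilde\phi(\tilde w_{s,\tau}(x),\tau)/\Re\tilde w_{s,\tau}(x)$ that is integrable in $\tau$ and uniform in $x$. The Herglotz condition HVF3 is of no help here, as it degenerates when the argument approaches the boundary fixed point; instead one must use the Berkson\,--\,Porta / Herglotz representation of the generators $\phi(\cdot,\tau)$, which controls their growth near $\sigma$, together with the Julia\,--\,Wolff\,--\,Carath\'eodory estimates for the finite angular derivatives $w_{\tau,t}'(\sigma)$ furnished by (A).
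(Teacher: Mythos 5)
This statement is quoted in the paper from \cite[Theorem~1.1]{BRFP2015} without proof, so I am judging your argument against the known proof of that result. Your Steps~1 and~2 are correct and complete: the transfer to $\UH$ sending $\sigma$ to $\infty$, the identification of $\mu(t)=-\lambda(t)$ as the least constant $c$ with $\Re\tilde\phi(\zeta,t)\le c\,\Re\zeta$, and the Gronwall argument along trajectories yielding $\Re\tilde w_{s,t}(\zeta)\ge e^{-\int_s^t\mu(u)\,\di u}\,\Re\zeta$ do establish (B)$\Rightarrow$(A) together with the inequality $\log w_{s,t}'(\sigma)\le-\int_s^t\lambda(u)\,\di u$. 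This is essentially how the easy half is handled in the literature.

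Step~3, however, is where the substance of the theorem lies, and there you have a genuine gap which you yourself flag. Two concrete problems. First, the assertion that (A) forces $\mu(t)<+\infty$ a.e.\ and $\mu\in\Lloc(I)$ is only a heuristic, and the shortcut offered for the ``attracting'' case is circular as written: the bound of Step~2 was derived \emph{under} hypothesis (B), i.e.\ assuming the very finiteness and local integrability of $\mu$ that are to be proved. What actually has to be shown is that for a.e.\ $s$ the frozen generator $\phi(\cdot,s)$, obtained as the limit of $(\id_{\UD}-w_{s,t})/(t-s)$, inherits a \emph{regular} fixed point at $\sigma$; this does not follow from the chain rule for angular derivatives, since the derivatives $w_{s,s+h/n}'(\sigma)$ could a priori degenerate in the Trotter-type limit producing the frozen semigroup. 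Second, the interchange of $\lim_{x\to+\infty}$ with $\int_s^t(\cdot)\,\di\tau$ needs an upper bound on $-\Re\tilde\phi(\tilde w_{s,\tau}(x),\tau)/\Re\tilde w_{s,\tau}(x)$ that is integrable in $\tau$ and uniform in $x$; Step~1 supplies only the one-sided bound $\ge-\mu(\tau)$, which via Fatou reproduces the inequality of Step~2 but not its reverse. The proof in \cite{BRFP2015} circumvents both issues by a different route: it shows directly that $t\mapsto\log w_{0,t}'(\sigma)$ is locally absolutely continuous, using uniform Julia--Wolff--Carath\'eodory-type estimates that compare $|w_{s,t}(z)-z|$ with $|w_{s,t}'(\sigma)-1|$ near~$\sigma$, and then identifies its a.e.\ derivative with $-\lambda(t)$. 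Until you either supply such estimates or produce the uniform integrable majorant your own route requires, the direction (A)$\Rightarrow$(B) and the exact product formula remain unproven.
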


\begin{remark}\label{RM_BP-in-H}
Remark~\ref{RM_conformal-trans-of-generators} allows one to extend the Berkson\,--\,Porta representation formula~\eqref{EQ_BP} to infinitesimal generators in~$\UH$. Namely, $\phi\colon \UH\to\C$ is an infinitesimal generator of a one-parameter semigroup in $\Hol(\UH,\UH)$ with the DW-point at~${\tau\in\overline{\UH}}$ if and only if
\begin{subequations}\label{EQ_Berkson-Porta-UH}
 \begin{align}
 \label{EQ_BP_H-finite}
  \phi(\zeta)&=(\zeta-\tau)(\zeta+\overline\tau)P(\zeta) && \text{if~$~\tau\in\UH\cup i\,\Real$},\\
 \label{EQ_BP_H-infty}
  \phi(\zeta)&=-P(\zeta) && \text{if~$~\tau=\infty$},
 \end{align}
\end{subequations}
for all $\zeta\in\UH$, where $P$ is a holomorphic function in~$\UH$ with~$\Re P\ge0$.
\end{remark}

\subsection{Bernstein functions}\label{SS_Bernstein} We start with the following classical definition.
\begin{definition}
By a \textsl{Bernstein function} we mean a non-negative function $f$ of class $C^{\infty}$ in $(0,+\infty)$ such that $(-1)^{n+1}f^{(n)}(x)\ge0$ for all $x>0$ and all~$n\in\Natural$.
\end{definition}

In the following theorem we collect some important basic facts, which can be found together with proofs in literature on Bernstein functions, in particular, in the monograph~\cite[Chapter~3]{SSV12}.
\begin{result}\label{TH_properties-of-BF}
The following statements hold.
\begin{itemize}
\item[\rm (A)] $f\colon (0,+\infty)\to\Real$ is a Bernstein function if and only if it admits the following representation:
    \begin{equation}\label{EQ_LKh-formula}
    f(x)=\alpha + \beta x~ +\! \int\limits_{(0,+\infty)}\! (1-e^{-\lambda x}) \,\rho(\di\lambda)  \qquad \,\text{for all~$~x>0$,}
    \end{equation}
    where $\alpha,\beta \ge0$ and $\rho$ is a non-negative Borel measure on $(0,\infty)$ satisfying $\int_0^{+\infty}\min\{\lambda,1\}\,\rho(\di\lambda)<+\infty$.\medskip
\item[\rm (B)] Every Bernstein function $f$ has a (unique) holomorphic extension ${f_*\colon \UH\to\Complex}$ with ${\Re f_*\ge0}$. Moreover, $f_*$ extends continuously to $i\,\Real$ and representation~\eqref{EQ_LKh-formula} still holds if $\,f$ is replaced by $f_*$ and ${x>0}$ by an arbitrary $z\in\UH$.\medskip
\item[\rm (C)] For the coefficients $\alpha$ and $\beta$ in representation~\eqref{EQ_LKh-formula} we have
    $$\alpha=\lim_{x\to0^+}f(x)=\lim_{\UH\ni z\to 0}f_*(z)\quad\text{and}\quad \beta=\lim_{x\to+\infty}\frac{f(x)}{x}=f_*'(\infty).$$
\item[\rm (D)] If $f\not\equiv0$ and $g$ are Bernstein functions, then $\,g\circ f$ is also a Bernstein function and trivially $(g\circ f)_*=g_*\circ f_*$.\medskip
\item[\rm (E)] Bernstein functions form a convex cone; namely, for any two Bernstein functions $f_1$, $f_2$ and any constant $a,b\ge0$, $af_1+bf_2$ is again a Bernstein function.\medskip
\item[\rm (F)] If a sequence of Bernstein functions converges pointwise in~$(0,+\infty)$ to a function~$f$, then $f$ is  a Bernstein function.
\end{itemize}
\end{result}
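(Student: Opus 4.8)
The plan is to deduce all of (A)--(F) from a single classical input, the \emph{Hausdorff\,--\,Bernstein\,--\,Widder theorem}: a function $g\in C^{\infty}\big((0,+\infty)\big)$ is \emph{completely monotone}, i.e.\ satisfies $(-1)^n g^{(n)}\ge0$ for all $n\ge0$, if and only if $g(x)=\int_{[0,+\infty)}e^{-sx}\,\mu(\di s)$ for a unique non-negative Borel measure~$\mu$. The other ingredient I would use is the elementary observation that the inequalities defining a Bernstein function~$f$ say exactly that $f\ge0$ and $f'$ is completely monotone. (Everything below is worked out in detail in \cite[Chapter~3]{SSV12}; I only describe the structure.)

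For~(A) I would argue both implications from this observation. If \eqref{EQ_LKh-formula} holds, then differentiating under the integral sign gives $f'(x)=\beta+\int_{(0,+\infty)}\lambda e^{-\lambda x}\,\rho(\di\lambda)$, which is manifestly completely monotone, while $f\ge0$ is clear. Conversely, I would apply Hausdorff\,--\,Bernstein\,--\,Widder to $f'$, write the resulting measure as $\mu=\beta\,\delta_0+\lambda\,\rho(\di\lambda)$ on $(0,+\infty)$ with $\beta:=\mu(\{0\})$ and $\rho(\di\lambda):=\lambda^{-1}\mu(\di\lambda)$, integrate $f'$ from~$1$ to~$x$, use Tonelli, and rewrite $e^{-\lambda}-e^{-\lambda x}=(1-e^{-\lambda x})-(1-e^{-\lambda})$ to reach \eqref{EQ_LKh-formula} with $\alpha:=f(1)-\beta-\int_{(0,+\infty)}(1-e^{-\lambda})\,\rho(\di\lambda)$. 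The delicate point will be $\int_0^{+\infty}\min\{\lambda,1\}\,\rho(\di\lambda)<+\infty$: near $0$ I would compare $\lambda$ with $e^{-\lambda}-e^{-2\lambda}$ and use $f(2)-f(1)<+\infty$; near $+\infty$ I would use $\int_1^{+\infty}\lambda^{-1}\mu(\di\lambda)\le(1-e^{-1})^{-1}\int_0^1 f'(x)\,\di x=(1-e^{-1})^{-1}\big(f(1)-f(0^+)\big)<+\infty$, observing that $f(0^+)$ is finite because $f$ is non-decreasing and non-negative. Letting $x\to0^+$ in~\eqref{EQ_LKh-formula} then also gives $\alpha=f(0^+)\ge0$. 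For~(B), given~(A), I would simply \emph{define} $f_*$ on $\overline{\UH}$ by the same formula: the bound $|1-e^{-\lambda z}|\le\min\{2,\lambda|z|\}$ for $\Re z\ge0$ (integrate $\lambda e^{-\lambda w}$ along the segment $[0,z]$) makes the integral converge absolutely and locally uniformly on $\overline{\UH}$, so $f_*$ is continuous there and, by Morera's theorem, holomorphic on $\UH$; $\Re f_*\ge0$ holds termwise since $\Re(1-e^{-\lambda z})=1-e^{-\lambda\Re z}\cos(\lambda\Im z)\ge0$; uniqueness of the extension is the identity theorem. For~(C) I would pass to the limit in~\eqref{EQ_LKh-formula} by dominated convergence (dominating by a multiple of $\min\{\lambda,1\}\rho$): $z\to0$ yields $\alpha=f(0^+)$, and $x^{-1}(1-e^{-\lambda x})\le\min\{\lambda,1\}$ with $x\to+\infty$ yields $\beta=\lim f(x)/x$; the same estimate inside a Stolz angle at $\infty$ (splitting the integral at a small $\delta>0$) identifies this limit with $f_*'(\infty)$, or one may invoke Theorem~\ref{TH_Julia-half-plane}.

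For~(D) I would avoid the analytic machinery and argue combinatorially: Fa\`a di Bruno expresses $(g\circ f)^{(n)}$ as a sum, over integer solutions of $\sum_j jm_j=n$, of positive multiples of $g^{(k)}(f)\prod_j\big(f^{(j)}\big)^{m_j}$ with $k=\sum_j m_j$; since $g^{(k)}$ has sign $(-1)^{k+1}$ and $f^{(j)}$ has sign $(-1)^{j+1}$, each summand has sign $(-1)^{k+1}(-1)^{\sum_j(j+1)m_j}=(-1)^{k+1}(-1)^{n+k}=(-1)^{n+1}$, hence $(-1)^{n+1}(g\circ f)^{(n)}\ge0$; with $g\circ f\ge0$ this gives that $g\circ f$ is Bernstein. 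For the identity $(g\circ f)_*=g_*\circ f_*$ I would check that $f_*(\UH)\subseteq\UH$ when $f\not\equiv0$ (either $f_*$ is non-constant and $\Re f_*>0$ by the minimum principle for the harmonic function $\Re f_*$, or $f_*\equiv\alpha$ with $\alpha>0$), after which both sides are holomorphic on $\UH$ and agree on $(0,+\infty)$. Part~(E) is immediate from linearity of the operator $(-1)^{n+1}(\cdot)^{(n)}$, or from adding two copies of~\eqref{EQ_LKh-formula}. For~(F) I would reuse the sign-count: it shows $e^{-tf_n}$ is completely monotone for every $t\ge0$ and every $n$; since completely monotone functions extend holomorphically and locally boundedly to $\{\Re z>0\}$ (Hausdorff\,--\,Bernstein\,--\,Widder once more), the pointwise convergence $e^{-tf_n}\to e^{-tf}$ improves to locally uniform convergence there by normality, so all derivatives converge and $e^{-tf}$ is completely monotone for every $t\ge0$; the classical converse --- $\psi$ is a Bernstein function whenever $e^{-t\psi}$ is completely monotone for all $t>0$ --- then finishes. (A self-contained alternative for~(F) is a tightness/vague-convergence argument on the L\'evy triples $(\alpha_n,\beta_n,\rho_n)$ of~(A), exactly as in the continuity theorem for infinitely divisible laws.)

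I do not expect any single genuinely hard step. Modulo Hausdorff\,--\,Bernstein\,--\,Widder (and, for~(F), the converse implication for convolution semigroups, both of which one is entitled to quote from \cite{SSV12}), parts (A)--(C) are bookkeeping with dominated convergence, (D) and (E) are a sign-count and a one-liner, and (F) is the sign-count plus a normal-families argument. The only computation really requiring care is the integrability of the L\'evy measure~$\rho$ at~$+\infty$ in~(A); that is the step I would plan extra time for.
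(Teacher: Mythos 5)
The paper offers no proof of this statement: it is presented as a compilation of classical facts, with an explicit pointer to \cite[Chapter~3]{SSV12} for the proofs. Your reconstruction is correct and follows the standard arguments of that reference --- Hausdorff\,--\,Bernstein\,--\,Widder plus dominated convergence for (A)--(C), Fa\`a di Bruno sign counts for (D)--(E), and the characterization of Bernstein functions via complete monotonicity of $e^{-tf}$ together with a normal-families argument for (F) --- and you correctly isolate and handle the one delicate step, the integrability of the L\'evy measure $\rho$ at infinity in~(A).
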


In what follows we will not make any notational distinction between a Bernstein function~$f$ and its holomorphic extension~$f_*$.  The class of all $v\in\Hol(\UH,\C)$ such that $v|_{(0,+\infty)}$ is a Bernstein function and not identically zero will be denoted by $\BF$. According to assertions (B), (D) and~(F) in the above theorem, $\BF$ is a topologically closed semigroup in $\Hol(\UH,\UH)$. Taking into account the uniqueness of holomorphic extension, we will refer to elements of~$\BF$ as Bernstein functions as well.

In what follows we will need another definition, closely related to the notion of a Bernstein function.
\begin{definition}
  Let $I\subset\Real$ be an open interval.
  A function $\varphi\colon I\to\Real$ is said to be \textsl{completely monotone} in~$I$, if it is of class $C^\infty$ and ${(-1)^k\varphi^{(k)}(x)\ge0}$ for all $x\in I$ and all $k=0,1,2,\ldots$

  Similarly, a function $\psi\colon I\to\Real$ is said to be \textsl{absolutely monotone} in~$I$ if it is of class $C^\infty$ and ${\psi^{(k)}(x)\ge0}$ for all $x\in I$ and all $k=0,1,2,\ldots$
\end{definition}
\begin{remark}
By the very definition, a non-negative differentiable function $f$ on $(0,+\infty)$ is a Bernstein function if and only if $\psi:=f'$ is completely monotone on~$(0,+\infty)$.  Moreover, obviously $\varphi$ is a completely monotone in an open interval~$I$ if and only if $\psi(x):=\varphi(-x)$ is absolutely monotone in the interval $J:=\{{-x}\colon {x\in I}\}$. Another simple observation is that according to Leibniz's rule for higher order derivatives, the product of two absolutely monotone functions is again absolutely monotone.
\end{remark}
\begin{remark}\label{RM_Faa-di-Bruno}
In a similar way, it follows immediately from Fa\`{a} di Bruno's formula for the derivatives of a composite function that if $g\colon I\to J$, where $I$ and $J$ are two open intervals in~$\R$, is such that $g'$ is absolutely monotone in~$I$, and if $\psi$ is absolutely monotone in~$J$, then the composition $\psi\circ g$ is absolutely monotone in~$I$.
\end{remark}
\begin{remark}\label{RM_composition}
The above remarks imply a slight generalization of assertion~(D) in Theorem~\ref{TH_properties-of-BF}. Namely, if $f_1$ is a Bernstein function and if $f_2$ is a non-negative differentiable function such that $f_2'$ is completely monotone in an open interval~$J$ containing~$f_1\big((0,+\infty)\big)$, then $f_2\circ f_1$ is a Bernstein function: simply apply Remark~\ref{RM_Faa-di-Bruno} with $g:=g_1$, $I:=(-\infty,0)$, and  $\psi:=g_2'$, where $g_j(x):=-f_j(-x)$, $j:=1,2$.
\end{remark}

%
%
\section{Results}\label{S_results}
%
%
\subsection{Infinitesimal generators in topologically closed semigroups}\label{SS_one-param-in-semigroups}
It is well-known that infinitesimal generators of one-parameter semigroups in $\Hol(D,D)$ form a closed convex cone, see e.g.~\cite[Corollary~10.2.7, p.\,287]{BCD-Book}. For various choices of a semigroup $\U\subsetneq\Hol(D,D)$, the cone $\Gen(\U)$ formed of all infinitesimal generators of one-parameter semigroups contained in~$\U$ has been explicitly characterized, see e.g.\,\cite{Goryainov-survey}. In each case we are aware of, the cone $\Gen(\U)$ turns out to be convex. However, we have not been able to find in the literature any general results in this respect.
By this reason, below we give a proof of the following basic theorem.

\begin{theorem}\label{TH_generators-convex-cone}
Let $\DC$ be a simply connected domain and $\,\U$ a topologically closed semigroup of holomorphic self-maps of~$D$.
Then the set $\Gen(\U)$ formed by all infinitesimal generators of one-parameter semigroups contained in~$\U$ is a topologically closed (real) convex cone in~$\Hol(D,\C)$.
\end{theorem}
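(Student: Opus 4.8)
The plan is to establish the three defining features of a topologically closed real convex cone — invariance under multiplication by non‑negative reals, invariance under addition, and topological closedness — one at a time, taking for granted the classical fact that $\Gen=\Gen\big(\Hol(D,D)\big)$ is itself a topologically closed real convex cone (see \cite[Corollary~10.2.7, p.\,287]{BCD-Book}). By Remark~\ref{RM_conformal-trans-of-generators} it is harmless to work with a general simply connected $\DC$; I will use only that then $\C\setminus D$ is infinite, so that $\Hol(D,D)$ is a normal family by Montel's criterion. Note first that $0\in\Gen(\U)$, since the zero vector field generates the trivial semigroup $v_t\equiv\id_D$ and $\id_D\in\U$ by definition.

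\textbf{Cone property.} If $\phi\in\Gen(\U)$ with associated one-parameter semigroup $(v^\phi_t)_{t\ge0}\subset\U$ and $c>0$, then the substitution $s=ct$ in~\eqref{EQ_autonomous-LK} shows that $t\mapsto v^\phi_{ct}$ is the one-parameter semigroup generated by $c\phi$, and it lies in $\U$ because $\{v^\phi_{ct}:t\ge0\}\subseteq\{v^\phi_s:s\ge0\}$; hence $c\phi\in\Gen(\U)$. \textbf{Topological closedness.} Given $\Gen(\U)\ni\phi_n\to\phi$ in $\Hol(D,\C)$, closedness of $\Gen$ yields $\phi\in\Gen$, so $\phi$ generates some $(v^\phi_t)$ in $\Hol(D,D)$. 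Fixing $z\in D$ and $t\ge0$, the exact trajectory $s\mapsto v^\phi_s(z)$, $s\in[0,t]$, has compact image $\Gamma\subset D$; on a compact neighbourhood of $\Gamma$ the $\phi_n$ are uniformly Lipschitz (Cauchy estimates) and converge uniformly to $\phi$, and since the $\phi_n$-trajectories through $z$ are globally defined, a standard Gronwall/bootstrapping argument gives $\sup_{s\in[0,t]}|v^{\phi_n}_s(z)-v^\phi_s(z)|\to0$. Pointwise convergence $v^{\phi_n}_t\to v^\phi_t$ on $D$ together with normality of $(v^{\phi_n}_t)_n$ forces convergence in $\Hol(D,D)$; as each $v^{\phi_n}_t\in\U$ and $\U$ is topologically closed, $v^\phi_t\in\U$ for all $t$, i.e. $\phi\in\Gen(\U)$.

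The heart of the proof is \textbf{convexity}, for which — by the cone property — it suffices to show $\phi_1,\phi_2\in\Gen(\U)\Rightarrow\psi:=\phi_1+\phi_2\in\Gen(\U)$. Convexity of $\Gen$ already gives $\psi\in\Gen$; let $(u_t)$ be its one-parameter semigroup. The plan is to express $u_t$ through compositions of the building blocks $v^{\phi_1}_s,v^{\phi_2}_s$ by a Lie--Trotter product formula
\begin{equation*}
 u_t=\lim_{n\to\infty}\bigl(v^{\phi_1}_{t/n}\circ v^{\phi_2}_{t/n}\bigr)^{\circ n}\quad\text{in }\Hol(D,D),\qquad t\ge0 .
\end{equation*}
Once this is available, every approximant lies in $\U$ because $\U$ is closed under composition, and, $\U$ being topologically closed, so does the limit $u_t$; as $t$ is arbitrary, $\psi\in\Gen(\U)$, which finishes the argument.

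I expect the product formula to be the main obstacle. One route is to invoke a Lie--Trotter-type theorem for one-parameter semigroups of holomorphic self-maps; the self-contained route is to recognise, for fixed $z\in D$ and $t\ge0$, the value $F_n(z):=\bigl(v^{\phi_1}_{t/n}\circ v^{\phi_2}_{t/n}\bigr)^{\circ n}(z)$ as the Lie-splitting (Euler) approximation, with step $t/n$, to the problem $\dot w=-\psi(w)$, $w(0)=z$, and to run the usual convergence analysis: the exact trajectory $s\mapsto u_s(z)$ has compact image in $D$, $\phi_1,\phi_2$ are Lipschitz on a compact neighbourhood of it, the one-step error is $O(n^{-2})$, and a Gronwall inequality yields both that the splitting trajectory stays in that neighbourhood for $n$ large and that $F_n(z)\to u_t(z)$; normality of $(F_n)_n$ in $\Hol(D,D)$ then promotes this to convergence in $\Hol(D,D)$. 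The delicate points are precisely the a priori confinement of the approximating trajectories to $D$ and the uniformity of the Lipschitz bounds; everything else is routine ODE perturbation theory.
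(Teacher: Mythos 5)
Your proposal is correct and follows essentially the same route as the paper: the cone property by time reparametrization, closedness via convergence of the generated semigroups, and convexity via the Lie--Trotter product formula $u_t=\lim_n\bigl(v^{\phi_1}_{t/n}\circ v^{\phi_2}_{t/n}\bigr)^{\circ n}$ combined with $\U$ being closed under composition and limits. The only difference is cosmetic: where you sketch Gronwall/Euler-splitting arguments, the paper simply cites the corresponding results from the literature (continuous dependence of semigroups on generators and the product formula for holomorphic semigroups).
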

Since by Theorem~\ref{TH_properties-of-BF}, $\BF$ is a topologically closed semigroup in $\Hol(\UH,\UH)$, the above theorem directly implies the following
\begin{corollary}\label{CR_BernsteinGen-conv-cone}
The set $\,\Gen(\BF)$ is a topologically closed convex cone in~$\Hol(\UH,\C)$.
\end{corollary}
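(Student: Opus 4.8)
The plan is to verify separately that $\Gen(\U)$ is a cone, is convex, and is topologically closed in $\Hol(D,\C)$, relying on the classical fact recalled just above the statement that the full generator set $\Gen=\Gen\big(\Hol(D,D)\big)$ is itself a topologically closed convex cone (see e.g.~\cite[Corollary~10.2.7]{BCD-Book}) and on one recurring tool: the stability of the generalized Loewner\,--\,Kufarev equation. Precisely, if Herglotz vector fields $\Phi_n$ admit, on each compact subset of $D$, a common $\Lloc$ majorant and if $\int_0^{s}\Phi_n(z,r)\DI r\to\int_0^{s}\Phi(z,r)\DI r$ as $n\to\infty$ locally uniformly in $(s,z)$ for some Herglotz field $\Phi$, then the associated evolution families converge locally uniformly (see e.g.~\cite{BCM1}). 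Applied to \emph{constant} fields this specializes to the continuous dependence $v^{\phi_n}_t\to v^\phi_t$ in $\Hol(D,D)$ whenever $\Gen\ni\phi_n\to\phi$ in $\Hol(D,\C)$. I also use the elementary time-rescaling identity $v^{a\phi}_t=v^\phi_{at}$, valid for $a\ge0$ and $\phi\in\Gen$.

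The cone property is routine: for $\phi\in\Gen(\U)$ and $a\ge0$ one has $a\phi\in\Gen$ and, by rescaling, $\{v^{a\phi}_t:t\ge0\}=\{v^\phi_{at}:t\ge0\}\subset\{v^\phi_s:s\ge0\}\subset\U$; together with $\id_D\in\U$ (the case $a=0$) this gives $a\phi\in\Gen(\U)$. Closedness is equally short: if $\Gen(\U)\ni\phi_n\to\phi$ in $\Hol(D,\C)$ then $\phi\in\Gen$ since $\Gen$ is closed, and for each $t\ge0$ the continuous-dependence fact gives $v^{\phi_n}_t\to v^\phi_t$; as $v^{\phi_n}_t\in\U$ and $\U$ is topologically closed, $v^\phi_t\in\U$, whence $\phi\in\Gen(\U)$.

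The crux is convexity, and by the cone property it suffices to show that $\phi,\psi\in\Gen(\U)$ implies $\phi+\psi\in\Gen(\U)$. Since $\Gen$ is convex, $\phi+\psi\in\Gen$, so $(v^{\phi+\psi}_t)$ is a well-defined one-parameter semigroup in $\Hol(D,D)$ and it only remains to check $v^{\phi+\psi}_t\in\U$ for every $t$. Fix $t>0$ and, for $n\in\N$, let $\Phi_n$ be the piecewise-constant Herglotz vector field on $[0,t]$ that takes the autonomous values $2\psi$ and $2\phi$ alternately on the $2n$ consecutive subintervals of length $t/(2n)$ (this is a genuine Herglotz field: HVF1 holds because $2\phi,2\psi\in\Gen$, and HVF3 holds with the constant majorant $2\max\{|\phi|,|\psi|\}$). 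Using $v^{2\phi}_{t/(2n)}=v^\phi_{t/n}$, the analogous identity for $\psi$, and the composition law EF2, one computes for the evolution family of $\Phi_n$ that $w^{(n)}_{0,t}=\big(v^\phi_{t/n}\circ v^\psi_{t/n}\big)^{\circ n}$, which lies in $\U$ because $\U$ is a semigroup. On the other hand $\int_0^{s}\Phi_n(z,r)\DI r=s\,(\phi(z)+\psi(z))+O(1/n)$ uniformly for $s\in[0,t]$ and locally uniformly in $z$, so $\Phi_n$ converges in the stability sense to the constant Herglotz field $\phi+\psi$; hence $w^{(n)}_{0,t}\to v^{\phi+\psi}_t$ locally uniformly and, $\U$ being topologically closed, $v^{\phi+\psi}_t\in\U$. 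As $t>0$ was arbitrary, $\phi+\psi\in\Gen(\U)$. (Alternatively, the identity $v^{\phi+\psi}_t=\lim_n\big(v^\phi_{t/n}\circ v^\psi_{t/n}\big)^{\circ n}$ is the Lie\,--\,Trotter product formula for holomorphic semiflows and can be quoted directly, e.g.\ from~\cite{ShoikhetBook}.)

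The step I expect to be the main obstacle is this convexity argument: it depends on having a clean, directly applicable version of either the Lie\,--\,Trotter product formula or the Loewner\,--\,Kufarev stability principle, and along the second route one still has to verify carefully that the $\Phi_n$ are Herglotz vector fields and that the integral convergence is locally uniform jointly in $s$ and $z$ — routine but not vacuous — whereas the cone and closedness steps are pure bookkeeping. Corollary~\ref{CR_BernsteinGen-conv-cone} then follows immediately: by Theorem~\ref{TH_properties-of-BF}(B),(D),(F) the class $\BF$ is a topologically closed semigroup of holomorphic self-maps of the simply connected domain $\UH\neqC$, so Theorem~\ref{TH_generators-convex-cone} applies with $D=\UH$ and $\U=\BF$.
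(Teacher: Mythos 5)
Your proposal is correct, and its skeleton coincides with the paper's proof of Theorem~\ref{TH_generators-convex-cone} (to which the corollary reduces, exactly as you say, via Theorem~\ref{TH_properties-of-BF}): the cone property by time rescaling $v^{a\phi}_t=v^\phi_{at}$, closedness by continuous dependence of the semigroup on its generator combined with the topological closedness of~$\U$, and additivity via the identity $v^{\phi+\psi}_t=\lim_{n}\big(v^{\phi}_{t/n}\circ v^{\psi}_{t/n}\big)^{\circ n}$ together with the fact that $\U$ is a closed semigroup. The only genuine difference is how that identity is justified: the paper simply quotes the Trotter product formula (\cite[Corollary~10.6.2]{BCD-Book} or \cite[Corollary~4]{ProdFormula}), whereas you derive it from the stability of the generalized Loewner\,--\,Kufarev equation applied to piecewise-constant Herglotz vector fields alternating between $2\phi$ and $2\psi$ --- an alternative you yourself flag. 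Your derivation is sound (the computation $w^{(n)}_{0,t}=\big(v^\phi_{t/n}\circ v^\psi_{t/n}\big)^{\circ n}$ and the $O(1/n)$ estimate on the integrated fields both check out, and the stability lemma you invoke is exactly the one the paper uses elsewhere, in the proof of Theorem~\ref{TH_EvolFam-HVF}); it buys self-containedness at the cost of invoking the non-autonomous theory for what is ultimately an autonomous statement.
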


\begin{proof}[\proofof{Theorem~\ref{TH_generators-convex-cone}}]
The fact that $\Gen(\U)$ is a cone is trivial. Indeed, if $\phi\in\Gen(\U)$ generates a one-parameter semigroup~$(v_t^\phi)$, then for any $\alpha\ge0$ the function $\alpha\phi$ is the infinitesimal generator of the one-parameter semigroup $(\tilde v_t)$ formed by the mappings ${\tilde v_t:=v_{\alpha t}^\phi}$.

To show that $\Gen(\U)$ is topologically closed, consider an arbitrary sequence ${(\phi_n)\subset \Gen(\U)}$ converging locally uniformly in~$D$ to some $\phi\in\Hol(D,\C)$. By \cite[Theorem~10.5.6, p.\,300]{BCD-Book}, $\phi$ is also an infinitesimal generator in~$D$ and, for each $t\ge0$, we have ${v_t^n\to v_t}$ in $\Hol(D,D)$ as $n\to+\infty$, where $(v_t^n)$ and $(v_t)$ stand for the one-parameter semigroups generated by~$\phi_n$ and~$\phi$, respectively. By the hypothesis, $(v_t^n)\subset\U$ and $\U$ is closed in $\Hol(D,D)$. Therefore, $(v_t)\subset\U$ and hence $\phi\in\Gen(\U)$. This means that $\Gen(\U)$ is a closed subset of~$\Hol(D,\C)$.

To complete the proof it remains to show that ${\phi:=\phi_1+\phi_2\in\Gen(\U)}$ provided that $\phi_1,{\phi_2\in\Gen(\U)}$.
By \cite[Corollary~10.2.7, p.\,287]{BCD-Book}, $\phi$~is an infinitesimal generator in~$D$. Denote by $(v_t^1)$, $(v_t^2)$, and $(v_t)$ the one-parameter semigroups generated by~$\phi_1$, $\phi_2$, and~$\phi$, respectively. The Trotter product formula for one-parameter semigroups, see e.g. \cite[Corollary~10.6.2, p.\,304]{BCD-Book} or \cite[Corollary~4]{ProdFormula}, states that for each ${t\ge0}$,
$$
v_t=\lim_{n\to+\infty}\big(v^1_{t/n}\circ v^2_{t/n}\big)^{\circ n},
$$
where the limit exists in the topology of $\Hol(D,D)$. Since by the hypothesis, $\U$ is a closed subset of~$\Hol(D,D)$ and all finite compositions of its elements again belong to~$\U$, it follows that ${(v_t)\subset\U}$, i.e. $\phi\in\Gen(\U)$.
\end{proof}

\subsection{Evolution families in topologically closed semigroups}\label{SS_EF-in-semigroups}
A natural question arises in connection with the correspondence discussed at the end of Section~\ref{SS_REF}: how are the properties of the elements in a (reverse) evolution family related to those of the associated Herglotz vector field? For a particular property related to the boundary behaviour at a given point (namely, for having a boundary regular fixed point) the answer was given as Theorem \ref{thm:boundary_derivative}. The main new result of this section is a general statement of a similar kind.

\begin{theorem}\label{TH_EvolFam-HVF}
Let $\DC$ be a simply connected domain and $\,\U$ a topologically closed semigroup of holomorphic self-maps of~$D$. Let $(w_{s,t})$ be a (reverse) evolution family in~$D$ with associated Herglotz vector field~$\phi$. Then the following two conditions are equivalent:
\begin{itemize}
\item[(i)] $w_{s,t}\in\U$ for all $(s,t)\in\Delta(I)$;\vskip.5ex
\item[(ii)] $\phi(\cdot,t)\in\Gen(\U)$ for a.e. $t\in I$.
\end{itemize}
\end{theorem}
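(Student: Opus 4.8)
The plan is to prove the two implications separately, with a single common engine: a continuous-dependence (equivalently, a ``splitting-scheme convergence'') property for the generalized Loewner\,--\,Kufarev ODE~\eqref{EQ_ini-LK-ODE} with respect to its driving Herglotz vector field, used together with Theorem~\ref{TH_generators-convex-cone} (that $\Gen(\U)$ is a closed convex cone), the hypothesis that $\U$ is closed under composition, and the topological closedness of~$\U$. I will argue throughout for a usual evolution family $(w_{s,t})$; the reverse case is entirely analogous (compositions taken in the opposite order) or can be reduced to the usual one via Remark~\ref{RM_REF-EF}. I will also use freely the autonomous instance of Theorem~\ref{TH_mainBCM1}: a Herglotz vector field equal to a fixed $\psi\in\Gen\big(\Hol(D,D)\big)$ on a subinterval $[a,b)$ generates over $[a,b]$ the time-shifted one-parameter semigroup of~$\psi$, namely $w_{a',b'}=v^\psi_{b'-a'}$, and $v^{\alpha\psi}_t=v^\psi_{\alpha t}$ for $\alpha\ge0$.

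For (ii)$\,\Rightarrow\,$(i) I would fix $(s,t)\in\Delta(I)$ and, for each $n$, pick a partition $s=t^n_0<\dots<t^n_{N_n}=t$ of mesh tending to~$0$, then take $\phi_n$ to be the Herglotz vector field on $[s,t]$ equal on each $[t^n_k,t^n_{k+1})$ to the average $\psi^n_k:=(t^n_{k+1}-t^n_k)^{-1}\!\int_{t^n_k}^{t^n_{k+1}}\phi(\cdot,r)\DI r$. Since $\phi(\cdot,r)\in\Gen(\U)$ for a.e.~$r$ by~(ii) and $\Gen(\U)$ is a closed convex cone (Theorem~\ref{TH_generators-convex-cone}), the barycenter $\psi^n_k$ again lies in~$\Gen(\U)$; hence the evolution family $(w^{\phi_n}_{s',t'})$ has $w^{\phi_n}_{t^n_k,t^n_{k+1}}=v^{\psi^n_k}_{t^n_{k+1}-t^n_k}\in\U$, and by EF2 every $w^{\phi_n}_{s',t'}$ — a finite composition of such maps — lies in~$\U$. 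Since $\int_s^r\phi_n(z,u)\DI u$ agrees with $\int_s^r\phi(z,u)\DI u$ at the partition points and differs from it elsewhere by at most $\int_{t^n_k}^{t^n_{k+1}}M_K(u)\DI u$, the integrated fields converge locally uniformly in $(z,r)$, with $L^1$-norms of the dominating functions locally bounded by those of~$\phi$ (HVF3). Continuous dependence then yields $w^{\phi_n}_{s,t}\to w_{s,t}$ locally uniformly, whence $w_{s,t}\in\U$ by topological closedness of~$\U$; as $(s,t)$ was arbitrary, (i) follows.

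For (i)$\,\Rightarrow\,$(ii) I would take any $t_0\in I$, $t_0<T$, that is a joint Lebesgue point of $r\mapsto\phi(\cdot,r)\in\Hol(D,\C)$ and of every~$M_K$ (a.e.\ $t_0$ works). From $z-w_{t_0,t_0+h}(z)=\int_{t_0}^{t_0+h}\phi\big(w_{t_0,u}(z),u\big)\DI u$ (the integral form of~\eqref{EQ_ini-LK-ODE}), $w_{t_0,u}(z)\to z$ as $u\to t_0^+$, and the Lebesgue-point property, I get $\phi(\cdot,t_0)=\lim_{h\to0^+}h^{-1}\big(\id_D-w_{t_0,t_0+h}\big)$ in $\Hol(D,\C)$; by HVF1 this generates a one-parameter semigroup $(v^{t_0}_\tau)_{\tau\ge0}$, and it remains to place each $v^{t_0}_\tau$ in~$\U$. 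Fixing $\tau>0$ and, for large~$n$ (so that $t_0+\tau/n\le T$), the Herglotz vector field $\Phi_n$ on $[0,1]$ equal on $[k/n,(k+1)/n)$ to $\tau\,\phi\big(\cdot,\,t_0+\tau(\sigma-k/n)\big)$, a change of variables in~\eqref{EQ_ini-LK-ODE} shows its evolution family satisfies $w^{\Phi_n}_{k/n,(k+1)/n}=w_{t_0,t_0+\tau/n}$, so $w^{\Phi_n}_{0,1}=\big(w_{t_0,t_0+\tau/n}\big)^{\circ n}\in\U$. Since $\int_0^\sigma\Phi_n(z,r)\DI r$ equals $\lfloor n\sigma\rfloor\int_{t_0}^{t_0+\tau/n}\phi(z,u)\DI u$ up to $O\!\big(\int_{t_0}^{t_0+\tau/n}M_K(u)\DI u\big)$, which by Lebesgue differentiation converges locally uniformly to $\sigma\tau\,\phi(z,t_0)$ — the integrated field of the autonomous $\Psi\equiv\tau\phi(\cdot,t_0)$, whose time-$1$ evolution-family map is $v^{\tau\phi(\cdot,t_0)}_1=v^{t_0}_\tau$ — continuous dependence gives $\big(w_{t_0,t_0+\tau/n}\big)^{\circ n}\to v^{t_0}_\tau$, hence $v^{t_0}_\tau\in\U$ and $\phi(\cdot,t_0)\in\Gen(\U)$.

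The main obstacle is the continuous-dependence statement for~\eqref{EQ_ini-LK-ODE}. Its delicate point is estimating $\int_s^t\big[\phi_n(W(u),u)-\phi(W(u),u)\big]\DI u$ with a moving base point $W(u)=w_{s,u}(z)$: I would freeze $W$ on the subintervals of a fixed fine partition, use the pointwise convergence of the integrated fields at the finitely many partition points, bound the freezing error by the modulus of continuity of~$W$ times the (locally uniformly bounded) $L^1$-norms of the dominating functions, and close with Gronwall's inequality. A secondary technical point, in (i)$\,\Rightarrow\,$(ii), is making precise the full-measure set of admissible~$t_0$, i.e.\ the joint Lebesgue points in the $\Hol(D,\C)$-valued sense.
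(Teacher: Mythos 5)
Your proposal is correct and follows essentially the same route as the paper: for (ii)$\Rightarrow$(i) the paper likewise discretizes $\phi$ by averaging over dyadic subintervals, uses that $\Gen(\U)$ is a closed convex cone (Theorem~\ref{TH_generators-convex-cone}) to keep the averaged fields in $\Gen(\U)$, notes the resulting evolution maps are finite compositions of semigroup elements of $\U$, and passes to the limit via an ODE-stability lemma cited from the literature; for (i)$\Rightarrow$(ii) it likewise identifies $\phi(\cdot,t_0)$ as the a.e.\ limit of $(\,\id_D-w_{t_0,t_0+h})/h$ and places the generated semigroup in $\U$ via the product formula $v^{t_0}_\tau=\lim_n w_{t_0,t_0+\tau/n}^{\circ n}$, which it cites rather than re-derives. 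The only substantive deviations are that you sketch proofs of the two black-box ingredients (the exponential/product formula and the continuous-dependence lemma) instead of citing them, and that the paper first reparametrizes time (via Proposition~\ref{PR-HVF-intergrability}) to make the field uniformly bounded on compacts, which streamlines the weak-integral/compactness step that you handle directly with the $L^1$ bounds.
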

\begin{remark}
The assumption that $\U$ is topologically closed plays an important role in the above theorem. Compare, e.g., with Theorem~\ref{thm:boundary_derivative} in Section~\ref{sec:BRFP}, which concerns holomorphic self-maps with a given BRFP. They form a semigroup, but it is not a closed subset in~$\Hol(D,D)$. As a result, an additional condition has to be imposed on a Herglotz vector field in order to ensure that the corresponding evolution family is contained in the semigroup in question.
\end{remark}
Nevertheless, since by Theorem~\ref{TH_properties-of-BF}, Bernstein functions different from the identical zero form a topologically closed semigroup in $\Hol(\UH,\UH)$, Theorem~\ref{TH_EvolFam-HVF} implies directly the following corollary.

\begin{corollary}\label{CR_Herglotz-VF-for-Bernstein-evol-fam}
  An absolutely continuous evolution family $(w_{s,t})_{(s,t)\in\Delta(I)}$ in~$\UH$ is contained in the class~$\BF$ if and only if the Herglotz vector field $\phi$ associated to~$(w_{s,t})$ satisfies the following condition: $\phi(\cdot,t)\in\Gen(\BF)$ for a.e.~$t\in I$.
\end{corollary}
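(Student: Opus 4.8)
\emph{Proof proposal.}
The plan is to prove the equivalence in Theorem~\ref{TH_EvolFam-HVF} directly and then read off the corollary by taking $\U=\BF$, which is a topologically closed semigroup of $\Hol(\UH,\UH)$ by Theorem~\ref{TH_properties-of-BF}. First I would reduce the ``reverse'' case to the usual one: by Remark~\ref{RM_REF-EF}, over a finite interval $[0,T]$ a reverse evolution family becomes an ordinary one under the time reflection $t\mapsto T-t$, and rewriting the Loewner\,--\,Kufarev ODE~\eqref{EQ_ini-revLK-ODE} accordingly shows that the associated Herglotz vector field becomes $(z,t)\mapsto\phi(z,T-t)$. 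Since both (i) and (ii) are invariant under this transformation (and both localize to finite subintervals of $I$), it suffices to treat an ordinary absolutely continuous evolution family $(w_{s,t})$ with associated Herglotz vector field $\phi$.

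For (ii)$\Rightarrow$(i) I would use time-freezing. Fix $(s,t)\in\Delta(I)$ and a partition $s=t_0<\dots<t_n=t$, and put $\phi_k:=(t_k-t_{k-1})^{-1}\int_{t_{k-1}}^{t_k}\phi(\cdot,r)\DI$. Regarding $r\mapsto\phi(\cdot,r)$ as a curve in the separable Fr\'echet space $\Hol(D,\C)$ — Bochner integrable on compact subintervals by HVF2 and HVF3 — which by hypothesis (ii) takes values in $\Gen(\U)$ for a.e.\ $r$, and using that $\Gen(\U)$ is a closed convex cone (Theorem~\ref{TH_generators-convex-cone}) while a Bochner integral lies in the closed convex hull of the essential range, one gets $\phi_k\in\Gen(\U)$. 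The evolution family $(w^{(n)}_{\cdot,\cdot})$ of the Herglotz field equal to $\phi_k$ on $[t_{k-1},t_k)$ coincides on each block with the one-parameter semigroup $v^{\phi_k}$ (uniqueness in~\eqref{EQ_autonomous-LK}), so $w^{(n)}_{s,t}=v^{\phi_n}_{t_n-t_{n-1}}\circ\dots\circ v^{\phi_1}_{t_1-t_0}\in\U$. As the mesh tends to $0$, the primitives $\sigma\mapsto\int_s^\sigma\phi^{(n)}(z,r)\DI$ converge to $\sigma\mapsto\int_s^\sigma\phi(z,r)\DI$ uniformly in $\sigma\in[s,t]$ and locally uniformly in $z$ (the error on a block is at most $2\int_{t_{k-1}}^{t_k}M_K$, small by absolute continuity), with uniformly dominated fields; by continuous dependence of the solutions of the generalized Loewner\,--\,Kufarev equation on the associated Herglotz vector field, $w^{(n)}_{s,t}\to w_{s,t}$ in $\Hol(D,D)$, and closedness of $\U$ yields $w_{s,t}\in\U$.

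For (i)$\Rightarrow$(ii) I would fix a time $t_0\in I$ that is a Lebesgue point of $r\mapsto\phi(z,r)$ simultaneously for all $z$ in a fixed countable dense subset of $D$ (a.e.\ $t_0$ qualifies; together with HVF3 and Montel's theorem this forces $h^{-1}\int_{t_0}^{t_0+h}\phi(\cdot,r)\DI\to\phi(\cdot,t_0)$ in $\Hol(D,\C)$), and show $(v^{\phi(\cdot,t_0)}_\tau)_{\tau\ge0}\subset\U$. The point is that for small $h>0$ the iterate $(w_{t_0,t_0+h})^{\circ k}$ is the transition map from time $0$ to time $kh$ of the evolution family of the ``window-repeating'' Herglotz field $\Phi_h(z,\sigma):=\phi(z,\,t_0+\sigma-h\lfloor\sigma/h\rfloor)$, whose primitive is $\int_0^{kh}\Phi_h(z,\sigma)\DI=k\int_{t_0}^{t_0+h}\phi(z,r)\DI$. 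Letting $h\to0^+$ with $kh\to\tau$, this tends to $\tau\,\phi(z,t_0)=\int_0^\tau\phi(z,t_0)\DI$, and in fact $\int_0^{\sigma'}\Phi_h(z,\cdot)\DI\to\sigma'\phi(z,t_0)$ uniformly in $\sigma'\in[0,\tau]$, locally uniformly in $z$, with uniformly dominated fields. Continuous dependence of the Loewner\,--\,Kufarev solutions then gives $(w_{t_0,t_0+h})^{\circ\lfloor\tau/h\rfloor}\to v^{\phi(\cdot,t_0)}_\tau$ in $\Hol(D,D)$; since each $(w_{t_0,t_0+h})^{\circ k}\in\U$ by (i) and $\U$ is topologically closed, $v^{\phi(\cdot,t_0)}_\tau\in\U$ for all $\tau\ge0$, i.e.\ $\phi(\cdot,t_0)\in\Gen(\U)$. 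As this holds for a.e.\ $t_0$, condition (ii) follows; and Corollary~\ref{CR_Herglotz-VF-for-Bernstein-evol-fam} is the special case $\U=\BF$.

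I expect the main obstacle to be the quantitative ingredient used in both implications: a continuous-dependence theorem for the generalized Loewner\,--\,Kufarev equation in the ``integrated'' topology — convergence of the primitives of the Herglotz fields plus a uniform local $L^1$ bound forcing convergence of the associated evolution families — stated robustly enough to handle the folded fields $\Phi_h$, whose number of constancy intervals grows like $\tau/h$. Keeping the dominating functions under control as $h\to0^+$ and upgrading the Lebesgue-point convergence to be locally uniform in $z$ (via Vitali or Montel) is the delicate part. A secondary technical point is justifying that the integral over a subinterval of an a.e.\ $\Gen(\U)$-valued Herglotz field again lies in $\Gen(\U)$, which is handled by identifying the pointwise integral with a Bochner integral in $\Hol(D,\C)$ and invoking Theorem~\ref{TH_generators-convex-cone}.
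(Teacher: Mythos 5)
Your proposal is correct, and its final step --- specializing Theorem~\ref{TH_EvolFam-HVF} to $\U=\BF$, which is topologically closed in $\Hol(\UH,\UH)$ by Theorem~\ref{TH_properties-of-BF} --- is precisely the paper's entire proof of the corollary; the rest of your text is a re-derivation of Theorem~\ref{TH_EvolFam-HVF} itself. For (ii)$\Rightarrow$(i) your argument is essentially the paper's: the paper likewise approximates $\phi$ by fields that are piecewise constant in time (dyadic averages, after first reparametrizing time so that the field is uniformly bounded on compacta), places these averages in $\Gen(\U)$ via a vector-valued integral together with Theorem~\ref{TH_generators-convex-cone}, observes that the resulting evolution families are finite compositions of semigroup elements, and passes to the limit by the same continuous-dependence lemmas for the Loewner--Kufarev ODE that you invoke; your Hahn--Banach/closed-convex-hull justification for the block averages is an adequate substitute for the paper's compactness argument, though the paper's preliminary reparametrization is what makes the domination hypothesis of the ODE lemma trivial, whereas you must supply the uniform-integrability bookkeeping you allude to. For (i)$\Rightarrow$(ii) you take a genuinely different route: the paper quotes the a.e.\ differentiability of $t\mapsto w_{s,t}$ from \cite[Theorem~3.6]{CDG-AnnulusI} to obtain $(\id_D-w_{s,t})/(t-s)\to\phi(\cdot,s)$ for a.e.\ $s$, and then the exponential/product formula $v^s_t=\lim_{n}w^{\circ n}_{s,s+t/n}$ from \cite[Theorem~10.6.1]{BCD-Book}, after which closedness of $\U$ finishes immediately; your folded-field Euler scheme reconstructs that product formula by hand through continuous dependence at Lebesgue points. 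This buys self-containedness at the price of the delicate estimate you correctly flag --- one needs $t_0$ to be simultaneously a Lebesgue point of $t\mapsto\phi(z,t)$ for a countable dense set of $z$ and of the majorants $M_K$, so that the tiled fields $\Phi_h$ stay uniformly integrable --- while the paper's citation route avoids this entirely.
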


The proof of (ii)\,$\Rightarrow$\,(i) in Theorem~\ref{TH_EvolFam-HVF} makes use of the following rather simple observation, which however we have not met explicitly stated elsewhere.
\begin{proposition}\label{PR-HVF-intergrability}
For every Herglotz vector field $\phi$ in a simply connected domain $\DC$, there exists a non-negative function ${M\in\Lloc(I)}$ with the following property: for any compact set $K\subset D$ there is a constant $C_K>0$ such that
$$
\max_{z\in K}|\phi(z,t)|\le C_K M(t)\quad\text{for a.e.~$t\ge0$}.
$$
\end{proposition}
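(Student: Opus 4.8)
The plan is to reduce the statement, by a conformal change of variable, to the case $D=\UD$, and then to establish a pointwise bound for infinitesimal generators in the disk which is \emph{uniform} over all generators; feeding this bound into the defining conditions of a Herglotz vector field produces the desired~$M$. For the reduction, choose by the Riemann mapping theorem a conformal map $f\colon\UD\to D$. By Remark~\ref{RM_conformal-trans-of-HVFs}, $\psi(\zeta,t):=\phi\big(f(\zeta),t\big)/f'(\zeta)$ is a Herglotz vector field in~$\UD$. If the proposition is already known for $\psi$, with some $M\in\Lloc(I)$ and constants $c_L$ for compact $L\subset\UD$, then for a compact $K\subset D$ the set $L:=f^{-1}(K)$ is compact in~$\UD$ and, for $z=f(\zeta)\in K$, we have $|\phi(z,t)|=|f'(\zeta)|\,|\psi(\zeta,t)|\le\big(\max_{L}|f'|\big)\,c_L\,M(t)$ for a.e.~$t$; so the same $M$ together with $C_K:=\big(\max_{f^{-1}(K)}|f'|\big)\,c_{f^{-1}(K)}$ works in~$D$. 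Hence we may and do assume $D=\UD$.

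The heart of the matter is the following claim: there is an absolute constant $C>0$ such that every $g\in\Gen\big(\Hol(\UD,\UD)\big)$ satisfies
\[
|g(z)|\ \le\ \frac{C}{1-|z|}\,\big(|g(0)|+|g'(0)|\big)\qquad\text{for all }z\in\UD.
\]
To prove it I would invoke the Berkson--Porta representation~\eqref{EQ_BP}, $g(z)=(z-\tau)(1-\overline\tau z)\,p(z)$ with $\tau\in\overline\UD$ and $p\in\Hol(\UD,\overline\UH)$. The polynomial factor is harmless: $|(z-\tau)(1-\overline\tau z)|\le(1+|z|)^2\le4$. For $p$, the Herglotz representation of functions with non-negative real part gives $|p(z)|\le|\Im p(0)|+\frac{1+|z|}{1-|z|}\,\Re p(0)\le\frac{2}{1-|z|}\,|p(0)|$ and also $|p'(0)|\le2\,\Re p(0)\le2\,|p(0)|$. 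It then remains to dominate $|p(0)|$ by the data of $g$ near the origin, and here a dichotomy on $|\tau|$ does the job: since $g(0)=-\tau p(0)$ and $g'(0)=(1+|\tau|^2)\,p(0)-\tau p'(0)$, when $|\tau|\ge\frac14$ the first identity yields $|p(0)|\le4|g(0)|$, while when $|\tau|<\frac14$ the second one, combined with $|p'(0)|\le2|p(0)|$, yields $|p(0)|\le2|g'(0)|$; in either case $|p(0)|\le4\big(|g(0)|+|g'(0)|\big)$, and the claim follows (one may take $C=32$).

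Finally, set $M(t):=\max_{|z|\le1/2}|\phi(z,t)|$. By HVF2 this is measurable (the maximum over the compact disk $\{|z|\le1/2\}$ equals the supremum over a countable dense subset), and by HVF3 it is dominated by an element of $\Lloc(I)$; thus $M$ is a non-negative function in $\Lloc(I)$. For every $t\in I$ the function $g:=\phi(\cdot,t)$ is an infinitesimal generator by HVF1, so the claim together with Cauchy's inequalities $|g(0)|\le M(t)$ and $|g'(0)|\le2M(t)$ gives $|\phi(z,t)|\le\frac{3C}{1-|z|}\,M(t)$ for all $z\in\UD$. Given a compact $K\subset\UD$, choose $r\in(0,1)$ with $K\subset\{|z|\le r\}$ and put $C_K:=3C/(1-r)$; then $\max_{z\in K}|\phi(z,t)|\le C_K M(t)$ for a.e.\ $t\in I$, as required.

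The only genuinely non-routine step is the uniform estimate in the claim, i.e.\ controlling the Berkson--Porta factor $p$ near the origin by $|g(0)|+|g'(0)|$ \emph{independently of the position of the Denjoy--Wolff point} $\tau$; this is precisely what the case split $|\tau|\ge\frac14$ versus $|\tau|<\frac14$ is designed to handle. The conformal reduction and the manipulations with HVF1--HVF3 are bookkeeping.
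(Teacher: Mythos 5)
Your proof is correct, and it takes a genuinely different route at the one non-routine point. The paper also reduces to $D=\UD$ and also runs through the Berkson--Porta factorization plus a Harnack-type growth bound for the half-plane-valued factor~$p$; but it invokes the \emph{non-autonomous} Berkson--Porta decomposition of the whole Herglotz vector field (\cite[Theorem~4.8]{BCM1}), which supplies a measurable $t\mapsto\tau(t)$ and a factor $p(z,t)$ with $t\mapsto p(0,t)$ locally integrable, and then simply sets $M(t):=|p(0,t)|$. You instead apply the autonomous formula~\eqref{EQ_BP} separately for each fixed~$t$ and never need any measurability of $\tau(t)$ or $p(0,t)$: your $M(t):=\max_{|z|\le 1/2}|\phi(z,t)|$ is defined directly from~$\phi$, so its measurability and local integrability come straight from HVF2--HVF3, and the burden shifts to your uniform claim $|g(z)|\le C(1-|z|)^{-1}\bigl(|g(0)|+|g'(0)|\bigr)$ for all $g\in\Gen\bigl(\Hol(\UD,\UD)\bigr)$. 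I checked the details of that claim --- the identities $g(0)=-\tau p(0)$ and $g'(0)=(1+|\tau|^2)p(0)-\tau p'(0)$, the coefficient bound $|p'(0)|\le 2\Re p(0)$, and the dichotomy on $|\tau|$ --- and they are all correct, as are the Cauchy estimates at radius $1/2$. What each approach buys: the paper's proof is shorter given the cited machinery, while yours is more self-contained (no appeal to the measurable decomposition), produces an explicit absolute constant, and the uniform pointwise estimate for generators is a clean statement of independent use.
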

\begin{proof}
Thanks to Remark~\ref{RM_conformal-trans-of-HVFs}, we may suppose that $D=\UD$. Then by the non-autonomous extension of the Berkson\,--\,Porta formula \cite[Theorem~4.8]{BCM1},
\begin{equation}\label{EQ_BP-non-aut}
 \phi(z,t)=\big(z-\tau(t)\big)\big(1-\overline{\tau(z)}z\big)p(z,t) \quad \text{for all~$z\in\UD$ and
 a.e.~$t\in I$},
\end{equation}
where $\tau\colon I\to\overline\UD$ is a measurable function and $p\colon \UD\times I\to\C$ satisfies the following two conditions:
\begin{itemize}
\item[(a)] for a.e. $t\in I$, the function $p(\cdot,t)$ is holomorphic in~$\UD$ and $\Re p(\cdot,t)\ge0$;
\item[(b)] for any $z\in\UD$, the function $p(z,\cdot)$ is locally integrable on~$I$.
\end{itemize}

Set $M(t):=|p(0,t)|$ for all ${t\in I}$. Then by~(b), ${M\in\Lloc(I)}$. Moreover, thanks to~(a) with the help of the Harnack inequality,  see e.g. \cite[ineq.\,(11) on p.\,40]{Pombook75}, for a.e. ${t\in I}$ and all $z\in\UD$ we have
\begin{align}\label{EQ_p-est}
|p(z,t)|&\le |\Im p(0,t)|\,+\,\frac{1+|z|}{1-|z|}\Re p(0,t)\\\notag &\le\frac{1+|z|}{1-|z|}\big(|\Im p(0,t)|+\Re p(0,t)\big)\le\frac{1+|z|}{1-|z|}\sqrt{2}M(t).
\end{align}

Now the conclusion of the proposition follows easily from~\eqref{EQ_BP-non-aut} and~\eqref{EQ_p-est}.
\end{proof}

\begin{proof}[\proofof{Theorem~\ref{TH_EvolFam-HVF}}]
First of all, recall that according to Remark~\ref{RM_REF-EF}, if $(v_{s,t})$ is reverse evolution family over~$I$ with associated Herglotz vector~$\phi$, then for any ${\Tau\in I}$, the family $(w_{s,t})_{0\le s\le t\le \Tau}$ formed by the functions ${w_{s,t}:=v_{\Tau-t,\Tau-s}}$ is an evolution family over~$[0,\Tau]$. It is easy to see that $[0,\Tau]\ni t\mapsto \phi(\cdot,\Tau-t)$ is the Herglotz vector field associated with~$(w_{s,t})$.
It is, consequently,  sufficient to prove the theorem for the case of evolution families.

\medskip
\noindent{\sc Proof of} (i)\,$\Rightarrow$\,(ii).
By \cite[Theorem 3.6]{CDG-AnnulusI}, for each evolution family $(w_{s,t})$ in~$D$, there exists a null-set~$N\subset I$ such that for any $s\in [0,T)$ and any~$t_0\in I_s\setminus N$ the limit
$$
\lim_{I_s\ni t\to t_0}\frac{w_{s,t_0}-w_{s,t}}{t-t_0}
$$
exists in $\Hol(D,\C)$ and coincides with $z\mapsto \phi\big(w_{s,t_0}(z),t_0\big)$.
In particular, if $s\in [0,T)\setminus N$, then taking $t_0:=s$ we see that $$\frac{\id_D-w_{s,t}}{t-s}\to \phi(\cdot,s)$$ locally uniformly in~$D$ as ${t\to s^+}$. It follows, see e.g. \cite[Theorem~10.6.1]{BCD-Book} or \cite[Theorem~3]{ProdFormula}, that $\phi(\cdot, s)$ is an infinitesimal generator in~$D$ and the corresponding one-parameter semigroup $(v_t^s)$ is given by the formula
\begin{equation}\label{EQ_prod}
v_t^s=\lim_{n\to+\infty}w^{\circ n}_{s,s+t/n} \quad \text{for all $t\ge0$},
\end{equation}
where the limit is attained locally uniformly in~$D$. By the hypothesis, $\U$ is a topologically closed semigroup. Therefore, if ${(w_{s,t})\subset\U}$, then thanks to~\eqref{EQ_prod} we also have ${(v_t^s)\subset\U}$. Thus, assertion~(i) implies that $\phi(\cdot,s)\in\Gen(\U)$ for a.e. ${s\in[0,T)}$, i.e. that~(ii) holds.

\medskip
\noindent{\sc Proof of} (ii)\,$\Rightarrow$\,(i).  By Theorem~\ref{TH_generators-convex-cone}, $\Gen(\U)$ is a topologically closed convex cone.
By Proposition~\ref{PR-HVF-intergrability}, for any compact set $K\subset D$, there exists $C_K>0$ such that
\begin{equation}\label{EQ_G-est}
\max_{z\in K}|\phi(z,t)|\le C_K M(t)\quad\text{for a.e. $t\in I$},
\end{equation}
where $M\colon I\to [0,+\infty)$ is a locally integrable  function (not depending on~$K$).

Let $g(t):=\int_{0}^t M(s)\di s$ for all~${t\ge0}$. Clearly, we may suppose that $M(t)\ge1$ for all $t\in I$; otherwise we would replace the function~$M$ by $t\mapsto{\max\{M(t),1\}}$.
Then both ${g\colon I\to J:=g(I)\subset\R}$ and ${f:=g^{-1}\colon J\to I}$ are strictly increasing and locally absolutely continuous. Taking into account~\eqref{EQ_G-est}, it follows that the formula
$$
\psi(z,\theta):=\frac{\phi(z,f(\theta))}{M(f(\theta))}\quad\text{for all $z\in D$ and $\theta\in J$},
$$
defines a Herglotz vector field $\psi:D\times J\to\C$. According to Theorem~\ref{TH_mainBCM1}, the evolution family associated with $\psi$ can be constructed via solutions ${\omega=\omega(\theta)}$ to the initial value problem
$$
\frac{\di \omega(\theta)}{\di \theta}+\psi\big(\omega(\theta),\theta\big)=0\quad \text{for a.e.\ } \theta\ge \eta,~\theta\in J;\quad \omega(\eta)=z.
$$
The variable change $\theta=g(t)$ relates the trajectories of the Herglotz vector field~$\psi$ to those of~$\phi$. Using this fact, it is easy to see that the evolution family $(\omega_{\eta,\theta})$ associated with~$\psi$ is given by $\omega_{\eta,\theta}=w_{f(\eta),f(\theta)}$ for all $(\eta,\theta)\in\Delta(J)$. Therefore, it is sufficient to show that $(\omega_{\eta,\theta})\subset \U$.

From~\eqref{EQ_G-est} it follows that
\begin{equation}\label{EQ_H-est}
\max_{z\in K}|\psi(z,\theta)|\le C_K\quad\text{for a.e. $\theta\in J$}
\end{equation}
and any compact set $K\subset D$.

Consider the sequence $(\psi_n)$ of functions on $D\times J$ defined by
\begin{equation}\label{EQ_H_n}
\psi_n(z,\theta):=2^n\!\!\int\limits_{\eta(\theta,n)}^{\eta(\theta,n)+2^{-n}}\!\!\psi(z,\eta)\,\di\eta,\quad \theta\in J,~n\in\N,
\end{equation}
where $\eta(\theta,n):=2^{-n}\lfloor2^n \theta\rfloor$ and $\lfloor\cdot\rfloor$ stands for the integer part.

Denote by $\Gen_0$ the set of all $\varphi\in\Gen(\U)$ such that $\sup_K|\varphi|\le C_K$ for every compact set $K\subset D$. Recall that by Theorem~\ref{TH_generators-convex-cone}, $\Gen(\U)$ is convex and topologically closed. Therefore, $\Gen_0$ is a compact convex set in the locally convex Hausdorff space $\Hol(D,\C)$. By~\eqref{EQ_H-est} the integrand in~\eqref{EQ_H_n} belongs to~$\Gen_0$ for a.e.~${t\ge0}$, and by \cite[Lemma~4.7]{BCM1} the map ${\theta\mapsto \psi(\cdot,\theta)\in\Hol(D,\C)}$ is measurable. Therefore, for any fixed ${n\in\Natural}$ and $\theta\in J$, the integral in~\eqref{EQ_H_n} can be understood as a weak integral of~${\eta\mapsto \psi(\cdot,\eta)}$, and moreover, its value belongs to $2^{-n}\Gen_0$, see e.g. \cite[Theorem~1 in \S18.1.3]{Kadets}. Hence ${\psi_n(\cdot,\theta)\in\Gen_0\subset\Gen(\U)}$ for all ${\theta\in J}$. In its turn, the latter implies that $\psi_n$'s are Herglotz vector fields in~$D$.

Note that the elements of the evolution families $(\omega^n_{\eta,\theta})$ associated with the Herglotz vector fields~$\psi_n$ are finite compositions of mappings from one-parameter semigroups whose infinitesimal generators belong to~$\Gen_0$. Therefore, $(\omega^n_{\eta,\theta})\subset \U$ and it only remains to show that for each ${(\eta,\theta)\in\Delta(J)}$ fixed,
$$
 \omega_{\eta,\theta}^n\to\omega_{\eta,\theta}~\text{~in $~\Hol(D,\C)$}\quad\text{ as $~n\to+\infty$.}
$$
Since self-maps of~$D$ form a normal family, it is sufficient to prove that $\omega_{\eta,\theta}^n\to\omega_{\eta,\theta}$ pointwise in~$D$.

We claim that for a.e.\,$\theta\in J$, $\psi_n(\cdot,\theta)\to \psi(\cdot,\theta)$ in $\Hol(D,\C)$ as ${n\to+\infty}$.
Indeed, consider a sequence $(z_k)\subset D$ having an accumulation point in~$D$. By compactness of~$\Gen_0$, it is only necessary to prove that for each $k\in\Natural$ and for a.e.\,$\theta\in J$, $\psi_n(z_k,\theta)\to \psi(z_k,\theta)$ as ${n\to+\infty}$. The latter holds for any Lebesgue point~$\theta$, i.e. for any point $\theta\in J$ at which the derivative of $F_k(\theta):=\int_0^\theta \psi(z_k,\eta)\di\eta$ exists and equals $\psi(z_k,\theta)$. Since the Lebesgue points have full measure, our claim is proved.

Fix now a compact set $K\subset D$. As we have just proved,
$$
 \delta^K_n(\theta):=\max_{z\in K}|\psi_n(z,\theta)-\psi(z,\theta)|\to 0\quad\text{as~$~n\to+\infty$}
$$
for a.e. $\theta\in J$. Since $\psi_n(\cdot,\theta)$ and $\psi(\cdot,\theta)$ belong to $\Gen_0$ for all $n\in\Natural$ and a.e.\,$\theta\in J$, we have
$0\le \delta^K_n(\theta)\le 2 C_K$ for a.e. $\theta\in J$.
Therefore, thanks to Lebesgue's Dominated Convergence Theorem, for each compact interval ${Y\subset J}$ and each function ${h:Y\to\C}$ of class $L^\infty$, we have
$$
\left|\int_Y h(\theta)\psi_n(z,\theta)\,\di\theta \,-\, \int_Y h(\theta)\psi(z,\theta)\,\di\theta\right|\le\int_Y|h(\theta)|\,\delta^K_n(\theta)\,\di\theta\,\to\,0~\text{~as~$n\to+\infty$}
$$
for all~$~z\in K$.
The fact that $\omega_{\eta,\theta}^n\to\omega_{\eta,\theta}$ as ${n\to+\infty}$ follows now from general results on ODEs, see e.g. \cite[Lemma~I.37 on p.\,38]{OliverDiss} or~\cite[Lemma~3.1]{IkkeiJAM}.
\end{proof}

\subsection{Bernstein generators}\label{SS_Bernstein-generators}
By a \textsl{Bernstein generator} we mean an infinitesimal generator of a one-parameter semigroup whose elements are Bernstein functions. In the notation introduced in Preliminaries, the class of all Bernstein generators is exactly~$\Gen(\BF)$. It is worth mentioning that in the literature on stochastic processes, see e.g. \cite[Sect.\,12.1]{Kyp14}, Bernstein generators appear in connection to continuous-state branching processes; in this probabilistic context, they are referred to as \textsl{branching mechanisms}.

\subsubsection{Statement of results} An integral representation of Bernstein generators was found in 1968 by Silverstein~\cite[Theorem 4]{Sil68}, see also \cite[Lemma~1.3]{KW1971}.
We establish an essentially equivalent way to characterize Bernstein generators.

\begin{theorem}\label{TH_BG}
A function $\phi\in\Hol(\UH,\C)$ is a Bernstein generator if and only if it satisfies the following three conditions:
\begin{itemize}
  \item[(i)] $\phi\big((0,+\infty)\big)\subset\Real$;
  \item[(ii)] the limit $\phi(0):=\lim_{\Real\ni x\to0^+} \phi(x)$ exists and belongs to $(-\infty,0]$;
  \item[(iii)] $\phi''$ is completely monotone in~$(0,+\infty)$.
\end{itemize}
\end{theorem}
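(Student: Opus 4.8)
The plan is to characterize $\Gen(\BF)$ by combining the Berkson\,--\,Porta type representation in $\UH$ (Remark~\ref{RM_BP-in-H}) with the analytic definition of a Bernstein function via complete monotonicity of the first derivative. The key observation is that a one-parameter semigroup $(v_t)$ in $\BF$ consists of Bernstein functions, hence by Theorem~\ref{TH_properties-of-BF}(C) each $v_t$ fixes no interior point of $\UH$ unless trivial, has $v_t(x)\to\alpha_t\ge0$ as $x\to0^+$, and $v_t'(\infty)=\beta_t\ge0$; moreover $v_t\big((0,+\infty)\big)\subset(0,+\infty)$. One should first show that the DW-point $\tau$ of such a semigroup lies in $\overline{(0,+\infty)}=[0,+\infty]$, i.e. $\tau\in\{0\}\cup(0,+\infty)\cup\{\infty\}$: this follows because $\BF$-maps preserve $(0,+\infty)$, so the common iterate limit $\tau$ of the $v_t$'s must be a limit of points in $(0,+\infty)$. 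Differentiating the semigroup property / using \eqref{EQ_generator} then gives $\phi\big((0,+\infty)\big)\subset\Real$, which is condition (i), and the existence and sign of $\phi(0)=\lim_{x\to0^+}\phi(x)$ comes from the fact that $\lim_{t\to0^+}\frac{x-v_t(x)}{t}$ at $x=0^+$ records $-\frac{d}{dt}\big|_{t=0^+}\alpha_t$, and $t\mapsto\alpha_t$ is itself a (sub)additive/monotone family because $v_s\circ v_t=v_{s+t}$ forces $\alpha_{s+t}\le\alpha_s$ — this yields condition (ii), $\phi(0)\in(-\infty,0]$.

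For the crucial condition (iii), I would argue as follows. Fix $x>0$ and consider $u(t,x):=v_t(x)$, which solves $\partial_t u = -\phi(u)$, $u(0,x)=x$, by \eqref{EQ_autonomous-LK}. For each fixed $t$, $x\mapsto u(t,x)$ is a Bernstein function, so $\partial_x u(t,x)$ is completely monotone in $x$ (Theorem~\ref{TH_properties-of-BF} / the remark after the definition of completely monotone functions). Now differentiate the ODE once in $x$: letting $p(t,x):=\partial_x u$, one gets $\partial_t p = -\phi'(u)\,p$, hence $p(t,x)=\exp\!\big(-\int_0^t\phi'(u(r,x))\,dr\big)$. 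Differentiating once more in $x$ and then analyzing the small-$t$ asymptotics — equivalently, expanding $v_t = \id - t\phi + o(t)$ and using that $v_t'$ is completely monotone for every $t>0$ — should translate ``$v_t'$ completely monotone for all small $t>0$'' into an infinitesimal statement about $\phi'$. Concretely, from $v_t'(x) = 1 - t\,\phi'(x) + o(t)$ and the fact that $v_t'$ is completely monotone, dividing by $t$ and letting $t\to0^+$ makes $-\phi'$ a limit of (completely monotone minus constant)/$t$; one must be careful, but the upshot is that $(-1)^k(\phi')^{(k)} \le 0$ in a suitable sense, i.e. $\phi'$ behaves like ``minus a Bernstein derivative''. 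Since $\phi'$ itself need not be completely monotone (indeed $\phi(x)\to\phi(0)\le0$ as $x\to0^+$ while a Bernstein function increases), the right statement is at the level of the \emph{second} derivative: $\phi'' = -(\text{something completely monotone})'\cdot(\cdots)$ — more precisely, writing $\phi = -\psi$ where $\psi$ would be ``Bernstein-like up to the value at $0$,'' one expects $\phi'' = -\psi''$ and $\psi'$ completely monotone gives $\psi'' = (\psi')'$ with $(-1)^k(\psi')^{(k)}\ge0$, so $\psi''\le0$ is not quite it either. The clean route: show directly that $-\phi'$ is the derivative of a Bernstein function, equivalently $-\phi' = c + (\text{completely monotone})$ is false in general, so instead show $\phi'' $ is completely monotone by noting $\phi'' = \lim_{t\to0^+}\frac{(\id-v_t)''}{t} = -\lim_{t\to0^+}\frac{v_t''}{t}$, and $-v_t'' $ is completely monotone (since $v_t'$ completely monotone $\Rightarrow$ $v_t''$ has alternating signs $(-1)^kv_t''^{(k)}\le0$, i.e. $-v_t''$ is completely monotone); a locally-uniform limit (Theorem~\ref{TH_properties-of-BF}(F), or rather its completely-monotone analogue, which follows the same way from pointwise limits preserving the sign inequalities) of $\frac{1}{t}(-v_t'')$, which is completely monotone for each $t$, is completely monotone. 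This gives (iii).

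For the converse, suppose $\phi\in\Hol(\UH,\C)$ satisfies (i)--(iii). First I would show $\phi\in\Gen\big(\Hol(\UH,\UH)\big)$, i.e. that $\phi$ is a semicomplete vector field: from (iii), $\phi'$ is (up to sign and an additive constant) of the form $\phi'(x) = \phi'(1) - \int_1^x(\text{completely monotone})$, and integrating twice using (ii) to fix the constant of integration at $0^+$, one recovers that $-\phi$ restricted to $(0,+\infty)$ has the structure $-\phi(x) = -\phi(0) + (\text{Bernstein-type growth})$; checking the Berkson\,--\,Porta condition \eqref{EQ_Berkson-Porta-UH} amounts to verifying $\Re\big(\phi(\zeta)/(\cdots)\big)$ has the correct sign, which should follow from the integral representation that (iii) forces on $\phi''$ (a completely monotone function on $(0,+\infty)$ is the Laplace transform of a positive measure, and integrating twice produces exactly a Silverstein-type kernel). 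Then one must check the generated semigroup $(v_t)$ lands in $\BF$: the natural argument is to solve $\partial_t v_t = -\phi(v_t)$ and show each $v_t$ is Bernstein by verifying $v_t'$ is completely monotone — differentiate in $x$ to get $\partial_t(\partial_x v_t) = -\phi'(v_t)\partial_x v_t$, so $\partial_x v_t = \exp(-\int_0^t\phi'(v_r)dr)$, and one needs this to be completely monotone in $x$; since $\phi'$ is ``$-$(derivative of Bernstein)'' by (iii) and $v_r$ preserves $(0,+\infty)$, Fa\`a di Bruno / Remark~\ref{RM_Faa-di-Bruno}-type arguments and the closure of completely monotone functions under products and under composition with functions having Bernstein-type derivatives (Remarks~\ref{RM_Faa-di-Bruno},~\ref{RM_composition}) should close the loop. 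Finally, the equivalence with Silverstein's representation is recorded by writing out that (iii) says $\phi''(x) = \int_{(0,+\infty)} e^{-x\lambda}\,\mu(d\lambda)$ for a positive measure $\mu$, integrating twice and absorbing the constants via (i) and (ii).

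\textbf{Main obstacle.} The delicate point is the passage ``$v_t'$ completely monotone for all small $t>0$'' $\Longleftrightarrow$ ``$\phi''$ completely monotone,'' i.e. transferring an \emph{exact} structural property of every element $v_t$ to an \emph{infinitesimal} property of the generator, and back. The forward direction needs a completely-monotone analogue of Theorem~\ref{TH_properties-of-BF}(F) applied to the difference quotients $\frac{1}{t}(-v_t'')$ together with uniform control (the convergence $\frac{\id-v_t}{t}\to\phi$ is only locally uniform, so term-by-term differentiation is fine, but one must still argue the limit of completely monotone functions is completely monotone — which is true but should be stated cleanly, perhaps via the Bernstein--Hausdorff--Widder theorem). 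The converse direction's obstacle is verifying semicompleteness and $\BF$-invariance of the generated flow without circularity; here the cleanest tool is to first establish $\Gen(\BF)$ is a closed convex cone (Corollary~\ref{CR_BernsteinGen-conv-cone}) and that conditions (i)--(iii) also define a closed convex cone, then identify the two cones on a dense/explicit subfamily (e.g. $\phi(x) = a + bx$ with $a\le0$, $b$ arbitrary sign, and $\phi(x) = c(1-e^{-\lambda x})$-type, whose generated semigroups are computable and manifestly Bernstein), invoking a Choquet/extreme-ray argument or the Trotter product formula to get all of the cone.
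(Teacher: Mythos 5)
Your ``only if'' direction is essentially the paper's argument and is sound: locally uniform convergence of $(\id-v_t)/t$ to $\phi$ gives convergence of all derivatives, and since $(\id)^{(n)}=0$ for $n\ge2$ the sign conditions $(-1)^{n-1}(v_t)^{(n)}\ge0$ pass to the limit and yield complete monotonicity of $\phi''$ directly --- there is no delicate ``exact-to-infinitesimal'' transfer to worry about. Your treatment of (ii), however, is shaky: the inequality $\alpha_{s+t}\le\alpha_s$ is reversed (since $v_s$ is increasing on $(0,+\infty)$ and $\alpha_t\ge0$, one gets $\alpha_{s+t}=v_s(\alpha_t)\ge\alpha_s$), and the interchange of the limits $x\to0^+$ and $t\to0^+$ is not justified. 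The paper instead gets \emph{existence} of $\phi(0)\in(-\infty,+\infty]$ from convexity ($\phi''\ge0$, already established) and rules out $\phi(0)>0$ by observing that the trajectory of $\dot w=-\phi(w)$ started at small $x>0$ would then exit $\UH$ in finite time, contradicting semicompleteness.

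The genuine gap is in the converse. Your primary route --- writing $\partial_x v_t=\exp\bigl(-\int_0^t\phi'(v_r)\,\di r\bigr)$ and invoking the product/composition rules for complete monotonicity --- is circular: to conclude that $x\mapsto\phi'(v_r(x))$ has the right monotonicity structure you must already know that each $v_r$ is a Bernstein function, which is the statement being proved. Your fallback route (identify two closed convex cones on generating elements) is indeed the paper's strategy, but as stated it is missing its main ingredient. The cone generated by $-q$, $az$, $bz^2$ and finite sums of kernels is not all of the cone defined by (i)--(iii); one must handle general completely monotone $\phi''$, i.e.\ arbitrary measures with $\int\min\{\lambda^2,1\}\,\pi(\di\lambda)<\infty$ (possibly with $\phi'(\infty)=+\infty$), and the passage to the closure requires a concrete approximation. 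The paper supplies this via Proposition~\ref{PR_Bernstein-func-is-BG} ($f\in\BF\Rightarrow-f\in\Gen(\BF)$, proved by Euler approximation together with Abel's equation and the Koenigs function --- this is the real work of the converse), the elementary generators of Lemma~\ref{LM_BG-elementary}, Corollary~\ref{CR_BernsteinGen-conv-cone}, and then a two-step reduction: first subtract $\phi''(\infty)z^2/2$, then approximate the remainder by $\phi_1^\varepsilon\circ T_\varepsilon+\varphi'(1/\varepsilon)\id$ with $T_\varepsilon(z)=z/(1+\varepsilon z)$ so that each approximant decomposes as $-(\text{Bernstein})+(\text{linear})$. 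None of this is automatic from ``Choquet/extreme rays,'' and without Proposition~\ref{PR_Bernstein-func-is-BG} (or an equivalent worked-out Riemann-sum approximation of the Silverstein kernel with uniform control near $\lambda=0$ and $\lambda=\infty$) your converse does not close.
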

Following Kyprianou~\cite[Theorem~12.1 on p.\,336]{Kyp14}, Silverstein's representation can be written as follows. A function $\phi\colon \UH\to\C$ is a Bernstein generator if and only if there exist $a \in \R$, $q, b \ge0$ and a Borel non-negative measure $\pi$ on $(0,+\infty)$ with $\int_0^{+\infty} \min\{x^2,1\} \,\pi(\di x)<+\infty$ such that
\begin{equation}\label{EQ_Silver-representation}
\phi(\zeta) = -q + a \zeta + b \zeta^2 + \int_0^{+\infty} \!\!\big(e^{-\zeta x} - 1 + \zeta x\,\ind_{(0,1)}(x)\big) \,\pi(\di x) \quad  \text{for all~}~\zeta\in \UH
\end{equation}
(where we have extended formula~\cite[(12.7)]{Kyp14} holomorphically from ${(0,+\infty)}$ to~$\UH$).

If $\phi$ is given by~\eqref{EQ_Silver-representation}, then clearly $\phi\in\Hol(\UH,\C)$ and satisfies conditions (i) and~(ii) in Theorem~\ref{TH_BG}. Moreover,~\eqref{EQ_Silver-representation} implies that $\phi''$ is the Laplace transform of a non-negative Borel measure $\mu$ on ${[0,+\infty)}$ given by
\begin{equation}\label{EQ_measure-mu}
\mu(\{0\}):=2b,\quad \mu|_{(0,+\infty)}(\di x):=x^2\pi(\di x).
\end{equation}
By Bernstein's Theorem, see e.g. \cite[Theorem~1.4 on p.\,3]{SSV12}, $\phi''$ is completely monotone on ${(0,+\infty)}$, i.e. condition~(iii) is also satisfies.

Conversely, suppose $\phi\in\Hol(\UH,\C)$ and conditions (i)\,--\,(iii) hold. Then again by Bernstein's Theorem, $\phi''$ is the Laplace transform of some non-negative $\sigma$-finite Borel measure $\mu$ on ${[0,+\infty)}$. Recovering $\phi'$ and $\phi$ with the help of the formulas $\phi'(x)=\phi'(1)+\int_1^x\phi''(y)\di y$, $\phi(x):=\phi(0)+\int_0^x\phi'(y)\di y$ for all $x>0$, and taking into account that by (ii) the integral $\int_0^1\phi'(y)\di y$ converges, we see that:
\begin{itemize}
 \item[(a)] the Borel measure $\pi$ on $(0,+\infty)$ defined by $\pi(\di\lambda):=\lambda^{-2}\mu|_{(0,+\infty)}(\di\lambda)$ satisfies  $\int_{0}^{+\infty}\min\{\lambda^2,1\}\,\pi(\di\lambda)<+\infty$;\smallskip
 \item[(b)] representation~\eqref{EQ_Silver-representation} holds for all $\zeta\in(0,+\infty)$~--- and hence for all~$\zeta\in\UH$~--- with the measure~$\pi$ defined in~(a), ${q:=-\phi(0)}$, ${b:=\mu(\{0\})/2}$,
     $$
        a:=\phi'(1)-\mu(\{0\})+\int_{(0,1)}\!\lambda(e^{-\lambda}-1)\,\pi(\di\lambda)+\int_{[1,+\infty)}\!\lambda e^{-\lambda}\,\pi(\di\lambda).
     $$
\end{itemize}

\begin{remark}
With the help of the monotone convergence theorem, it is easy to see that in the above notation, $$\phi''(+\infty):=\lim_{x\to+\infty}\phi''(x)=\mu(\{0\})=2b.$$
\end{remark}

In this section we will present a complex-analytic proof of Theorem~\ref{TH_BG} independent from representation~\eqref{EQ_Silver-representation}. On the other hand, with the help of the above computation,
\eqref{EQ_Silver-representation} can be adapted for the case of a BRFP at $z:=0$.
\begin{corollary}\label{CR_BG-BRFP0}
A function $\phi\colon \UH\to\C$ is a Bernstein generator associated with a one-parameter semigroup $(v^\phi_t)$ having a BRFP at ${\sigma:=0}$ if and only if $\phi$ admits the representation
\begin{equation}\label{EQ_LeGall-representation}
\phi(\zeta)= c\zeta + b\zeta^2 + \int_0^{+\infty} \!\!\big(e^{-\zeta x} - 1 + \zeta x\big) \,\pi(\di x) \quad  \text{for all~}~\zeta\in \UH,
\end{equation}
where $c\in\Real$, $b\ge0$ and $\pi$ is a non-negative Borel measure on~$(0,+\infty)$ such that
\begin{equation}\label{EQ_BG-BRFP0-int-cond}
\int_{0}^{+\infty}\min\{\lambda^2,\lambda\}\,\pi(\di\lambda)\,<\,+\infty.
\end{equation}

Moreover, if the above necessary and sufficient condition is satisfied, then
\begin{equation}\label{BG-BRFP0}
c=\phi'(0):=\lim_{x\to 0^+}\phi'(x)
\end{equation}
 and $(v_t^\phi)'(0)=e^{-c\,t}$ for all ${t\ge0}$. In particular, ${\sigma=0}$ is the Denjoy\,--\,Wolff point of~$(v_t^\phi)$ if and only if $\phi$ admits representation~\eqref{EQ_LeGall-representation} with ${c\ge0}$.
\end{corollary}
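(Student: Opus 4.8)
The strategy is to read off Corollary~\ref{CR_BG-BRFP0} from Theorem~\ref{TH_BG}, the Silverstein representation~\eqref{EQ_Silver-representation}, and the computation carried out just before the corollary's statement, by translating the extra hypothesis — that the one‑parameter semigroup $(v^\phi_t)$ associated with the Bernstein generator $\phi$ has a boundary regular fixed point at $0$ — into a condition on the data $(a,q,b,\pi)$ in~\eqref{EQ_Silver-representation}.

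First I would pin down what that hypothesis means analytically. The maps $v^\phi_t\neq\id_\UH$ share the same boundary regular fixed points, so $0$ is one for the semigroup iff it is one for some (equivalently, every) $v^\phi_t\neq\id_\UH$, which by the classical criterion recalled in Section~\ref{sec:BRFP} happens iff $v^\phi_t(x)\to0$ as $x\to0^+$ and $\lim_{x\to0^+}(v^\phi_t)'(x)$ is finite; in the language of the generator — the half‑plane analogues of Remarks~\ref{RM_BRFP-semigroups} and~\ref{RM_critical-via-radial-limits}, legitimate since at the finite boundary point $0$ all the relevant definitions carry over verbatim — this is equivalent to the finiteness of $\lambda:=\anglim_{\zeta\to0}\phi(\zeta)/\zeta$, equivalently of the radial limit $\phi'(0):=\lim_{x\to0^+}\phi'(x)$ together with $\lim_{x\to0^+}\phi(x)=0$, and then $\lambda=\phi'(0)$ and $(v^\phi_t)'(0)=e^{-\lambda t}$. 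By conditions~(ii)--(iii) of Theorem~\ref{TH_BG}, for every Bernstein generator both $\phi(0):=\lim_{x\to0^+}\phi(x)$ and $\phi'(0):=\lim_{x\to0^+}\phi'(x)\in[-\infty,+\infty)$ exist (the latter because $\phi''\ge0$, so $\phi'$ is non‑decreasing). Hence, in the notation of~\eqref{EQ_Silver-representation}, the phrase ``$0$ is a boundary regular fixed point of $(v^\phi_t)$'' is exactly ``$q=0$ and $\phi'(0)>-\infty$''.

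The second, computational step is to express $\phi'(0)$ through $(a,b,\pi)$. Differentiating~\eqref{EQ_Silver-representation} term by term on $(0,+\infty)$ — the interchange of derivative and integral being justified, as for any Laplace‑type transform, by $\int_0^{+\infty}\min\{\lambda^2,1\}\,\pi(\di\lambda)<+\infty$ — gives
\[
\phi'(x)=a+2bx-\int_0^{+\infty}\lambda\big(e^{-x\lambda}-\ind_{(0,1)}(\lambda)\big)\,\pi(\di\lambda),\qquad x>0.
\]
Letting $x\to0^+$ and splitting the integral at $\lambda=1$: on $(0,1)$ the integrand $\lambda(1-e^{-x\lambda})$ decreases to $0$ and is dominated by $\lambda^2$, so that part tends to $0$ by dominated convergence; on $[1,+\infty)$ the integrand $-\lambda e^{-x\lambda}$ decreases monotonically to $-\lambda$, so that part tends to $-\int_{[1,+\infty)}\lambda\,\pi(\di\lambda)\in[-\infty,0)$ by monotone convergence. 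Thus $\phi'(0)=a-\int_{[1,+\infty)}\lambda\,\pi(\di\lambda)$, finite precisely when $\int_{[1,+\infty)}\lambda\,\pi(\di\lambda)<+\infty$. Combined with the first step: a Bernstein generator $\phi$ has a boundary regular fixed point at $0$ iff $q=0$ and $\int_{[1,+\infty)}\lambda\,\pi(\di\lambda)<+\infty$, and since already $\int_{(0,1)}\lambda^2\,\pi<+\infty$, this is exactly~\eqref{EQ_BG-BRFP0-int-cond}. Setting $c:=\phi'(0)=a-\int_{[1,+\infty)}\lambda\,\pi(\di\lambda)$ and regrouping the terms linear in $\zeta$ turns~\eqref{EQ_Silver-representation} with $q=0$ into~\eqref{EQ_LeGall-representation} (all integrals converging thanks to~\eqref{EQ_BG-BRFP0-int-cond}), which gives necessity; sufficiency is the same manipulation run backwards — given~\eqref{EQ_LeGall-representation} with~\eqref{EQ_BG-BRFP0-int-cond}, put $a:=c+\int_{[1,+\infty)}\lambda\,\pi(\di\lambda)$, $q:=0$, observe that $\pi$ then satisfies $\int_0^{+\infty}\min\{\lambda^2,1\}\,\pi(\di\lambda)<+\infty$ (as $\pi([1,+\infty))\le\int_{[1,+\infty)}\lambda\,\pi$), so~\eqref{EQ_Silver-representation} exhibits $\phi$ as a Bernstein generator, and $\phi(0)=-q=0$, $\phi'(0)=c$ finite, so $0$ is a boundary regular fixed point. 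The identities $c=\phi'(0)$ and $(v^\phi_t)'(0)=e^{-ct}$ in~\eqref{BG-BRFP0} have already been obtained along the way.

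It remains to address the Denjoy\,--\,Wolff point. If $0$ is the Denjoy\,--\,Wolff point of $(v^\phi_t)$, then, being a boundary point different from $\infty$, it satisfies $(v^\phi_t)'(0)\le1$ for every $t$, i.e.\ $e^{-ct}\le1$ for all $t\ge0$, forcing $c\ge0$; conversely, if $c\ge0$ then $(v^\phi_t)'(0)=e^{-ct}\le1$ for all $t$, and since a holomorphic self‑map of $\UH$ other than the identity has at most one boundary regular fixed point with derivative $\le1$, which is then its Denjoy\,--\,Wolff point (a standard consequence of the Julia\,--\,Wolff\,--\,Carath\'eodory theory; cf.\ Section~\ref{sec:BRFP}), the point $0$ is the Denjoy\,--\,Wolff point of each $v^\phi_t\neq\id_\UH$, hence of the semigroup. (Alternatively, one can check this directly from the Berkson\,--\,Porta representation~\eqref{EQ_Berkson-Porta-UH} of $\phi$ in each case for the location of the Denjoy\,--\,Wolff point $\tau$, using that $\Re P\ge0$ together with $P$ being real on $(0,+\infty)$ forces $\lim_{x\to0^+}P(x)/x\ge0$, with strict inequality unless $\phi\equiv0$.) Overall there is no deep difficulty here — the result is essentially a repackaging of Theorem~\ref{TH_BG} — and the only points needing genuine care are the rigorous transfer of Remarks~\ref{RM_BRFP-semigroups} and~\ref{RM_critical-via-radial-limits} to the half‑plane at $0$, the separate treatment of the two ranges of $\lambda$ in the limit passage defining $\phi'(0)$, and this last equivalence for the Denjoy\,--\,Wolff point, which is the one non‑bookkeeping step.
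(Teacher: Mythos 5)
Your argument is correct and follows essentially the same route as the paper's proof: both characterize the BRFP at $0$ by the conditions $\phi(0)=0$ and $\phi'(0):=\lim_{x\to0^+}\phi'(x)$ finite (via the transfer of Remarks~\ref{RM_BRFP-semigroups} and~\ref{RM_critical-via-radial-limits} to~$\UH$), translate the finiteness of $\phi'(0)$ into the integrability condition~\eqref{EQ_BG-BRFP0-int-cond}, and then regroup the linear terms to pass between~\eqref{EQ_Silver-representation} and~\eqref{EQ_LeGall-representation}. The only (immaterial) difference is that the paper re-derives the formula for $\phi'$ by integrating the Laplace transform of $\phi''$ with Tonelli's theorem, whereas you differentiate Silverstein's formula and take the limit $x\to0^+$ with monotone/dominated convergence --- the same computation run in the opposite direction.
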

Note that the Bernstein generator~$\phi$ in  Corollary~\ref{CR_BG-BRFP0} admits both representations~\eqref{EQ_Silver-representation} and~\eqref{EQ_LeGall-representation}. As we will see from the proof, the coefficient~$b$ and the measure~$\pi$ in the two representations are the same.

As a representation of a branching mechanism~$\phi$, formula~\eqref{EQ_LeGall-representation} appeared in~\cite[Theorem~1, pp.\,21-22]{LeGall}, \cite[Section~3.1]{Li11}, \cite[Section 2.2]{Li12}.
In the probabilistic terms, $(v^\phi_t)$ has a BRFP at~$0$ if and only if the corresponding continuous-state branching process has finite means, see \cite[Section~12.2.2]{Kyp14}.

\medskip

In a similar way, the case of a BRFP at~$\infty$ can be treated.
\begin{corollary}\label{CR_BG-BRFP-infty}
A function $\phi\colon \UH\to\C$ is a Bernstein generator associated with a one-parameter semigroup $(v^\phi_t)$ having a BRFP at ${\sigma:=\infty}$ if and only if $\phi$ admits the representation
\begin{equation}\label{EQ_BG-BRFP-infty-representation}
\phi(\zeta)= -q + c\zeta  - \int_0^{+\infty} \!\!\big(1-e^{-\zeta x}\big) \,\pi(\di x) \quad  \text{for all~}~\zeta\in \UH,
\end{equation}
where $q\ge0$, $c\in\Real$, and $\pi$ is a non-negative Borel measure on~$(0,+\infty)$ such that
\begin{equation*}
\int_{0}^{+\infty}\min\{\lambda,1\}\,\pi(\di\lambda)\,<\,+\infty.
\end{equation*}

Moreover, if the above necessary and sufficient condition is satisfied, then
\begin{equation*}
c=\phi'(\infty):=\anglim_{z\to\infty}\frac{\phi(z)}{z}=\lim_{x\to +\infty}\phi'(x)
\end{equation*}
 and $(v_t^\phi)'(\infty)=e^{-c\,t}$ for all ${t\ge0}$. In particular, ${\sigma=\infty}$ is the Denjoy\,--\,Wolff point of~$(v_t^\phi)$ if and only if $\phi$ admits representation~\eqref{EQ_BG-BRFP-infty-representation} with ${c\le0}$.
\end{corollary}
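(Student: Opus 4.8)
The plan is to deduce this corollary from Theorem~\ref{TH_BG} (characterizing Bernstein generators via (i)--(iii)) combined with Remark~\ref{RM_BRFP-semigroups} (adapted to~$\UH$ via the Cayley transform) and the computation given just after the statement of Corollary~\ref{CR_BG-BRFP0}, which already identified the measure $\mu$ on $[0,+\infty)$ whose Laplace transform is $\phi''$. Concretely, first I would start from a Bernstein generator $\phi$, so by Theorem~\ref{TH_BG} we have $\phi((0,+\infty))\subset\R$, the finite limit $\phi(0)=:-q\le 0$, and $\phi''$ completely monotone; by Bernstein's theorem write $\phi''(x)=\int_{[0,+\infty)}e^{-\lambda x}\,\mu(\di\lambda)$ for a non-negative Borel measure~$\mu$. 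Then I would translate the condition ``$(v_t^\phi)$ has a BRFP at $\infty$'' into an analytic condition on~$\phi$: by the discussion in Section~\ref{sec:BRFP} (the remark after Theorem~\ref{TH_Julia-half-plane}, together with Remark~\ref{RM_BRFP-semigroups} transported to~$\UH$), $\infty$ is a BRFP of the semigroup iff the limit $c:=\lim_{x\to+\infty}\phi'(x)$ exists finitely, in which case $(v_t^\phi)'(\infty)=e^{-ct}$ and $\sigma=\infty$ is the DW-point precisely when $c\le 0$ (the DW-point at $\infty$ requires $v_t'(\infty)\ge 1$).

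The heart of the matter is then to show that the finiteness of $\phi'(+\infty)$ is equivalent to the integrability condition $\int_0^{+\infty}\min\{\lambda,1\}\,\pi(\di\lambda)<+\infty$ together with the representation~\eqref{EQ_BG-BRFP-infty-representation}. For this I would integrate the Laplace representation of $\phi''$: formally $\phi'(x)=\phi'(1)+\int_1^x\phi''(y)\,\di y$, and computing $\int_1^x e^{-\lambda y}\,\di y = (e^{-\lambda}-e^{-\lambda x})/\lambda$ for $\lambda>0$ (and $=x-1$ for $\lambda=0$) and interchanging integrals via Tonelli, one sees that $\lim_{x\to+\infty}\phi'(x)$ is finite iff $\mu(\{0\})=0$ (otherwise the $\lambda=0$ contribution $\mu(\{0\})(x-1)$ diverges) and $\int_{(0,+\infty)}\lambda^{-1}\,\mu(\di\lambda)<+\infty$. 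Setting $\pi(\di\lambda):=\lambda^{-2}\mu(\di\lambda)$ on $(0,+\infty)$, the latter reads $\int_{(0,+\infty)}\lambda\,\pi(\di\lambda)<+\infty$, while the already-known condition $\int\min\{\lambda^2,1\}\pi(\di\lambda)<+\infty$ from the post-Corollary~\ref{CR_BG-BRFP0} computation supplies $\int_{[1,+\infty)}\pi(\di\lambda)<+\infty$; combined these give exactly $\int_0^{+\infty}\min\{\lambda,1\}\,\pi(\di\lambda)<+\infty$. I would then substitute back into $\phi(x)=\phi(0)+\int_0^x\phi'(y)\,\di y$ and carry out the same Tonelli interchange once more, with $\int_0^x\big(e^{-\lambda y}-\ldots\big)\di y$ producing exactly the terms $-q$, the linear term $c\zeta$, and $-\int_0^{+\infty}(1-e^{-\zeta x})\,\pi(\di x)$; finally holomorphic extension from $(0,+\infty)$ to $\UH$ (legitimate since both sides are holomorphic on $\UH$ and $\Re\phi\ge0$ is not even needed, only agreement on an interval) gives~\eqref{EQ_BG-BRFP-infty-representation}. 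The converse direction is the easy one: if $\phi$ is given by~\eqref{EQ_BG-BRFP-infty-representation} with the stated integrability, one checks directly that $\phi$ satisfies (i)--(iii) of Theorem~\ref{TH_BG} (so it is a Bernstein generator: $\phi''(x)=\int x^2 e^{-\lambda x}\pi(\di\lambda)$ is completely monotone, $\phi(0)=-q\le 0$), and that $\phi'(x)=c+\int_0^{+\infty}x e^{-\lambda x}\pi(\di\lambda)\to c$ by dominated convergence using $\min\{\lambda,1\}$-integrability, so $\infty$ is a BRFP with the asserted derivative.

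The main obstacle I anticipate is bookkeeping rather than conceptual: making the two successive Tonelli interchanges rigorous requires knowing a priori that the relevant integrals converge, which is a bit circular — one wants to split each integral into $\lambda\in(0,1)$ and $\lambda\ge 1$ pieces and handle the boundary-case $\lambda=0$ separately, being careful that $\int_0^1\phi'(y)\,\di y$ converges (this is where condition~(ii), i.e. the finite limit $\phi(0)$, is used, exactly as in the Corollary~\ref{CR_BG-BRFP0} computation). A secondary point to get right is the precise relation between $c=\phi'(\infty)$ in the Carath\'eodory sense $\anglim_{z\to\infty}\phi(z)/z$ and $\lim_{x\to+\infty}\phi'(x)$; these agree here because from~\eqref{EQ_BG-BRFP-infty-representation} one has $\phi(\zeta)/\zeta\to c$ radially and, $\phi$ being a generator in $\UH$, Remark~\ref{RM_critical-via-radial-limits} (transported to $\UH$) upgrades the radial statement to the angular one, so no extra work beyond invoking the cited facts is needed.
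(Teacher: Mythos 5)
Your proposal follows the same route as the paper: the paper's own proof of this corollary is deliberately a two-sentence sketch (translate ``BRFP at $\infty$'' into finiteness of $\lim_{x\to+\infty}\phi'(x)$ using $\phi''\ge0$ and the remarks of Section~\ref{sec:BRFP}, then repeat the Tonelli computation from the proof of Corollary~\ref{CR_BG-BRFP0} with the integration of $\phi''$ based at $x=1$ instead of $x=0$), and your write-up supplies exactly the omitted details, including the correct treatment of $\phi'(\infty)$ as both a radial and an angular limit.

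One intermediate claim is stated incorrectly, although the final conclusion survives. Since $\int_1^x e^{-\lambda y}\,\di y=(e^{-\lambda}-e^{-\lambda x})/\lambda$, finiteness of $\lim_{x\to+\infty}\phi'(x)$ is equivalent to $\mu(\{0\})=0$ together with $\int_{(0,+\infty)}\lambda^{-1}e^{-\lambda}\,\mu(\di\lambda)<+\infty$, \emph{not} with $\int_{(0,+\infty)}\lambda^{-1}\,\mu(\di\lambda)<+\infty$ as you assert. The two conditions differ on the tail: for $\pi(\di\lambda)=\ind_{[1,+\infty)}(\lambda)\,\lambda^{-2}\,\di\lambda$ one gets a perfectly good Bernstein generator with a BRFP at $\infty$ (indeed $\int_0^{+\infty}\min\{\lambda,1\}\,\pi(\di\lambda)=1$), yet $\int_{(0,+\infty)}\lambda^{-1}\mu(\di\lambda)=\int_1^{+\infty}\lambda^{-1}\di\lambda=+\infty$, so your ``iff'' fails in the only-if direction. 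The slip is harmless for the corollary because the target condition $\int_0^{+\infty}\min\{\lambda,1\}\,\pi(\di\lambda)<+\infty$ tests $\lambda\,\pi(\di\lambda)$ only on $(0,1)$, where the weight $e^{-\lambda}$ is bounded above and below, while on $[1,+\infty)$ it reduces to $\int_{[1,+\infty)}\pi(\di\lambda)<+\infty$, which, as you correctly note, holds automatically for every Bernstein generator by the computation following Theorem~\ref{TH_BG}. Replace your unweighted condition by the weighted one and the argument is complete; the remaining steps (the converse via dominated convergence, the identification $c=\phi'(\infty)$, the formula $(v_t^\phi)'(\infty)=e^{-ct}$, and the DW-point criterion $c\le0$) are fine, modulo a couple of typographical sign/variable swaps in the converse paragraph.
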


The representation~\eqref{EQ_BG-BRFP-infty-representation} with ${c\le0}$ has a probabilistic interpretation that the corresponding branching process is exactly a process with non-decreasing sample paths because it can be expressed as a randomly time-changed subordinator (with killing) according to the Lamperti transformation (see e.g.\ \cite[Theorem 12.2]{Kyp14}).

\begin{remark}
According to Corollary~\ref{CR_BG-BRFP0}, a function $\phi$ given by representation~\eqref{EQ_LeGall-representation} with $c\ge0$ is an infinitesimal generator of a one-parameter semigroup with the DW-point ${\tau=0}$. It is therefore interesting to compare~\eqref{EQ_LeGall-representation} with the Berkson\,--\,Porta formula~\eqref{EQ_BP_H-finite}: $\phi(\zeta)=\zeta^2P(\zeta)$, $\zeta\in\UH$, where $P$ is the holomorphic function with non-negative real part given by
$$
  P(\zeta)~=~c\,\zeta^{-1}~+\int\limits_{[0,+\infty)}P_*(x\zeta)\,\mu(\di x),\quad P_*(z):=\frac{e^{-z}-1+z}{z^2}\in\UH,\qquad\text{for all~$~\zeta\in\UH$,}
$$
with the Borel measure $\mu$ defined by~\eqref{EQ_measure-mu}.
It is worth mentioning that in the case of arbitrary~$c\in\Real$, the above representation of~$\phi$ corresponds to the extension of the Berkson\,--\,Porta formula due to Bracci, Contreras and D\'\i{}az-Madrigal \cite[Theorem~1.3]{BCD-AK-measure}.
\end{remark}
A similar observation applies to Bernstein generators admitting representation~\eqref{EQ_BG-BRFP-infty-representation}.

\subsubsection{Auxiliary statements and proofs}

\begin{lemma}\label{LM_BG-elementary}
For any $a\in\Real$, $b,q\ge0$, the functions $\phi_1(z):=az$, $\phi_2(z):=bz^2$, and $\phi_3(z):=-q$ are Bernstein generators.
\end{lemma}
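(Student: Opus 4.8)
The plan is to produce, for each of $\phi_1,\phi_2,\phi_3$, an explicit one-parameter semigroup lying in $\BF$ and having the function in question as its infinitesimal generator. By the converse statement recalled right after~\eqref{EQ_autonomous-LK}, it is enough to solve the autonomous Loewner\,--\,Kufarev equation $\di w/\di t=-\phi_j(w)$, $w(0)=z$, to verify that for every $z\in\UH$ the (necessarily unique, since $\phi_j$ is holomorphic and hence locally Lipschitz) solution $w_z(t)$ is defined for all $t\ge0$ and stays in $\UH$, and then to check that the resulting self-maps $v^{(j)}_t(z):=w_z(t)$ of $\UH$ restrict on $(0,+\infty)$ to Bernstein functions that are not identically zero, i.e.\ that $v^{(j)}_t\in\BF$ for all $t\ge0$; this gives $\phi_j\in\Gen(\BF)$, which is the class of Bernstein generators.

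For $\phi_1(z)=az$ one finds $v^{(1)}_t(z)=e^{-at}z$, and for $\phi_3(z)=-q$ one finds $v^{(3)}_t(z)=z+qt$; both are entire, defined for all $t\ge0$, and carry $\UH$ into $\UH$ since $e^{-at}>0$ and $\Re(z+qt)=\Re z+qt>0$. On $(0,+\infty)$ the functions $x\mapsto e^{-at}x$ and $x\mapsto x+qt$ are non-negative and have, respectively, the non-negative constants $e^{-at}$ and $1$ as first derivatives; a non-negative constant is completely monotone, so by the description of Bernstein functions through completely monotone derivatives recalled in Section~\ref{SS_Bernstein}, both are Bernstein functions, and evidently $\not\equiv0$. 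Hence $\phi_1,\phi_3\in\Gen(\BF)$.

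The case $\phi_2(z)=bz^2$ is trivial for $b=0$ (then $v^{(2)}_t=\id_{\UH}$), so let $b>0$. Solving $\di w/\di t=-bw^2$, $w(0)=z$, gives $v^{(2)}_t(z)=z/(1+btz)$. Because $\Re(1+btz)=1+bt\,\Re z\ge1$, the denominator never vanishes, so $v^{(2)}_t$ is holomorphic on all of $\UH$ for each $t\ge0$; and from $1/v^{(2)}_t(z)=bt+1/z$ one gets $\Re\bigl(1/v^{(2)}_t(z)\bigr)=bt+\Re z/|z|^2>0$, so $\Re v^{(2)}_t(z)>0$ and $v^{(2)}_t\in\Hol(\UH,\UH)$. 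Finally, for $c:=bt>0$ the function $f(x):=x/(1+cx)=c^{-1}-c^{-1}(1+cx)^{-1}$ on $(0,+\infty)$ satisfies $f(x)\ge0$ and $(-1)^{n+1}f^{(n)}(x)=n!\,c^{\,n-1}(1+cx)^{-n-1}\ge0$ for every $n\ge1$, which is precisely the defining property of a Bernstein function; clearly $f\not\equiv0$. Thus $v^{(2)}_t\in\BF$ for all $t\ge0$, and $\phi_2\in\Gen(\BF)$.

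No genuine difficulty arises here: the only step requiring a computation is that $x\mapsto x/(1+cx)$ is a Bernstein function, which is classical and follows either from the elementary sign computation above, or from the observation that its derivative $(1+cx)^{-2}$ is the square of the completely monotone function $(1+cx)^{-1}$ (products of completely monotone functions being completely monotone, by the remark in Section~\ref{SS_Bernstein}), together with the fact that a non-negative function with completely monotone derivative is a Bernstein function.
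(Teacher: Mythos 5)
Your proposal is correct and follows the same route as the paper: explicitly integrate the autonomous equation to get $v_t^{\phi_1}(z)=e^{-at}z$, $v_t^{\phi_2}(z)=z/(1+btz)$, $v_t^{\phi_3}(z)=z+qt$, and check these are Bernstein functions. The paper states this in two lines; your added verifications (denominator nonvanishing, the sign computation for $x/(1+cx)$) are just the details it leaves implicit.
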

\begin{proof}
By explicit integration of the equations $\di w/\di t+\phi_j(w(t))=0$, ${j=1,2,3}$, with the initial condition~$w(0)=z\in\UH$, we obtain $v_t^{\phi_1}(z)=e^{-at}z$, $v_t^{\phi_2}(z)=z/(1+btz)$, and $v_t^{\phi_3}(z)=z+qt$. These are Bernstein functions for all~$t\ge0$.
\end{proof}

\begin{proposition}
\label{PR_Bernstein-func-is-BG}
If $f\in\BF$, then $\phi:=-f\in\Gen(\BF)$.
\end{proposition}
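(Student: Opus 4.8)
The plan is to realize the one-parameter semigroup generated by $\phi:=-f$ as a locally uniform limit of compositions of Bernstein functions, and then to invoke the fact that $\BF$ is topologically closed. The whole point of proceeding this way is to bypass any attempt to integrate the flow of $-f$ explicitly, which in general is intractable.

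First I would check that \emph{adding the identity preserves $\BF$}: for every $s>0$ the map $g_s:=\id_\UH+sf$ belongs to $\BF$. Indeed, on $(0,+\infty)$ we have $g_s(x)=x+sf(x)\ge x>0$, so $g_s$ is non-negative and not identically zero, while $g_s'=1+sf'$ is completely monotone on $(0,+\infty)$, being the sum of the constant $1$ and of $sf'$ (the latter is completely monotone since $f$ is a Bernstein function); hence $g_s$ is a Bernstein function. Equivalently, $g_s$ is the sum of the two Bernstein functions $\id$ and $sf$, cf.\ Theorem~\ref{TH_properties-of-BF}(E). Moreover $\Re g_s(\zeta)=\Re\zeta+s\,\Re f(\zeta)>0$ for all $\zeta\in\UH$, so $g_s\in\Hol(\UH,\UH)$.

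Next I would apply the product formula for one-parameter semigroups to the family $(g_s)_{s>0}$, exactly as this tool is used in the proof of Theorem~\ref{TH_EvolFam-HVF} (see \cite[Theorem~10.6.1]{BCD-Book} or \cite[Theorem~3]{ProdFormula}). Since
$$
\frac{\id_\UH-g_s}{s}\;=\;-f\;=\;\phi\qquad\text{for every }s>0,
$$
this difference quotient converges to $\phi$ in $\Hol(\UH,\C)$ as $s\to0^+$ — trivially, as it does not depend on $s$. Hence $\phi$ is the infinitesimal generator of a one-parameter semigroup $(v_t)$ in $\Hol(\UH,\UH)$ — which could also be read off from the Berkson\,--\,Porta representation \eqref{EQ_BP_H-infty} with $P:=f$, cf.\ Remark~\ref{RM_BP-in-H} — and, for every fixed $t\ge0$,
$$
v_t\;=\;\lim_{n\to+\infty}\bigl(\id_\UH+\tfrac{t}{n}\,f\bigr)^{\circ n}\qquad\text{in }\Hol(\UH,\UH).
$$
By the previous paragraph each $\id_\UH+\tfrac tn f$ lies in $\BF$, and $\BF$ is closed under composition by Theorem~\ref{TH_properties-of-BF}(D); therefore $\bigl(\id_\UH+\tfrac tn f\bigr)^{\circ n}\in\BF$ for every $n$. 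Since $\BF$ is closed in $\Hol(\UH,\UH)$ by Theorem~\ref{TH_properties-of-BF}(B),(D),(F), the limit $v_t$ belongs to $\BF$ as well; as this holds for all $t\ge0$, we conclude that $(v_t)\subset\BF$, i.e.\ $\phi=-f\in\Gen(\BF)$.

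I do not expect any genuine obstacle: the only real content is the elementary stability $\id_\UH+sf\in\BF$, after which one has merely to notice that the difference quotients feeding the product formula are constant in $s$, and to confirm that the cited formula applies in this mildly degenerate situation where its defining limit is attained trivially.
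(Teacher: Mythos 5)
Your argument is correct, and the basic strategy coincides with the paper's: both use the Euler approximants $\psi_\varepsilon:=\id_\UH+\varepsilon f$, observe that $\psi_\varepsilon\in\BF$ and hence $\psi_{t/n}^{\circ n}\in\BF$, and conclude via the topological closedness of $\BF$. The genuine difference lies in how the convergence $\psi_{t/n}^{\circ n}\to v_t^\phi$ is justified. You obtain it at no cost from the Reich\,--\,Shoikhet exponential formula (\cite[Theorem~10.6.1]{BCD-Book}, \cite[Theorem~3]{ProdFormula}): since $(\id_\UH-\psi_s)/s\equiv -f=\phi$, the hypothesis of that theorem is satisfied trivially, and its conclusion delivers both $\phi\in\Gen$ and $v_t=\lim_n\psi_{t/n}^{\circ n}$ locally uniformly. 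This is legitimate — the paper itself invokes exactly this statement, in exactly this form, in the proof of Theorem~\ref{TH_EvolFam-HVF} (direction (i)$\Rightarrow$(ii)) — and it lets you bypass the bulk of the paper's proof: the reduction to the case $f'(\infty)>0$ via the perturbations $f+\delta\id_\UH$, the construction of the solutions $h_\varepsilon$ of Abel's equation, the normal-family argument showing $h_\varepsilon\to h=\int_1^{\,\cdot}\di\zeta/f(\zeta)$, and the identification of subsequential limits of $\psi_{t/n}^{\circ n}$ through the univalence of the Koenigs function. What the paper's longer route buys is a self-contained proof of the convergence of the Euler scheme that does not lean on the black-box product formula; what yours buys is brevity and the elimination of the case distinction on $f'(\infty)$. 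The supporting details you supply are all sound: $\Re f\ge0$ gives $\psi_s(\UH)\subset\UH$; $\psi_s=\id_\UH+sf$ is a Bernstein function by Theorem~\ref{TH_properties-of-BF}\,(E) (or by complete monotonicity of $1+sf'$); compositions stay in $\BF$ by (D); and closedness of $\BF$ in $\Hol(\UH,\UH)$ finishes the argument.
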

\begin{proof}
Let $f\colon \UH\to\C$ be a Bernstein function. Then $\Re f\ge0$ in~$\UH$ and hence, by Remark~\ref{RM_BP-in-H}, $\phi:=-f$ is an infinitesimal generator in~$\UH$. Clearly, we may suppose that $\phi\not\equiv0$. By Theorem~\ref{TH_Julia-half-plane}, the limit $f'(\infty):=\anglim_{z\to\infty}f(z)/z$ exists and belongs to~$[0,+\infty)$.

Note that we may restrict ourselves to the case $f'(\infty)>0$. To cover the case $f'(\infty)=0$, it would be sufficient to  consider the Bernstein functions $f+\delta\id_\UH$, $\delta>0$, and use the fact that by Corollary~\ref{CR_BernsteinGen-conv-cone}, $\Gen(\BF)$ is topologically closed in $\Hol(\UH,\C)$.

 For any $\varepsilon>0$, Euler's approximations of the one-parameter semigroup~$(v_t^\phi)$ generated by~$\phi$, which are defined recurrently by
$$
 v_0^\varepsilon=\id,\quad v_n^\varepsilon:=v_{n-1}^\varepsilon+\varepsilon f\circ v_{n-1}^\varepsilon,~{n\in\Natural},
$$
are Bernstein functions, because $\BF$ is a convex cone closed w.r.t. compositions and containing~$\id_\UH$, see Theorem~\ref{TH_properties-of-BF}.
Taking into account that by Theorem~\ref{TH_properties-of-BF}\,(F), $\BF$ is topologically closed in~$\Hol(\UH,\UH)$, we only need to show that for any ${t>0}$, $v_n^{t/n}\to v_t^\phi$ locally uniformly in~$\UH$ as~${n\to+\infty}$.

Notice that $v_n^\varepsilon$ is the $n$-th iterate of $\psi_\varepsilon(z):=z+\varepsilon f(z)$. Recall that $f'(\infty)>0$. Therefore,  $\psi_\varepsilon$ is a hyperbolic self-map of~$\UH$ with the DW-point at~$\infty$. Therefore, see e.g. \cite[Chapter~6, \S46]{Valiron:book}, for any ${\varepsilon>0}$ the limit $g_\varepsilon(z):={\lim_{n\to+\infty}\psi_\varepsilon^{\circ n}(z)/|\psi_\varepsilon^{\circ n}(1)|}$ exists for all~${z\in\UH}$; and moreover, $g_\varepsilon\in\Hol(\UH,\UH)$ is a non-constant solution to the functional equation $g_\varepsilon\circ\psi_\varepsilon=\psi'_\varepsilon(\infty) g_\varepsilon$.

Consider the functions
$$
h_\varepsilon:=\frac{\varepsilon}{\log\psi'_\varepsilon(\infty)}\log\circ\,g_\varepsilon,
$$
where $\log$ stands for the branch of the logarithm in~$\UH$ satisfying $\log 1=0$. For each ${\varepsilon>0}$, the function $h_\varepsilon$ solves Abel's functional equation
\begin{equation}\label{EQ_Abel-epsilon}
h_\varepsilon\circ\psi_\varepsilon=h_\varepsilon+\varepsilon.
\end{equation}

Recall that $g_\varepsilon(\UH)\subset\UH$. Hence $h_\varepsilon(\UH)$ lies in the horizontal strip of width $\varepsilon\pi/\log\psi'_\varepsilon(\infty)$ symmetric w.r.t.~$\Real$.
Since $\varepsilon/\log\psi'_\varepsilon(\infty)\to 1/f'(\infty)<+\infty$ as ${\varepsilon\to0^+}$, Montel's normality criterion, see e.g. \cite[\S\,II.7, Theorem~1]{Goluzin}, implies that ${\{h_\varepsilon:0<\varepsilon<1\}}$ is a normal family in~$\UH$.

From Abel's equation~\eqref{EQ_Abel-epsilon} we obtain
$$
  1=\frac{h_\varepsilon(z+\varepsilon f(z))-h_\varepsilon(z)}{\varepsilon}=\int_0^1 h'_\varepsilon(z+t\varepsilon f(z))f(z)\,\di t\quad\text{for any~$z\in\UH$.}
$$
If $h_{\varepsilon_n}\to h$ locally uniformly in~$\UH$ for some sequence $\varepsilon_n\to0^+$ as $n\to+\infty$, then the integral in the above equality tends along the sequence $(\varepsilon_n)$ to $h'(z)f(z)$. Taking into account that $h_\varepsilon(1)=0$ for all ${\varepsilon>0}$, we therefore conclude that
$$
h_\varepsilon(z)\to h(z):=\int_1^z\!\frac{\di\zeta}{f(\zeta)}\quad\text{as~$~\varepsilon\to0^+$.}
$$

The function $h$ is the Koenigs function of~$(v_t^\phi)$, see e.g. \cite[Theorem~10.1.4, p.\,274]{BCD-Book}. In particular, it satisfies Abel's equation $h\circ v_t^\phi=h+t$ for all~$t\ge0$.

On the other hand, if for some fixed $t\ge0$, and some sequence ${(n_k)\subset\Natural}$ tending to~$+\infty$, we have $v^{t/{n_k}}_{n_k}\to v_*$, where $v_*\in\Hol(\UH,\C)\cup\{v\equiv\infty\}$, then in fact, $v_*\in\Hol(\UH,\UH)$ and satisfies Abel's equation $h\circ v_*=h+t$. Indeed, since $\Re f\ge0$, by  construction we have $\Re v^{t/{n_k}}_{n_k}(z)\ge\Re z$ for all ${z\in\UH}$. It follows that either $v_*\in\Hol(\UH,\UH)$ or $v_*\equiv\infty$. The latter alternative is impossible because, as a holomorphic self-map of $\UH$, $\psi_{t/n_k}$ is non-expansive w.r.t. the hyperbolic distance $\rho$ in~$\UH$, and hence for any $z\in\UH$, we have
\begin{align*}
\rho\big(z,v^{t/{n_k}}_{n_k}(z)\big)=\rho\big(z,\psi_{t/{n_k}}^{\circ n_k}(z)\big)
&\le\sum_{j=1}^{n_k}\rho\big(\psi_{t/{n_k}}^{\circ(j-1)}(z),\psi_{t/{n_k}}^{\circ j}(z)\big) \\
&\le n_k\,\rho(z,z+tf(z)/n_k)\to\frac{t|f(z)|}{2\Re z}
\end{align*}
as $k\to+\infty$. Thanks to~\eqref{EQ_Abel-epsilon} with $\varepsilon:=t/n_k$, we have $h_{t/n_k}\circ v_{n_k}^{t/n_k}=h_{t/n_k}+t$. Passing to the limit as ${k\to+\infty}$, we obtain the desired relation $h\circ v_*=h+t$.

As a Koenigs function of a one-parameter semigroup, $h$ is univalent in $\UH$. Therefore, the equality $h\circ v_*=h\circ v_t^{\phi}$ is only possible if $v_*=v^\phi_t$. Since $v_n^{t/n}\in\Hol(\UH,\UH)$ for any ${n\in\Natural}$, ${t\ge0}$, and since $\Hol(\UH,\UH)$ is a normal family, the latter means that for each ${t\ge0}$, $v_n^{t/n}\to v_t^\phi$ in $\Hol(\UH,\UH)$ as ${n\to+\infty}$. This completes the proof.
 \end{proof}

\begin{proof}[\proofof{Theorem~\ref{TH_BG}}]
Suppose first that $\phi$ is a Bernstein generator. As usual, denote by $(v_t^\phi)$ the one-parameter semigroup in $\BF$ associated with~$\phi$. Since $\phi(z)=\lim_{t\to0^+}\big(z-v_t^\phi(z)\big)/t$ and the limit is attained locally uniformly in~$\UH$, it follows that $\phi$ is real-valued on~$(0,+\infty)$ and that
$$
 (-1)^{n}\phi^{(n)}(x)=\lim_{t\to0^+}\frac{(-1)^{n-1}(v_t^\phi)^{(n)}(x)}t\ge0
$$
for all $x\in(0,+\infty)$ and all~$n\ge2$, because $(-1)^{n-1}\big(v_t^\phi\big)^{(n)}(x)\ge0$ for all $t>0$ and all~$n\in\Natural$.

Since $\phi''(x)\ge0$ for all $x\in(0,+\infty)$, the limit $\phi(0):=\lim_{\Real\ni x\to0^+} \phi(x)$ exists and belongs to $(-\infty,+\infty]$. Considering the equation $\di w/\di t+\phi(w(t))=0$ with the initial condition $w(0)=x$ in a right neighbourhood of the origin on the real line, we see that the solution leaves $\UH$ at finite time if $\phi(0)>0$, which would contradict the fact that $\phi$ is an infinitesimal generator in~$\UH$.\smallskip

Thus, conditions~(i)\,--\,(iii) hold for every Bernstein generator. Suppose now that $\phi$ satisfies (i)\,--\,(iii) and show that in such a case $\phi$ is a Bernstein generator. By (i) and (iii), $\phi'$ is a real-valued non-decreasing function in $(0,+\infty)$. It follows that there exists the limit $\phi'(\infty):=\lim_{\R\ni x\to+\infty}\phi(x)$ belonging to $(-\infty,+\infty]$.

First let us consider the special case $\phi'(\infty)\neq+\infty$. It is worth mentioning that this is done for the sake of clarity rather than because it is logically necessary.
If $\phi'(\infty)\le0$, then $\phi'(x)\le0$ for all $x\in(0,+\infty)$ and hence conditions (i)\,--\,(iii) imply that $f:=-\phi$ is a Bernstein function. Therefore, by Proposition~\ref{PR_Bernstein-func-is-BG}, $\phi$ is a Bernstein generator.

Similarly, if $\phi'(\infty)\in(0,+\infty)$, then $f_1(z):=\phi'(\infty)z-\phi(z)$ is a Bernstein function. Hence $\phi=\phi_1+\phi_2$, where $\phi_1:=-f_1$ and $\phi_2(z):=\phi'(\infty)z$, is a Bernstein generator, because $\phi_1$ and $\phi_2$ belong to~$\Gen(\BF)$ by Proposition~\ref{PR_Bernstein-func-is-BG} and by Lemma~\ref{LM_BG-elementary}, respectively, and because $\Gen(\BF)$ is a convex cone by Corollary~\ref{CR_BernsteinGen-conv-cone}.

Now we present a (bit more sophisticated) proof valid both for finite and infinite values of~$\phi'(\infty)$.
Since $\phi''$ is completely monotone by~(iii), the limit $\phi''(\infty):=\lim_{\R\ni x\to+\infty}\phi''(x)$ exists and satisfies
\begin{equation}\label{EQ_second-derivative-at-infty}
\phi''(x)\ge\phi''(\infty)\ge0\qquad\text{for all~$x>0$}.
\end{equation}
It follows that the function $\varphi(z):=\phi(z)-\phi''(\infty)z^2/2$, $z\in\UH$, satisfies the same conditions~(i)\,--\,(iii) as~$\phi$. In particular, the limit $\varphi'(\infty):=\lim_{\R\ni x\to+\infty}\varphi(x)$ exists and belongs to ${(-\infty,+\infty]}$. In addition, $\varphi''(\infty)=0$. It follows that
\begin{equation}\label{EQ_varphi_prime-over-x}
\lim_{\R\ni x\to+\infty}\frac{\varphi'(x)}{x}=0.
\end{equation}

Using again Corollary~\ref{CR_BernsteinGen-conv-cone} together with the fact that $z\mapsto \varphi''(\infty)z^2$ is Bernstein generator by Lemma~\ref{LM_BG-elementary}, we may conclude that in order to complete the proof it is enough to show that $\varphi\in\Gen(\BF)$.

For $\varepsilon>0$ and $z\in\UH$, we define
\begin{align*}
  \phi_1^\varepsilon(z) & := {\varphi(z) -  \varphi'(1/\varepsilon)z}, \\
  \phi_\varepsilon(z)      & := \phi_1^\varepsilon\big( T_\varepsilon(z)\big)+\varphi'(1/\varepsilon)z,\quad\text{where $~T_\varepsilon(z):=z/(1+\varepsilon z)$}.
\end{align*}
Note that $(-\phi_1^\varepsilon)'$ is completely monotonic in $(0,1/\varepsilon)$. Moreover, $\lim_{\R\ni x\to0^+}\phi_1^\varepsilon(x)=\phi(0)\le0$ and hence $\phi_1^\varepsilon$ is non-positive on~$(0,1/\varepsilon)$. Since $T_\varepsilon$ is a Bernstein function  and $T_\varepsilon\big((0,+\infty)\big)=(0,1/\varepsilon)$, by Remark~\ref{RM_composition} it follows that $-\phi_1^\varepsilon\circ T_\varepsilon$ is also a Bernstein function. Therefore, arguing as above we may conclude that $\phi_\varepsilon\in\Gen(\BF)$ for every  ${\varepsilon>0}$.

Using~\eqref{EQ_varphi_prime-over-x}, we get:
\begin{eqnarray*}
  \lim\limits_{\varepsilon\to0^+} \phi_\varepsilon(z) &=&%
       \lim\limits_{\varepsilon\to0^+} \big(\varphi(T_\varepsilon(z))-\varphi'(1/\varepsilon)T_\varepsilon(z)+ \varphi'(1/\varepsilon)z\big)\\
  &=& \lim\limits_{\varepsilon\to0^+} \varphi(T_\varepsilon(z)) ~+~ %
         \lim\limits_{\varepsilon\to0^+} \dfrac{\varepsilon\, \varphi'(1/\varepsilon)\,z^2}{1+\varepsilon z}\\
  &=& \varphi(z) \vphantom{\int^1}.
\end{eqnarray*}
Since by Corollary~\ref{CR_BernsteinGen-conv-cone}, $\Gen(\BF)$ is closed in~$\Hol(\UH,\C)$,  it follows that $\varphi$ is a Bernstein generator and hence the proof is complete.
\end{proof}

\begin{proof}[\proofof{Corollary~\ref{CR_BG-BRFP0}}]
First of all, notice that according to Remarks~\ref{RM_BRFP-semigroups},~\ref{RM_critical-via-radial-limits}, and~\ref{RM_conformal-trans-of-generators}, a one-parameter semigroup $(v_t^\phi)\subset\Hol(\UH,\UH)$ with associated infinitesimal generator~$\phi$ has a BRFP at~$0$ if and only if
\begin{equation}\label{EQ_RFBR-at-0}
\phi(0):=\lim_{x\to0^+}\phi(x)~=~0 \quad\text{and}\quad \phi'(0):=\lim_{x\to0^+}\phi'(x)~\text{~exists finitely}.
\end{equation}

Suppose now that $\phi\in\Gen(\BF)$ and that the above condition~\eqref{EQ_RFBR-at-0} is satisfied. Following the lines of the argument given after the statement of Theorem~\ref{TH_BG}, which was used to deduce Silverstein's formula~\eqref{EQ_Silver-representation}, we represent $\phi''$ as the Laplace transform of a non-negative $\sigma$-finite Borel measure $\mu$ on~$[0,+\infty)$. Since $\phi'$ has a finite limit at~$0^+$, with the help of Tonelli's Theorem, we get
$$
\int_{0}^1\phi''(x)\DI x=\int_{0}^1\Big(\int_0^{+\infty}\!e^{-\lambda x}\,\mu(\di\lambda)\Big)\di x=\mu(\{0\})\,+\!\!\int\limits_{(0,+\infty)}\!\!\!\!\lambda(1-e^{-\lambda})\,\pi(\di\lambda)<+\infty,
$$
where $\pi$ is a Borel measure on~$(0,+\infty)$ given by $\pi(\di\lambda):=\lambda^{-2}\mu|_{(0,+\infty)}(\di\lambda)$. Clearly, the integrability condition we established is equivalent to~\eqref{EQ_BG-BRFP0-int-cond}.

With a similar technique, for all $x>0$ we further obtain
$$
\phi'(x):=\phi'(0)+\mu(\{0\})x\,+\!\!\int\limits_{(0,+\infty)}\!\!\!\!\lambda(1-e^{-\lambda x})\,\pi(\di\lambda).
$$
Extending this formula holomorphically to~$\UH$ and taking into account the first equality in~\eqref{EQ_RFBR-at-0}, we arrive at the desired representation~\eqref{EQ_LeGall-representation}.

Conversely, bearing in mind Bernstein's theorem on completely monotone functions, see e.g. \cite[Theorem~1.4 on p.\,3]{SSV12}, one can easily see that if $\phi$ is given by~\eqref{EQ_LeGall-representation} with some $c\in\Real$, $b\ge0$, and a Borel measure $\pi$ on $(0,+\infty)$ subject to the integrability condition~\eqref{EQ_BG-BRFP0-int-cond}, then $\phi$ is a holomorphic function in~$\UH$ satisfying conditions (i)\,--\,(iii) in Theorem~\ref{TH_BG} and~\eqref{EQ_RFBR-at-0}, with ${\phi'(0)=c}$. Therefore, $\phi$ is a Bernstein generator and the associated one-parameter semigroup $(v_t^{\phi})$ has a BRFP at~$0$.
 Moreover, with the different sign in the definition of the infinitesimal generator taken into account, from \cite[Theorem~1]{CoDiPo06} it follows that $(v_t^{\phi})'(0)=e^{-\phi'(0) t}=e^{-c\,t}$ for all ${t\ge0}$. Finally, the BRFP at~$0$ is the DW-point of $(v_t^{\phi})$ if and only if $(v_t^{\phi})'(0)\le 1$ for all~$t\ge0$, which is equivalent to the inequality ${c\ge0}$.
\end{proof}

\begin{proof}[\proofof{Corollary~\ref{CR_BG-BRFP-infty}}]
With the help of Remarks~\ref{RM_BRFP-semigroups}, \ref{RM_BRFP-radial}, and~\ref{RM_conformal-trans-of-generators} we see that a one-parameter semigroup $(v_t^\phi)\subset\Hol(\UH,\UH)$ with associated infinitesimal generator~$\phi$ has a BRFP at~$\infty$ if and only if
$\phi'(\infty):=\lim_{x\to+\infty}{\phi(x)}/{x}\neq\infty$.
Moreover, if $\phi\in\Gen(\BF)$, then according to Theorem~\ref{TH_BG}, $\phi''(x)\ge0$ for all ${x>0}$ and hence the limit $\lim_{x\to+\infty}\phi'(x)$ exists, finite or infinite, and coincides with~$\phi'(\infty)$.

We omit the rest of the proof since it is very similar to that of Corollary~\ref{CR_BG-BRFP0}.
\end{proof}

\subsection{Conditions for absolute continuity of evolution families}\label{SS_AC-conditions}
Several (necessary and/or sufficient) conditions for a family of holomorphic self-maps satisfying conditions EF1 and EF2 in Definition~\ref{DF_EF} (or conditions REF1 and REF2 in Definition~\ref{DF_REF} respectively) to be an \textit{absolutely continuous} (reverse) evolution family are known, see e.g.\ \cite[Theorem~7.3]{BCM1}, \cite[Section~2]{CDG_LCh}, \cite[Proposition~3.7]{CDG-AnnulusI}, \cite[Section~4.1]{CDG_decr}, \cite[Section~4]{Gum-represent}, \cite[Theorem~1]{GoryainovBRFP}.

The following result improves considerably \cite[Theorem~7.3]{BCM1} for the case of the boundary DW-point (except that we do not keep track on the order of integrability). For a domain $D\in\{\UD,\UH\}$ and a point $\tau\in\partial D$ denote by $\U_\tau$ the set of $\id_D$ and all self-maps $v\in\Hol(D,D)$ for which $\tau$ is the DW-point.
\begin{theorem}\label{TH_diff-one-trajectory}
Let $D\in\{\UD,\UH\}$ and $\tau\in\partial D$. Suppose that $(w_{s,t})_{(s,t)\in\Delta(I)}\subset\U_\tau$ satisfies conditions EF1 and EF2 in Definition~\ref{DF_EF}. If there exits ${z_0\in D}$ such that the map $I\ni t\mapsto w_{0,t}(z_0)$ belongs to $\AC(I)$, then $(w_{s,t})$ is an absolutely continuous evolution family.
\end{theorem}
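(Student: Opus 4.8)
The plan is to reduce to the model domain $D=\UH$ with $\tau=\infty$ and then to show that all the increments $w_{s,u}(z)-w_{s,t}(z)$ are dominated — \emph{linearly} and \emph{uniformly in~$s$} — by the increments of the one absolutely continuous trajectory $r\mapsto w_{0,r}(z_0)$. After a conformal change of variable (which preserves EF1, EF2, membership of each $w_{s,t}$ in $\U_\tau$ with the transformed Denjoy--Wolff point, local absolute continuity of a trajectory, and the property of being an absolutely continuous evolution family), I may assume $D=\UH$ and $\tau=\infty$. Then each $v\in\U_\infty$ different from $\id_\UH$ has $\lambda:=v'(\infty)\ge1$ and, by Theorem~\ref{TH_Julia-half-plane}, splits as $v(\zeta)=\lambda\zeta+g(\zeta)$ with $g\in\Hol(\UH,\overline\UH)$ and $g'(\infty)=0$; in particular $\Re v(\zeta)\ge\Re\zeta$ on $\UH$, so, using EF2, the function $t\mapsto\Re w_{0,t}(z)$ is non-decreasing for every $z\in\UH$.

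The heart of the argument is the following \emph{linear comparison lemma}: for any two compact sets $K,E\subset\UH$ there is a constant $C=C(K,E)<\infty$ such that
$$
|v(\zeta)-\zeta|\le C\,|v(\eta)-\eta|\qquad\text{for all }v\in\U_\infty,\ \zeta\in K,\ \eta\in E.
$$
To prove it I would use the splitting $v=\lambda\,\id_\UH+g$ (the case $v=\id_\UH$ being trivial). Evaluating at $\eta$ and separating real and imaginary parts, the inequalities $\Re g\ge0$, $\Re\eta>0$ give $\lambda-1\le|v(\eta)-\eta|/\Re\eta$, $\Re g(\eta)\le|v(\eta)-\eta|$ and $|\Im g(\eta)|\le(1+|\Im\eta|/\Re\eta)\,|v(\eta)-\eta|$. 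Next, $\zeta\mapsto g(\zeta)-i\Im g(\eta)$ is a holomorphic self-map of $\UH$ carrying $\eta$ to the non-negative real number $\Re g(\eta)$, so by the Schwarz--Pick inequality it sends $\zeta$ into the hyperbolic ball of radius $\rho(\zeta,\eta)$ about $\Re g(\eta)$ (here $\rho$ is the hyperbolic distance of $\UH$); since such a ball is a Euclidean disc whose points have modulus at most $\Re g(\eta)$ times an increasing function of $\rho(\zeta,\eta)$, this yields $|g(\zeta)|\le|v(\eta)-\eta|\,\Lambda(\zeta,\eta)$ with $\Lambda$ continuous on $\UH\times\UH$. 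Combined with $|(\lambda-1)\zeta|\le|v(\eta)-\eta|\,|\zeta|/\Re\eta$ this gives the claim, $C(K,E)$ being the supremum over $K\times E$ of the resulting continuous majorant. (Harnack's inequality for the positive harmonic function $\Re g$ gives an alternative route to the bound on $|g|$.)

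Granting the lemma, the proof concludes quickly. It suffices to work on a compact interval $[0,T]\subset I$, since being an absolutely continuous evolution family is a local condition in $t$ and, via the shift $\tilde w_{s,t}:=w_{a+s,a+t}$ with reference point $w_{0,a}(z_0)$, an arbitrary compact subinterval is reduced to one with left endpoint $0$. Put $E:=\{w_{0,r}(z_0):r\in[0,T]\}$; this is compact in $\UH$ because $r\mapsto w_{0,r}(z_0)$ is continuous and $\Re w_{0,r}(z_0)\ge\Re z_0>0$ by the monotonicity above. Fix $z\in\UH$ and write $\eta_r:=w_{0,r}(z_0)$. By EF1 and EF2, $w_{s,t}(\eta_s)=\eta_t$ for $0\le s\le t\le T$, so the lemma (with $K=\{z\}$) together with $\Re w_{s,t}(z)\ge\Re z$ shows that $w_{s,t}(z)$ lies, for all such $s,t$, in the fixed compact set $K_z:=\{\zeta\in\UH:\Re\zeta\ge\Re z,\ |\zeta-z|\le C(\{z\},E)\,\mathrm{diam}\,E\}$. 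Hence, for $0\le s\le t\le u\le T$, setting $\xi:=w_{s,t}(z)\in K_z$ and using EF2 and $w_{t,u}(\eta_t)=\eta_u$,
$$
|w_{s,u}(z)-w_{s,t}(z)|=|w_{t,u}(\xi)-\xi|\le C(K_z,E)\,|w_{t,u}(\eta_t)-\eta_t|=C(K_z,E)\,|w_{0,u}(z_0)-w_{0,t}(z_0)|.
$$
Since $r\mapsto w_{0,r}(z_0)$ belongs to $\AC(I)$, the right-hand increments are dominated by $\int_t^u g_z(r)\DI r$ for a non-negative $g_z\in\Lloc(I)$; then $f_z:=C(K_z,E)\,g_z$ verifies EF3 for $z$, and $(w_{s,t})$ is an absolutely continuous evolution family.

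I expect the main obstacle to be the comparison lemma, and specifically securing the \emph{linear} dependence of $|v(\zeta)-\zeta|$ on $|v(\eta)-\eta|$: a mere modulus-of-continuity bound — which follows painlessly from Montel's theorem, since a self-map of $\UH$ with an interior fixed point is the identity — would only produce Hölder, not integrable, majorants and would be useless for EF3. It is the Julia-lemma splitting $v=v'(\infty)\,\id_\UH+g$ together with the half-plane Schwarz--Pick (or Harnack) estimate on $g$ that upgrades the bound to a linear one. A secondary technical point is confining $w_{s,t}(z)$ to a single compact subset of $\UH$ independent of $s,t$: there the monotonicity $\Re w_{s,t}(\zeta)\ge\Re\zeta$ keeps it away from the boundary line $i\R$, while the comparison lemma keeps it bounded.
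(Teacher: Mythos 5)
Your proof is correct, but it takes a genuinely different route from the paper's. The paper conjugates by the affine automorphisms $L_t(z)=\bigl(z-i\Im w_{0,t}(z_0)\bigr)/\Re w_{0,t}(z_0)$, which pin the distinguished trajectory at the interior point~$1$; it then shows, via Julia's inequality $1\le w_{t_1,t_2}'(\infty)\le \Re w_{0,t_2}(z_0)/\Re w_{0,t_1}(z_0)$ and the chain rule for angular derivatives, that $t\mapsto \wt_{0,t}'(\infty)$ is locally absolutely continuous, and finally invokes the external results \cite[Theorem~4.2]{Gum-represent} (for families with a common interior Denjoy--Wolff point and a BRFP at~$\infty$) and \cite[Lemma~2.8]{CDG_LCh} to transfer absolute continuity back. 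You instead verify EF3 directly through a uniform linear comparison estimate $|v(\zeta)-\zeta|\le C(K,E)\,|v(\eta)-\eta|$ for $v\in\U_\infty$, obtained from the Julia splitting $v=v'(\infty)\id+g$ plus a Harnack/Schwarz--Pick bound on~$g$; this is in substance the paper's Lemma~\ref{LM_compare-with-id}, which the authors prove and deploy only for the \emph{reverse} analogue (Theorem~\ref{TH_reverse-diff-one-trajectory}). Your version is more self-contained and elementary, avoids the two external citations, correctly uses only EF1--EF2 plus the single absolutely continuous trajectory (no joint continuity in $(s,t)$, consistent with the theorem's hypotheses), and makes the forward and reverse cases structurally parallel; the paper's route, on the other hand, yields as a by-product the absolute continuity of the angular derivative at the boundary fixed point, which ties in with the Herglotz-vector-field description. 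One cosmetic remark: your majorant $f_z=C(K_z,E)\,g_z$ is only in $\Lloc(I)$, whereas EF3 as literally stated asks for $f_z\in\AC(I)$; the paper's own proof of Theorem~\ref{TH_reverse-diff-one-trajectory} concludes from exactly the same kind of bound, so this discrepancy lies in the paper's formulation of EF3 rather than in your argument.
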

\begin{proof}
Clearly, we may suppose that $D=\UH$ and that $\tau=\infty$.
For ${t\in I}$ denote
$$
 a(t):=\Re w_{0,t}(z_0)\quad \text{and}\quad b(t):=\Im w_{0,t}(z_0).
$$
By EF2, $w_{0,t_2}(z_0)={w_{t_1,t_2}\big(w_{0,t_1}(z_0)\big)}$.
Hence by Julia's Lemma for $\UH$, see Theorem \ref{TH_Julia-half-plane}, applied to the map $w_{t_1,t_2}$, for any $(t_1,t_2)\in\Delta(I)$ we have
\begin{equation}\label{EQ_w-prime}
1\le w_{t_1,t_2}'(\infty)\le {a(t_2)}/{a(t_1)},
\end{equation}
where the left inequality holds simply because $\infty$ is the DW-point of~$w_{t_1,t_2}$.

\newcommand{\wt}{\hskip-.01em\widetilde{\hskip.01em w}}
The family $(\wt_{s,t})$ defined by $\wt_{s,t}:=L_t\circ w_{s,t}\circ L_s^{-1}$ for all ${(s,t)\in\Delta(I)}$, where $L_t(z):=\big(z-ib(t)\big)/a(t)$, satisfies conditions EF1 and EF2 in Definition~\ref{DF_EF}. Moreover, its elements share the same DW-point at~$1\in \UH$ and a boundary regular fixed point at~$\infty$. By the Chain Rule for angular derivatives, see e.g. \cite[Lemma~2]{CoDiPo06},
$$
\frac{\wt_{0,t_2}'(\infty)}{\wt_{0,t_1}'(\infty)}=\wt'_{t_1,t_2}(\infty)=\frac{a(t_1)}{a(t_2)}\,w'_{t_1,t_2}(\infty)
\quad\text{for any~$~(t_1,t_2)\in\Delta(I)$.}
$$
Using~\eqref{EQ_w-prime} we therefore obtain
$$
 \frac{a(t_1)}{a(t_2)}\le \frac{\wt_{0,t_2}'(\infty)}{\wt_{0,t_1}'(\infty)}\le 1.
$$
Since by the hypothesis $a\colon I\to(0,+\infty)$ is a function of class $\AC$, the latter implies that the map $t\mapsto \wt_{0,t}'(\infty)$ belongs to $\AC(I)$. Recall that the self-maps $\wt_{s,t}$ share the same interior DW-point. Therefore, by \cite[Theorem~4.2]{Gum-represent}, $(\wt_{s,t})$ is an absolutely continuous evolution family. Since by the hypothesis $a,b\in\AC(I)$, it is easy to see that the automorphisms $L_{s,t}:=L_t\circ L_s^{-1}$ also form an absolutely continuous evolution family. Writing $w_{s,t}={L_t^{-1}\circ\wt_{s,t}\circ L_s}$ for all ${(s,t)\in\Delta(I)}$ and applying \cite[Lemma~2.8]{CDG_LCh}, we conclude that $(w_{s,t})$ is also an absolutely continuous evolution family.
\end{proof}

\begin{corollary}
Let $(v_{s,t})_{(s,t)\in\Delta(I)}$ be a topological reverse or usual evolution family in $D\in\{\UD,\UH\}$. Suppose that the following two conditions hold:
\begin{itemize}
\item[(i)] $v_{s,t}(D\cap\Real)\subset D\cap\Real$ for any $(s,t)\in\Delta(I)$;
\item[(ii)] $(v_{s,t})\subset\U_\tau$ for some $\tau\in\overline D$.
\end{itemize}
Then there exists a continuous increasing bijective map $u\colon J\to I$, $J\subset\Real$, such that the (reverse or usual) evolution family $(\tilde v_{s,t})_{(s,t)\in\Delta(J)}$ given by $\tilde v_{s,t}:=v_{u(s),u(t)}$ for all ${(s,t)\in\Delta(J)}$ is absolutely continuous.
\end{corollary}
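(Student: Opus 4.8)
The plan is to reduce to $D=\UH$, attach to the family a single continuous \emph{monotone} real function $m$ on $I$, and use $m$ to rescale time so that, after reparametrization, one trajectory --- or the derivative at the common fixed point --- becomes Lipschitz; then Theorem~\ref{TH_diff-one-trajectory} (when $\tau\in\partial D$) and \cite[Theorem~4.2]{Gum-represent} (when $\tau$ is interior) finish the proof. The Cayley map $z\mapsto(1+z)/(1-z)$ carries $\UD$ onto $\UH$ and $(-1,1)$ onto $(0,+\infty)$, so conjugating by it turns hypotheses (i)--(ii) and the conclusion into the analogous statements for $\UH$, and it commutes with any change of the time parameter; hence I assume $D=\UH$. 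I treat usual evolution families in detail, the reverse case being parallel via Remark~\ref{RM_REF-EF} and Theorem~\ref{TH_reverse-diff-one-trajectory} (using, for fixed $t_1\in I$, the backward trajectory $s\mapsto v_{s,t_1}(z_0)$ in place of the forward one). We may assume $w_{s,t}\not\equiv\id_\UH$, otherwise $u:=\id$ works.

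Fix $z_0\in(0,+\infty)$. By EF2 and (i), $\gamma(t):=w_{0,t}(z_0)$ lies in $(0,+\infty)$ and is continuous. I would build $m$ as follows. If $\tau=\infty$: Julia's Lemma (Theorem~\ref{TH_Julia-half-plane}) applied to $w_{s,t}$ gives $\Re w_{s,t}(\zeta)\ge w_{s,t}'(\infty)\,\Re\zeta\ge\Re\zeta$ (the last step since $\infty$ is the DW-point), so $\gamma(t)=w_{s,t}(\gamma(s))\ge\gamma(s)$ for $s\le t$; take $m:=\gamma$ (non-decreasing). If $\tau=ic\in i\R$: Julia's Lemma at the boundary DW-point $\tau$ gives $q\big(w_{s,t}(\zeta)\big)\le w_{s,t}'(\tau)\,q(\zeta)\le q(\zeta)$ for the horocyclic functional $q(\zeta):=|\zeta-ic|^2/\Re\zeta$, hence $t\mapsto q(\gamma(t))$ is non-increasing; since $q(x)=x+c^2/x$ on $(0,+\infty)$ is strictly increasing on $[|c|,+\infty)$ and bounded below by its minimum $2|c|$, choosing $z_0\ge|c|$ and a short intermediate-value argument show that $\gamma$ is in fact non-increasing, and again $m:=\gamma$. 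If $\tau\in\UH$: then $\U_\tau$ consists of $\id_\UH$ and non-automorphisms fixing $\tau$, so $w_{s,t}^{\circ n}\to\tau$; evaluating at a point of $(0,+\infty)$, which by (i) stays in $(0,+\infty)$, forces $\tau\in\overline{(0,+\infty)}\cap\UH=(0,+\infty)$, and I take $m(t):=w_{0,t}'(\tau)$, which lies in $(0,1]$ (positive because $w_{0,t}$, being an element of an evolution family, is univalent and real on $(0,+\infty)$, and $\le 1$ by Schwarz--Pick), is continuous by the Cauchy estimates, and is non-increasing because $w_{0,u}'(\tau)=w_{t,u}'(\tau)\,w_{0,t}'(\tau)$ with $w_{t,u}'(\tau)\in(0,1]$.

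With a continuous monotone $m\colon I\to\R$ at hand, set $\phi(t):=t\mp\big(m(t)-m(0)\big)$ with the sign chosen so that $\phi$ is strictly increasing; then $\phi$ is a continuous strictly increasing map of $I$ with $\phi(0)=0$, so $J:=\phi(I)$ is an interval of the form $[0,\cdot]$ or $[0,\cdot)$ and $u:=\phi^{-1}\colon J\to I$ is an increasing homeomorphism. From $\phi\circ u=\id_J$ one gets $0\le u(\theta_2)-u(\theta_1)\le\theta_2-\theta_1$ for $\theta_1\le\theta_2$, whence $\theta\mapsto m(u(\theta))=\pm\big(\theta-u(\theta)\big)+m(0)$ is $1$-Lipschitz, in particular in $\AC(J)$. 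The family $\tilde v_{s,t}:=v_{u(s),u(t)}$ satisfies EF1--EF2 and lies in $\U_\tau$. In the boundary cases $m=\gamma$, so $\theta\mapsto\tilde v_{0,\theta}(z_0)=\gamma(u(\theta))\in\AC(J)$ and Theorem~\ref{TH_diff-one-trajectory} shows $(\tilde v_{s,t})$ is an absolutely continuous evolution family. In the interior case $m=w_{0,\cdot}'(\tau)$, so $\theta\mapsto\tilde v_{0,\theta}'(\tau)\in\AC(J)$, and since $(\tilde v_{s,t})$ shares the interior fixed point $\tau$, \cite[Theorem~4.2]{Gum-represent} gives the same conclusion.

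The substantive step is the construction of $m$. It is immediate for $\tau=\infty$, but for $\tau\in i\R\setminus\{0\}$ it needs both the right choice of $z_0$ and the intermediate-value argument, because the horocyclic functional is monotone along trajectories yet not injective on $(0,+\infty)$; and for interior $\tau$ it needs the preliminary observation, extracted from (i), that $\tau$ is real. The remaining points --- that $\phi$ is a homeomorphism onto an interval of the prescribed form and that the reparametrized trajectory (resp.\ derivative) is Lipschitz --- are routine, and the passage from a single Lipschitz trajectory (resp.\ from Lipschitz dependence of the derivative at the interior fixed point) to absolute continuity of the whole family is delegated to Theorem~\ref{TH_diff-one-trajectory} (resp.\ \cite[Theorem~4.2]{Gum-represent}).
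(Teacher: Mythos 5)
Your treatment of the \emph{usual} evolution family with a boundary Denjoy\,--\,Wolff point is essentially the paper's proof: reparametrize by $t\mapsto t\pm(\gamma(t)-\gamma(0))$ with $\gamma(t)=v_{0,t}(z_0)$, observe that the inverse is $1$-Lipschitz so the reparametrized trajectory is Lipschitz, and invoke Theorem~\ref{TH_diff-one-trajectory}. (Your case $\tau\in i\Real\setminus\{0\}$ is vacuous: by Denjoy\,--\,Wolff the iterates $v_{s,t}^{\circ n}(z_0)$ lie in $(0,+\infty)$ and converge to $\tau$, so hypothesis (i) already forces $\tau\in\{0,\infty\}$ or $\tau\in(0,+\infty)$; the paper notes this and reduces $\tau=0$ to $\tau=\infty$ by $z\mapsto 1/z$.) In the interior case your reparametrizing function differs from the paper's (the derivative $v_{0,t}'(\tau)$ rather than the trajectory of $z_0$), which is a legitimate alternative, but the citation is off: \cite[Theorem~4.2]{Gum-represent} is used in this paper for the \emph{angular derivative at a boundary regular fixed point} of a family with interior DW-point, not for the derivative at the interior fixed point itself. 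The statement you actually need (EF1--EF2 plus a common interior fixed point plus locally absolutely continuous, non-vanishing derivative there implies absolute continuity) is true --- it is \cite[Theorem~7.3]{BCM1}, or can be derived from a Schwarz\,--\,Pick estimate --- but as written the step rests on the wrong reference. The paper instead keeps $\gamma$ as the reparametrizer and applies \cite[Proposition~3.7]{CDG-AnnulusI} to the two distinct absolutely continuous trajectories of $z_0$ and of $\tau$.

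The genuine gap is the reverse case, which you dismiss as ``parallel.'' It is not. First, Theorem~\ref{TH_reverse-diff-one-trajectory} requires absolute continuity of $t\mapsto v_{0,t}(z_0)$ (the dependence on the \emph{second} index from the fixed initial time $0$), whereas the trajectory you name, $s\mapsto v_{s,t_1}(z_0)$, is a different object and only controls the family for $s\le t_1$. Second, if one instead reduces to the usual case via Remark~\ref{RM_REF-EF}, one obtains an evolution family only over each compact subinterval $[0,S]$, and the resulting reparametrization a priori depends on $S$; when $I=[0,T)$ is not compact, producing a single $u\colon J\to I$ that works on all of $I$ is precisely the non-trivial point. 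The paper handles it by gluing countably many normalized backward trajectories into one monotone function, $f(s)=s+\sum_n' a_n^{-1}\bigl(z_0-v_{s,T_n}(z_0)\bigr)\ind_{[0,T_n]}(s)$ with $T_n\nearrow T$, and arguing on each $[0,T_n]$. Your sketch contains neither this construction nor the alternative of proving monotonicity of $t\mapsto v_{0,t}(z_0)$ for the \emph{reverse} family (which would require the extra observation, available from (i) and univalence, that each $v_{0,t_1}$ is increasing on $(0,+\infty)$, so that $v_{0,t_2}(z_0)=v_{0,t_1}\bigl(v_{t_1,t_2}(z_0)\bigr)$ is monotone in $t_2$). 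As it stands, the reverse case over a non-compact interval is unproved.
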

\begin{proof}
We prove the corollary only for the case $D=\UH$. The proof for ${D=\UD}$ is very similar and therefore, we omit it.

Furthermore, we first suppose that $(v_{s,t})$ is a topological evolution family and that ${\tau\in\partial\UH}$. Then (i) implies that $\tau\in\{0,\infty\}$. Using if necessary the change of variable ${z\mapsto1/z}$, we may suppose that $\tau=\infty$. Choose any $z_0\in(0,+\infty)$. The map $$I\ni t\mapsto f(t):=t+v_{0,t}(z_0)-z_0$$ is strictly increasing and continuous (because combining $v_{s,t}'(\infty)\ge1$ and Julia's lemma (Theorem \ref{TH_Julia-half-plane}) implies $\Re v_{s,t}(z) \ge \Re z$ on $\UH$). Therefore, it has a strictly increasing continuous inverse $u\colon J\to I$ mapping some interval $J\subset\Real$ onto~$I$. Clearly, the functions $\tilde v_{s,t}:=v_{u(s),u(t)}$,  ${(s,t)\in\Delta(J)}$, form a topological evolution family. Moreover, by construction,
\begin{equation}\label{EQ_v-reparam}
 \tilde v_{0,t}(z_0)=v_{0,u(t)}(z_0)=-u(t)+f(u(t))+z_0=-u(t)+t+z_0\in\Real
\end{equation}
for all ${t\in J}$. If $t_2\ge t_1$, $t_1,t_2\in I$, then  $v_{0,t_2}(z_0)=v_{t_1,t_2}(v_{0,t_1}(z_0))\ge v_{0,t_1}(z_0)$ and hence $f(t_2)-f(t_1)\ge t_2-t_1$. It follows that $u$ is Lipschitz continuous. Using~\eqref{EQ_v-reparam}  and Theorem~\ref{TH_diff-one-trajectory}, we conclude that the evolution family $(\tilde v_{s,t})$ is absolutely continuous.

If now $(v_{s,t})$ is a topological evolution family with the DW-point ${\tau\in\UH}$, then ${\tau\in(0,+\infty)}$.  Fix some $z_0\in(0,\tau)$ and define the family $(\tilde v_{s,t})$ literally by the same formula as above. In order to make sure that $f$ is strictly increasing in~$I$ also in this case, notice that since $t\mapsto v_{0,t}(z_0)\in(0,+\infty)$ is continuous and since elements of a topological evolution family are univalent \cite[Proposition~2.4]{CD_top-LTh}, we have $v_{0,t}\big((0,\tau)\big)\subset(0,\tau)$ for all ${t\in I}$. Hence the monotonicity of~$f$ follows from the fact that the holomorphic maps $v_{s,t}$ do not increase the hyperbolic distance in~$\UH$. For $z\in\{z_0,\tau\}$, the map $t\mapsto \tilde v_{0,t}(z)$ belongs $\AC(J)$.  Again by univalence of elements of a topological evolution family,  $\tilde v_{0,t}(z_0)\neq \tilde v_{0,t}(\tau)$ for all ${t\in J}$. By \cite[Proposition~3.7]{CDG-AnnulusI}, it follows that $(\tilde v_{s,t})$ is an absolutely continuous evolution family.

Suppose now that $(v_{s,t})$ is a topological \textit{reverse} evolution family. If $I$ is of the form $[0,T]$, $0<T<+\infty$, then applying the above argument to the topological evolution family $(\hat v_{s,t})_{(s,t)\in\Delta(I)}$ defined by $\hat v_{s,t}:=v_{T-t,T-s}$, we directly arrive to the desired conclusion.

To cover the case $I=[0,T)$, $0<T\le+\infty$, the argument has to be slightly modified. As above, we suppose that $\tau=\infty$ or $\tau\in(0,+\infty)$ and fix some $z_0\in(0,\tau)$. Next we choose an increasing sequence $(T_n)\subset (0,T)$ converging to~$T$ and set
$$
f(s):=s+\sum_{n=1}^{+\infty}{\vphantom{\sum}}'\,\frac{z_0-v_{s,T_n}(z_0)}{a_n}\ind_{[0,T_n]}(s),\quad a_n:=2^n\max_{s\in[0,T_n]}\big|z_0-v_{s,T_n}(z_0)\big|,
$$
where the prime in the summation symbol means that, in order to cover the possibility of $v_{0,T_n}=\id_\UH$ for some $n\in\Natural$, we omit all terms for which $a_n=0$.

Let $u:=f^{-1}\colon J\to I$, where $J:=f(I)$. Fix some $n\in\Natural$ and notice that $f(s)=f_n(s)+a_n^{-1}\big(z_0-v_{s,T_n}(z_0)\big)$, ${s\in [0,T_n]}$, where $f_n$ is a strictly increasing function on~$I$. Since $s\mapsto {z_0-v_{s,T_n}(z_0)}$ is non-decreasing, the function $f_n\circ u$ is Lipschitz continuous on~$f([0,T_n])$. This fact allows us to show, arguing to as above, that $\big(v_{T_n-u(t),T_n-u(s)}\big)$ is an absolutely continuous reverse evolution family on~$[0,T_n]$.
Since ${n\in\Natural}$ is arbitrary in this argument, we may conclude that $(v_{u(s),u(t)})$ is an absolutely continuous evolution family, as desired.
\end{proof}

Now we prove an analogue of Theorem~\ref{TH_diff-one-trajectory} for reverse evolution families.

\begin{theorem}\label{TH_reverse-diff-one-trajectory}
Let $D\in\{\UD,\UH\}$ and $\tau\in\partial D$. Let $(v_{s,t})_{(s,t)\in\Delta(I)}\subset\U_\tau$ be a topological reverse evolution family. If there exists ${z_0\in D}$ such that the map $I\ni t\mapsto v_{0,t}(z_0)$ belongs to $\AC(I)$, then the reverse evolution family $(v_{s,t})$ is absolutely continuous.
\end{theorem}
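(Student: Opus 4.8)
The plan is not to go through the time-reversal of Remark~\ref{RM_REF-EF} (for a reverse family the natural Loewner trajectories run in the first variable, while the hypothesis controls the slice $t\mapsto v_{0,t}(z_0)$, which is not one of them), but rather to verify condition REF3 directly, reducing everything to a quantitative control on how far $v_{t_1,t_2}$ can displace a point.

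First I would make the standard reductions: conjugating by a conformal automorphism, assume $D=\UH$ and $\tau=\infty$; and, since REF3 is a local condition, treat the case $I=[0,S]$ compact (the general case then follows by exhausting $I$ by compact subintervals). Next, collect the consequences of $(v_{s,t})$ being a \emph{topological} reverse evolution family contained in $\U_\infty$: each $v_{s,t}$ is univalent; $(s,t,\zeta)\mapsto v_{s,t}(\zeta)$ is continuous, so $t\mapsto v'_{0,t}(z_0)$ is continuous on $[0,S]$, and it is nowhere zero by univalence, whence $c_0:=\min_{[0,S]}|v'_{0,t}(z_0)|>0$; for each fixed $\zeta\in\UH$ the set $\{v_{s,t}(\zeta):(s,t)\in\Delta([0,S])\}$ is relatively compact in $\UH$, so in particular $R_0:=\sup_{(t_1,t_2)\in\Delta([0,S])}\rho_{\UH}\big(v_{t_1,t_2}(z_0),z_0\big)<\infty$; and Julia's lemma (Theorem~\ref{TH_Julia-half-plane}) together with $v'_{t_1,t_2}(\infty)\ge1$ gives $\Re v_{t_1,t_2}(\zeta)\ge\Re\zeta$ on $\UH$, so that $\psi_{t_1,t_2}:=v_{t_1,t_2}-\id_{\UH}\in\Hol(\UH,\overline\UH)$.

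The heart of the argument is the estimate
$$
 |v_{t_1,t_2}(z_0)-z_0|\ \le\ C_0\,|g(t_2)-g(t_1)|\qquad\text{for all }(t_1,t_2)\in\Delta([0,S]),\qquad g(t):=v_{0,t}(z_0).
$$
By REF2, $g(t_2)-g(t_1)=v_{0,t_1}\big(v_{t_1,t_2}(z_0)\big)-v_{0,t_1}(z_0)$, so this is a two-point distortion bound for the univalent map $v_{0,t_1}$: since its two arguments $v_{t_1,t_2}(z_0)$ and $z_0$ lie in a common hyperbolic ball of radius $R_0$, the Koebe growth theorem yields $|v_{0,t_1}(w_1)-v_{0,t_1}(w_2)|\ge c(R_0)\,|v'_{0,t_1}(z_0)|\,|w_1-w_2|$ with $c(R_0)>0$ independent of $t_1$, and then $|v'_{0,t_1}(z_0)|\ge c_0>0$ gives the claim with $C_0=(c(R_0)c_0)^{-1}$. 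I expect this to be the only delicate point: one must check that the distortion constant does not degenerate as $t_2\to t_1$ (it does not — the factor $|w_1-w_2|$ cancels) and that $c_0>0$, which is exactly where univalence and compactness of $[0,S]$ are used.

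Finally I would propagate the estimate to an arbitrary $z\in\UH$ and conclude. Applying to $\psi_{t_1,t_2}\in\Hol(\UH,\overline\UH)$ the Harnack-type inequality used in the proof of Proposition~\ref{PR-HVF-intergrability} (cf.~\eqref{EQ_p-est}) gives $|\psi_{t_1,t_2}(z)|\le C_z\,|\psi_{t_1,t_2}(z_0)|$ for $z$ in a fixed compact, hence, using $g\in\AC([0,S])$,
$$
 |v_{t_1,t_2}(z)-z|\ \le\ C_zC_0\,|g(t_2)-g(t_1)|\ \le\ C_zC_0\!\int_{t_1}^{t_2}\!|g'(r)|\DI r .
$$
For $s\le t\le u$, REF2 gives $v_{s,u}(z)-v_{s,t}(z)=v_{s,t}\big(v_{t,u}(z)\big)-v_{s,t}(z)$, and a crude Schwarz--Pick bound for $|v'_{s,t}|$ along the (compact) segment from $z$ to $v_{t,u}(z)$ turns the previous line into $|v_{s,u}(z)-v_{s,t}(z)|\le\int_t^u f_z$ with $f_z:=\mathrm{const}_z\cdot|g'|\in\Lloc(I)$ non-negative. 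This is REF3, and together with REF1 and REF2 it shows that $(v_{s,t})$ satisfies Definition~\ref{DF_REF}, i.e. is an absolutely continuous reverse evolution family.
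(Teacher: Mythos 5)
Your proof is correct, and its overall architecture coincides with the paper's: reduce to $D=\UH$, $\tau=\infty$; prove the pivotal estimate $|v_{t_1,t_2}(z_0)-z_0|\le C\,|v_{0,t_2}(z_0)-v_{0,t_1}(z_0)|$; transfer it to an arbitrary $z$ by writing $v_{t_1,t_2}-\id_\UH$ as a map into $\overline\UH$ (Julia's Lemma) and applying Harnack, which is exactly the paper's Lemma~\ref{LM_compare-with-id}; and finish by composing with $v_{s,t_1}$ and using a uniform bound on its local expansion (your Schwarz--Pick bound along a compact segment is an interchangeable variant of the paper's hyperbolic-contraction argument). The one genuine difference is how the pivotal estimate is obtained. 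The paper proves Lemma~\ref{LM_Lipschitz}: the map $(s,t,\zeta)\mapsto v_{s,t}^{-1}(\zeta)$ is locally Lipschitz in $\zeta$ on the open set $Q$, which is established via Carath\'eodory's Kernel Convergence Theorem (openness of $Q$, continuity of $(s,t,\zeta)\mapsto (v_{s,t}^{-1})'(\zeta)$, hence boundedness on compacta); the estimate then follows from $v_{0,t_1}^{-1}\big(v_{0,t_2}(z_0)\big)=v_{t_1,t_2}(z_0)$. You instead read the same identity forwards, $v_{0,t_2}(z_0)-v_{0,t_1}(z_0)=v_{0,t_1}\big(v_{t_1,t_2}(z_0)\big)-v_{0,t_1}(z_0)$, and invoke a two-point lower distortion bound (Koebe growth plus distortion within a fixed hyperbolic ball) for the univalent map $v_{0,t_1}$, together with $\inf_{t\in[0,S]}|v'_{0,t}(z_0)|>0$. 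Both routes rest on the same two inputs --- univalence of the elements of a topological reverse evolution family and compactness coming from joint continuity on $\Delta([0,S])$ --- so neither is more general; yours is more quantitative and self-contained (no kernel convergence), while the paper's lemma is cleaner to state and reusable. Two minor points you should make explicit in a final write-up: the degenerate case $v_{t_1,t_2}-\id_\UH\equiv ic$ with $c\in\R$ in the Harnack step (handled separately, as in Lemma~\ref{LM_compare-with-id}), and the patching of the constants $C_0=C_0(S)$, $M_{S,z}$ over an exhaustion $S_n\nearrow T$ when $I=[0,T)$, which is routine but needed to produce a single majorant $f_z\in\Lloc(I)$.
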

\begin{remark}
 In contrast to Theorem~\ref{TH_diff-one-trajectory}, the hypothesis in the above theorem contains the requirement that $(v_{s,t})$ is continuous in~$(s,t)$. This assumption is essential for the proof we give below. However, we do not know whether Theorem~\ref{TH_reverse-diff-one-trajectory} remains valid if we assume that $(v_{s,t})$ satisfies REF1 and REF2 from Definition~\ref{DF_REF} without requiring \textit{a priori} any regularity in $s$ and~$t$.
\end{remark}

To prove Theorem~\ref{TH_reverse-diff-one-trajectory} we need the following two lemmas.

\begin{lemma}\label{LM_compare-with-id}
Let $f\in\Hol(\UH,\UH)$, $f\neq\id_\UH$, and $z_0\in\UH$. Suppose  $\infty$ is the Denjoy\,--\,Wolff point of~$f$. Then for any compact set $K\subset\UH$ there exists a constant $A_K>0$  such that
\begin{equation}\label{EQ_compare-with-id}
\max_{z\in K}|f(z)-z|\le A_K |f(z_0)-z_0|.
\end{equation}
The constant $A_K$ depends on~$K$ and $z_0$, but not on~$f$.
\end{lemma}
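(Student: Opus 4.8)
The plan is to prove the estimate \eqref{EQ_compare-with-id} by reducing to the case where the functions have a common interior fixed point and then exploiting non-expansiveness of $f$ with respect to the hyperbolic distance in $\UH$. Since $\infty$ is the Denjoy--Wolff point of $f$, Julia's Lemma (Theorem~\ref{TH_Julia-half-plane}) gives $\Re f(z)\ge\Re z$ for all $z\in\UH$; in particular $\Re f(z_0)\ge\Re z_0>0$, so $f(z_0)\ne z_0$ precisely describes the non-trivial situation, and $|f(z_0)-z_0|$ is a meaningful positive quantity to compare against.

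First I would set $\rho$ for the hyperbolic (Poincar\'e) distance in $\UH$ and recall that $f$ is $\rho$-non-expansive. The key is to control $|f(z)-z|$ from above by a constant (depending only on the location of $z$, not on $f$) times $\rho(z_0, f(z_0))$, and then to control $\rho(z_0,f(z_0))$ in turn by a constant times the Euclidean distance $|f(z_0)-z_0|$. For the second of these, I would note that $f(z_0)$ lies in the half-plane $\{\Re z\ge\Re z_0\}$, so $z_0$ and $f(z_0)$ are at Euclidean distance $\le|f(z_0)-z_0|$ with both points staying in a region where the hyperbolic metric density is bounded by $1/(2\Re z_0)$; hence $\rho(z_0,f(z_0))\le C(z_0)|f(z_0)-z_0|$ for a constant depending only on $z_0$ --- wait, more carefully, one needs that the whole segment $[z_0,f(z_0)]$ stays in $\{\Re z\ge\Re z_0\}$, which holds since that half-plane is convex and contains both endpoints. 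For the first inequality, fix a compact $K\subset\UH$; for $z\in K$, $f(z)$ lies in $\{\Re\zeta\ge\Re z\}$ and $\rho(z,f(z))\le\rho(z,z_0)+\rho(z_0,f(z_0))+\rho(f(z_0),f(z))\le 2\rho(z,z_0)+\rho(z_0,f(z_0))$ by non-expansiveness; since $\rho(z,z_0)$ is bounded uniformly for $z\in K$, we get $\rho(z,f(z))\le c_K + \rho(z_0,f(z_0))$.

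The remaining point is to pass from a bound on $\rho(z,f(z))$ back to a bound on the Euclidean distance $|f(z)-z|$, uniformly over $z\in K$. Here I would argue as follows: either $\rho(z_0,f(z_0))$ is large, in which case $|f(z_0)-z_0|$ is bounded below by a positive constant (depending on $z_0$) --- because on the compact set $\{z:\Re z=\Re z_0,\ |z-z_0|\le R\}$ the hyperbolic distance to $z_0$ is bounded, so $\rho(z_0,f(z_0))$ large forces $|f(z_0)-z_0|$ large --- and then \eqref{EQ_compare-with-id} holds for the trivial reason that the left side is at most $\mathrm{diam}(K)+\sup_{z\in K}|f(z)|$, which one still needs to bound; or $\rho(z_0,f(z_0))$ is small, say $\le 1$, in which case $\rho(z,f(z))\le c_K+1$ for all $z\in K$, so $f(z)$ ranges over a fixed compact subset $\widetilde K$ of $\UH$ (the $\rho$-neighbourhood of $K$ of radius $c_K+1$), on which the hyperbolic metric density is bounded below, giving $|f(z)-z|\le C_K'\,\rho(z,f(z))\le C_K'(c_K+\rho(z_0,f(z_0)))$; combining with $\rho(z_0,f(z_0))\le C(z_0)|f(z_0)-z_0|$ and absorbing $c_K$ using $\rho(z_0,f(z_0))\ge$ some multiple of $|f(z_0)-z_0|$...

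The main obstacle, and the place where the argument must be done with care, is precisely this dichotomy: when $|f(z_0)-z_0|$ is small the estimate is genuinely a local hyperbolic-vs-Euclidean comparison on a fixed compact set and goes through cleanly, but one must still handle the regime where $|f(z_0)-z_0|$ is not small without the constant $A_K$ blowing up, and there the needed uniform bound $\sup_{z\in K}|f(z)|<\infty$ does \emph{not} follow from $f$ being an arbitrary self-map with DW-point $\infty$ --- it must be extracted from $\rho$-non-expansiveness together with a normalisation. I expect the cleanest route is: reduce once and for all to showing $|f(z)-z|\le A_K\max\{|f(z_0)-z_0|,\ \text{(something controlled)}\}$ and observe that $\rho(z,f(z))$ is \emph{always} bounded by $2\operatorname{diam}_\rho(K\cup\{z_0\}) + \rho(z_0,f(z_0))$, so that $f(K)$ is confined to the fixed compact $\rho$-ball $\widetilde K_R$ with $R = 2\operatorname{diam}_\rho(K\cup\{z_0\}) + \rho(z_0,f(z_0))$; the catch is that $R$ itself depends on $f$ through $\rho(z_0,f(z_0))$, so one genuinely needs the linear (not merely qualitative) comparison $|f(z)-z|\le C_K'\rho(z,f(z))$ valid on the unbounded family of balls $\widetilde K_R$, together with $\rho(z_0,f(z_0))\le C(z_0)|f(z_0)-z_0|$, and then the constants multiply out to a single $A_K$ depending only on $K$ and $z_0$. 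I would present exactly this chain, deferring the elementary hyperbolic-metric comparison estimates (density bounds, segment convexity) to a one-line justification.
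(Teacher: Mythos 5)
There is a genuine gap, and it sits exactly where your own write-up trails off. Your chain
$\rho(z,f(z))\le\rho(z,z_0)+\rho(z_0,f(z_0))+\rho(f(z_0),f(z))\le 2\rho(z,z_0)+\rho(z_0,f(z_0))$
carries the additive constant $c_K=2\sup_{z\in K}\rho(z,z_0)$, which does not vanish as $f\to\id_\UH$. The target inequality \eqref{EQ_compare-with-id} is purely multiplicative: when $f$ is close to the identity, $|f(z_0)-z_0|$ is arbitrarily small, so any bound of the form $|f(z)-z|\le C_K'(c_K+\rho(z_0,f(z_0)))$ is useless — there is no way to ``absorb $c_K$'' into a multiple of $|f(z_0)-z_0|$. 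The other branch of your dichotomy is also broken: the comparison $|f(z)-z|\le C_K'\,\rho(z,f(z))$ with $C_K'$ independent of where $f(z)$ lands is false in $\UH$, since the hyperbolic density $1/(2\Re\zeta)$ tends to $0$ as $\Re\zeta\to+\infty$; e.g.\ for $f(z)=\lambda z$ the hyperbolic displacement grows like $\log\lambda$ while the Euclidean one grows like $\lambda$. So neither regime of your argument closes, and the triangle-inequality strategy cannot be repaired, because it discards the one piece of information that makes the lemma true.

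That piece of information is already in your first paragraph: Julia's Lemma gives $\Re f(z)\ge\Re z$, i.e.\ $g:=f-\id_\UH$ maps $\UH$ into $\overline\UH$. The paper's proof consists of exactly this observation plus one more step: either $g$ is constant (then $A_K=1$ works), or $g\in\Hol(\UH,\UH)$ and the \emph{two-point Harnack inequality} for functions of positive real part — used verbatim as in the proof of Proposition~\ref{PR-HVF-intergrability}, transplanted to $\UH$ with base point $z_0$ — yields $|g(z)|\le C(z,z_0)\bigl(|{\Im g(z_0)}|+\Re g(z_0)\bigr)\le \sqrt2\,C(z,z_0)\,|g(z_0)|$ with $C$ bounded on compacta and independent of $g$. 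This is the genuinely multiplicative, $f$-independent comparison your approach lacks; you should apply Harnack to $f-\id_\UH$ rather than tracking hyperbolic displacements of $f$ itself.
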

\begin{proof}
By Julia's Lemma for $\UH$, see Theorem \ref{TH_Julia-half-plane}, either $g(z):=f(z)-z$ is constant, or $g\in\Hol(\UH,\UH)$. In the former case, \eqref{EQ_compare-with-id}~holds with $A_K:=1$. In the latter case, it is enough to apply Harnack's inequality to the function~$g$ in the same manner as in the proof of Proposition~\ref{PR-HVF-intergrability}.
\end{proof}

\begin{lemma}\label{LM_Lipschitz}
Under the hypothesis of Theorem~\ref{TH_reverse-diff-one-trajectory}, the map
$
 {(s,t,\zeta)\mapsto v_{s,t}^{-1}(\zeta)}
$
is Lipschitz in~$\zeta$ on each compact subset~$K$ of
\begin{equation*}
 Q:=\{(s,t,\zeta)\colon (s,t)\in\Delta(I),~\zeta\in v_{s,t}(D)\},
\end{equation*}
i.e., for any $(s,t)\in\Delta(I)$ and $\zeta_1,\zeta_2\in v_{s,t}(D)$ such that ${(s,t,\zeta_j)\in K}$, ${j=1,2}$, we have
\begin{equation}\label{EQ_Lipschitz}
|v_{s,t}^{-1}(\zeta_2)-v_{s,t}^{-1}(\zeta_1)|\le C_K |\zeta_2-\zeta_1|,
\end{equation}
where $C_K>0$ is a constant depending on~$K$.
\end{lemma}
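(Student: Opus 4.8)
The plan is to prove Lemma~\ref{LM_Lipschitz} from three ingredients. First, recall that the elements of the topological reverse evolution family $(v_{s,t})$ are univalent: by Remark~\ref{RM_REF-EF}, for any $S\in I$ the maps $w_{s,t}:=v_{S-t,S-s}$ form a topological evolution family over $[0,S]$, and elements of topological evolution families are univalent by \cite[Proposition~2.4]{CD_top-LTh}. Hence each $v_{s,t}\colon D\to v_{s,t}(D)$ is a conformal bijection and $v_{s,t}^{-1}$ is holomorphic on the open set $v_{s,t}(D)\subseteq D$. Second, since $\Delta(I)\ni(s,t)\mapsto v_{s,t}\in\Hol(D,D)$ is continuous and each $v_{s,t}$ is univalent, the standard argument-principle (Hurwitz/Rouch\'e) reasoning shows that $Q$ is open in $\Delta(I)\times D$, that the map $(s,t,\zeta)\mapsto v_{s,t}^{-1}(\zeta)$ is continuous on~$Q$, and that if $\{|\omega-\zeta_0|\le r\}\subset v_{s_0,t_0}(D)$ then $\{|\omega-\zeta|< r/2\}\subset v_{s,t}(D)$ for all $(s,t,\zeta)$ in a suitable neighbourhood of $(s_0,t_0,\zeta_0)$ in~$Q$. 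Third, the purely analytic input: applying the Schwarz\,--\,Pick lemma to $v_{s,t}^{-1}$ viewed as a holomorphic map of a disk $\{|\omega-\zeta|<r\}\subset v_{s,t}(D)$ into~$D$ gives
\[
 \big|(v_{s,t}^{-1})'(\zeta)\big|\;\le\;\frac{2}{r}\cdot\frac{1}{k_D\big(v_{s,t}^{-1}(\zeta)\big)},
\]
where $k_D$ denotes the density of the hyperbolic metric of~$D$. Since $k_{\UD}\ge2$ and $k_{\UH}(\omega)=1/\Re\omega$, in the case $D=\UD$ this reduces to $|(v_{s,t}^{-1})'(\zeta)|\le 1/r$, and in the case $D=\UH$ to $|(v_{s,t}^{-1})'(\zeta)|\le (2/r)\,\Re v_{s,t}^{-1}(\zeta)$.

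Next I would fix a compact set $K\subset Q$ and produce a uniform derivative bound. Using that $Q$ is open and $\Delta(I)\times D$ is locally compact, choose $\varepsilon_1>0$ so small that the $\varepsilon_1$-neighbourhood $\tilde K$ of~$K$ in $\Delta(I)\times D$ has compact closure $\overline{\tilde K}\subset Q$. Covering $\overline{\tilde K}$ by finitely many of the neighbourhoods supplied by the Rouch\'e argument yields a single $\rho>0$ such that $\{|\omega-\zeta|<\rho\}\subset v_{s,t}(D)$ for every $(s,t,\zeta)\in\overline{\tilde K}$. Combining this with the Schwarz\,--\,Pick bound above, the continuity of $(s,t,\zeta)\mapsto v_{s,t}^{-1}(\zeta)$ and the compactness of $\overline{\tilde K}$ (which, in the half-plane case, makes $\sup_{\overline{\tilde K}}\Re v_{s,t}^{-1}(\zeta)$ finite), one obtains a constant $C'=C'(K)>0$ with $|(v_{s,t}^{-1})'(\zeta)|\le C'$ for all $(s,t,\zeta)\in\overline{\tilde K}$.

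Finally, I would pass from this derivative bound to~\eqref{EQ_Lipschitz}. Let $(s,t,\zeta_1),(s,t,\zeta_2)\in K$. If $|\zeta_1-\zeta_2|<\min\{\rho,\varepsilon_1\}$, then the segment $[\zeta_1,\zeta_2]$ is contained in $\{|\omega-\zeta_1|<\rho\}\subset v_{s,t}(D)$, and every point $\zeta$ of this segment satisfies $(s,t,\zeta)\in\tilde K$ (being within distance $\varepsilon_1$ of $(s,t,\zeta_1)\in K$); integrating $(v_{s,t}^{-1})'$ along the segment therefore gives $|v_{s,t}^{-1}(\zeta_2)-v_{s,t}^{-1}(\zeta_1)|\le C'|\zeta_2-\zeta_1|$. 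If instead $|\zeta_1-\zeta_2|\ge\min\{\rho,\varepsilon_1\}$, one bounds the left-hand side trivially by $\operatorname{diam}\{v_{s,t}^{-1}(\zeta)\colon(s,t,\zeta)\in K\}$, which is finite by continuity and compactness, and divides by $\min\{\rho,\varepsilon_1\}$. Taking $C_K$ to be the larger of the two resulting constants finishes the proof. (Note that only the topological evolution family structure of $(v_{s,t})$ enters.)

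The main obstacle is not analytic — the Schwarz\,--\,Pick step is routine — but organizational: making every estimate uniform in the parameters $(s,t)$. This rests entirely on the continuity of $(s,t)\mapsto v_{s,t}$ in $\Hol(D,D)$, which must be propagated, via the argument principle, to uniform lower bounds on $\dist\big(\zeta,\C\setminus v_{s,t}(D)\big)$ near a compact set of parameters and to joint continuity of the inverse maps; the half-plane case additionally requires controlling $\Re v_{s,t}^{-1}$, for which this joint continuity is exactly what is needed.
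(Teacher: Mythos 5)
Your proof is correct and follows essentially the same route as the paper's: establish univalence of the $v_{s,t}$, show that $Q$ is open and that $(s,t,\zeta)\mapsto v_{s,t}^{-1}(\zeta)$ is jointly continuous, obtain a uniform bound on $(v_{s,t}^{-1})'$ over a compact neighbourhood $\overline{\tilde K}\subset Q$ of $K$, integrate along segments for nearby $\zeta_1,\zeta_2$, and dispose of the case of distant $\zeta_1,\zeta_2$ by a diameter bound. The paper invokes Carath\'eodory's Kernel Convergence Theorem where you run the Hurwitz/Rouch\'e argument by hand (these are the same content), and it gets the derivative bound from the joint continuity of $(s,t,\zeta)\mapsto(v_{s,t}^{-1})'(\zeta)$ together with compactness, whereas you derive it from the Schwarz--Pick lemma combined with the uniform inradius $\rho$ and, in the half-plane case, a bound on $\Re v_{s,t}^{-1}$; your variant uses marginally less (continuity of the inverse only, not of its derivative) but is otherwise the same argument.
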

\begin{proof}
By \cite[Theorem 3.16]{FHS-Diss} or combining Remark~\ref{RM_REF-EF} and \cite[Proposition~2.4]{CD_top-LTh}, we see that for any $(s,t)\in\Delta(I)$  the function $v_{s,t}$ is univalent in~$D$. From Carath\'eodory's Kernel Convergence Theorem, see e.g. \cite[\S{}II.5]{Goluzin} or \cite[Sect.\,3.5]{BCD-Book}, see also \cite[Sect.\,2]{AngularExt}, it follows that if ${(s_0,t_0,\zeta_0)\in Q}$, then for any $r>0$ such that $\overline{\,\UD_r(\zeta_0)}\subset v_{s_0,t_0}(D)$ there exists ${\delta>0}$ such that $\overline{\,\UD_r(\zeta_0)}\subset v_{s,t}(D)$ for any ${(s,t)\in\Delta(I)}$ with ${|s-s_0|<\delta}$ and ${|t-t_0|<\delta}$.
This implies that $Q$ is relatively open in ${\Delta(I)\times\C}$.

For $r>0$ and $(s_0,t_0,\zeta_0)\in\Delta(I)\times\C$, denote
$$
 B_r(s_0,t_0,\zeta_0):=\big\{(s,t,\zeta)\in\Delta(I)\times\C\colon |s-s_0|<r,\,|t-t_0|<r,\,|\zeta-\zeta_0|<r\big\}.
$$
Let $K$ be a compact subset of~$Q$. Then there exists  $\varepsilon>0$ such that $K':=\overline{\bigcup_{(s,t,\zeta)\in K}B_{\varepsilon}(s,t,\zeta)}$ is also a compact subset of~$Q$.

Again by Carath\'eodory's Kernel Convergence Theorem, the map $Q\ni(s,t,\zeta)\mapsto v_{s,t}^{-1}(\zeta)$ is continuous.  Therefore, it is sufficient to establish~\eqref{EQ_Lipschitz} under the additional condition that $|\zeta_2-\zeta_1|\le\varepsilon$. To this end notice that, by continuity, the function $Q\ni(s,t,\zeta)\mapsto (v_{s,t}^{-1})'(\zeta)$ is bounded on~$K'$, i.e. $|(v_{s,t}^{-1})'(\zeta)|\le C_K$ for all $(s,t,\zeta)\in K'$ and some constant ${C_K>0}$.

Therefore, if $(s,t,\zeta_1)\in K$, then $\{(s,t,\zeta)\colon|\zeta-\zeta_1|\le\varepsilon\}\subset K'$ and hence
$$
 \big|v_{s,t}^{-1}(\zeta_2)-v_{s,t}^{-1}(\zeta_1)\big|\le C_K|\zeta_2-\zeta_1|
$$
whenever $|\zeta_2-\zeta_1|\le\varepsilon$.
\end{proof}

\begin{proof}[\proofof{Theorem~\ref{TH_reverse-diff-one-trajectory}}]
First of all, obviously, we may assume that ${D=\UH}$ and ${\tau=\infty}$. Further, fix some $\Tau\in I$. According to the definition of a reverse evolution family, it is sufficient to show that for any ${z\in\UH}$ there exists a constant $M_{\Tau,z}>0$ such that for any $s$, $t_1$, $t_2$ satisfying $0\le s\le t_1\le t_2\le \Tau$, we have
\begin{equation*}
\big|v_{s,t_2}(z)-v_{s,t_1}(z)\big|\le M_{\Tau,z}\big|v_{0,t_2}(z_0)-v_{0,t_1}(z_0)\big|.
\end{equation*}

Apply Lemma~\ref{LM_Lipschitz} with
$$
 K=K_\Tau:=\big\{\big(0,t_1,v_{0,t_2}(z_0)\big)\colon  0\le t_1\le t_2\le \Tau \big\}.
$$
Clearly, $K_\Tau$ is compact. Moreover,  $v_{0,t_2}(\UH)\subset v_{0,t_1}(\UH)$ for any ${t_2\ge t_1}$, ${t_1,t_2\in I}$, because $v_{0,t_2}=v_{0,t_1}\circ v_{t_1,t_2}$; hence, $K_\Tau\subset Q$. In view of condition~REF2 in Definition~\ref{DF_REF}, inequality~\eqref{EQ_Lipschitz} for  ${\zeta_j:=v_{0,t_j}(z_0)}$, $j=1,2$, $s:=0$, and ${t:=t_1}$, yields
\begin{equation*}
\big|v_{t_1,t_2}(z_0)-z_0\big|\le C_{K_\Tau} \big|v_{0,t_2}(z_0)-v_{0,t_1}(z_0)\big|\quad \text{for any $(t_1,t_2)\in\Delta([0,\Tau])$.}
\end{equation*}
Now fix an arbitrary $z\in\UH$. Using Lemma~\ref{LM_compare-with-id} and the above inequality, we obtain
\begin{equation}\label{EQ_REV-OT-we-have-2}
\big|v_{t_1,t_2}(z)-z\big|\le A_{\{z\}}C_{K_\Tau} \big|v_{0,t_2}(z_0)-v_{0,t_1}(z_0)\big|\quad \text{for any $(t_1,t_2)\in\Delta([0,\Tau])$.}
\end{equation}

Since the hyperbolic and Euclidean distances are equivalent on the compact set
$
{\{v_{s,t}(z)\colon (s,t)\in\Delta([0,\Tau])\}}
$
and since the holomorphic mappings~$v_{s,t_1}\colon \UH\to\UH$, $s\in[0,t_1]$, do not increase the hyperbolic distance, from inequality~\eqref{EQ_REV-OT-we-have-2} it follows that for a suitable constant ${M_{\Tau,z}>0}$ and for any $s$, $t_1$, $t_2$ satisfying $0\le s\le t_1\le t_2\le \Tau$, we have
$$
\big|v_{s,t_2}(z)-v_{s,t_1}(z)\big|=\big|v_{s,t_1}(v_{t_1,t_2}(z))-v_{s,t_1}(z)\big|\le M_{\Tau,z} \big|v_{0,t_2}(z_0)-v_{0,t_1}(z_0)\big|.
$$
This completes the proof.
\end{proof}

Next we turn to families of holomorphic self-maps of $\UH$ formed by (extensions) of Bernstein functions. First we establish a (quantitative) rigidity type result. In contrast to arbitrary holomorphic self-maps, for which the Burns\,--\,Krantz rigidity property involves angular derivatives up to the third order, see e.g. \cite{Shoikhet-rigidity} and \cite{Dubinin}, for functions in~$\BF$, existence of finite limits of the first \textit{two}  derivatives at $z=0$ is sufficient. Recall that by definition of a Bernstein function, if $f\in\BF$ then $f'$ as well as all higher order derivatives of~$f$ are real-valued and monotonic on ${(0,+\infty)}$. Hence there exist (finite or infinite) limits
$$
 f^{(n)}(0):=\lim_{\R\ni x\to 0^+}f^{(n)}(x),\quad n\in\N.
$$
Moreover, there exists a non-negative finite limit $f(0):=\lim_{\R\ni x\to 0^+}f(x)$.

\begin{proposition}[Rigidity property of Bernstein functions]\label{PR_Bernstein-rigidity}
Let $f\in\BF$ and suppose that $f'(0)$ and $f''(0)$ are finite. Then
\begin{equation}\label{EQ_Bernstein-rigidity}
|f(z)-z|\le f(0)+\big|(f'(0)-1)z\big|+\tfrac12\big|f''(0)z^2\big|
\end{equation}
for any $z\in\UH$.
\end{proposition}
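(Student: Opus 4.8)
The plan is to invoke the integral representation of Bernstein functions from Theorem~\ref{TH_properties-of-BF}. Writing $f$ in the form~\eqref{EQ_LKh-formula}, which by part~(B) is valid for all $z\in\UH$,
\[
 f(z)=\alpha+\beta z+\int_{(0,+\infty)}\bigl(1-e^{-\lambda z}\bigr)\,\rho(\di\lambda),\qquad z\in\UH,
\]
I would first express the relevant quantities in terms of $f(0)$, $f'(0)$ and $f''(0)$. By part~(C), $\alpha=f(0)\ge0$ and $\beta=f'(\infty)\ge0$. Differentiating under the integral sign on any compact subinterval of $(0,+\infty)$ --- legitimate by the dominated convergence theorem, since on such a subinterval $\lambda e^{-\lambda x}$ and $\lambda^2 e^{-\lambda x}$ are bounded by constant multiples of $\min\{\lambda,1\}$, which is $\rho$-integrable --- one obtains, for $x>0$,
\[
 f'(x)=\beta+\int_{(0,+\infty)}\lambda e^{-\lambda x}\,\rho(\di\lambda),\qquad f''(x)=-\int_{(0,+\infty)}\lambda^2 e^{-\lambda x}\,\rho(\di\lambda).
\]
Letting $x\to0^+$ and using monotone convergence, the hypotheses that $f'(0)$ and $f''(0)$ be finite are seen to be equivalent to $\int_{(0,+\infty)}\lambda\,\rho(\di\lambda)<+\infty$ and $\int_{(0,+\infty)}\lambda^2\,\rho(\di\lambda)<+\infty$ respectively; moreover $f'(0)=\beta+\int_{(0,+\infty)}\lambda\,\rho(\di\lambda)$ and $|f''(0)|=\int_{(0,+\infty)}\lambda^2\,\rho(\di\lambda)$.

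With these integrability facts available, the second step is a routine rearrangement. Since $\int_{(0,+\infty)}\lambda\,\rho(\di\lambda)<+\infty$ and, by the pointwise bound established below, $\int_{(0,+\infty)}(1-e^{-\lambda z}-\lambda z)\,\rho(\di\lambda)$ converges absolutely, I may split
\[
 \int_{(0,+\infty)}\bigl(1-e^{-\lambda z}\bigr)\,\rho(\di\lambda)=\int_{(0,+\infty)}\bigl(1-e^{-\lambda z}-\lambda z\bigr)\,\rho(\di\lambda)+z\int_{(0,+\infty)}\lambda\,\rho(\di\lambda),
\]
so that, collecting the linear terms via $\beta+\int_{(0,+\infty)}\lambda\,\rho(\di\lambda)=f'(0)$,
\[
 f(z)-z=f(0)+\bigl(f'(0)-1\bigr)z+\int_{(0,+\infty)}\bigl(1-e^{-\lambda z}-\lambda z\bigr)\,\rho(\di\lambda),\qquad z\in\UH.
\]

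The crux is the elementary estimate $|1-e^{-w}-w|\le\tfrac12|w|^2$ for every $w$ with $\Re w\ge0$. I would prove it via Taylor's formula with integral remainder: from $e^{-w}=1-w+w^2\int_0^1(1-t)e^{-tw}\,\di t$ one gets $1-e^{-w}-w=-w^2\int_0^1(1-t)e^{-tw}\,\di t$, whence
\[
 |1-e^{-w}-w|\le|w|^2\int_0^1(1-t)\,e^{-t\Re w}\,\di t\le|w|^2\int_0^1(1-t)\,\di t=\tfrac12|w|^2,
\]
using $e^{-t\Re w}\le1$ for $t\ge0$ when $\Re w\ge0$. Taking $w=\lambda z$ with $\lambda>0$ and $z\in\UH$ (so $\Re w>0$) gives $|1-e^{-\lambda z}-\lambda z|\le\tfrac12\lambda^2|z|^2$, which is the bound used above; integrating it against $\rho$ bounds the modulus of the last integral by $\tfrac12|z|^2\int_{(0,+\infty)}\lambda^2\,\rho(\di\lambda)=\tfrac12|f''(0)|\,|z|^2=\tfrac12|f''(0)z^2|$. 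Substituting into the displayed identity for $f(z)-z$ and applying the triangle inequality, together with $f(0)\ge0$, gives precisely~\eqref{EQ_Bernstein-rigidity}. I do not anticipate any real obstacle; the only points requiring mild care are the justification of differentiation and of the passage $x\to0^+$ under the integral sign, which are standard applications of the dominated and monotone convergence theorems.
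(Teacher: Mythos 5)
Your proposal is correct and follows essentially the same route as the paper: the Lévy--Khintchine-type representation \eqref{EQ_LKh-formula}, identification of $f(0)$, $f'(0)$, $f''(0)$ with $\alpha$, $\beta+\int\lambda\,\rho(\di\lambda)$, $-\int\lambda^2\,\rho(\di\lambda)$ via monotone convergence, the pointwise bound $|1-e^{-\lambda z}-\lambda z|\le\tfrac12|\lambda z|^2$ for $z\in\UH$, and the triangle inequality. The only (immaterial) difference is that you prove the elementary estimate by Taylor's formula with integral remainder, whereas the paper uses Cauchy's Mean Value Theorem.
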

\begin{proof}
Let $f\in\BF$. Then for all $z\in\UH$,
\begin{equation}\label{EQ_RG-repres}
 f(z)=\alpha+\beta z+\int\limits_0^{+\infty}(1-e^{-\lambda z})\,\tau(\di\lambda),
\end{equation}
where $\alpha,\beta\ge0$ and $\tau$ is a Borel non-negative measure~$\tau$ on~$(0,+\infty)$ with $\int_0^\infty \min\{\lambda,1\}\,\tau(\di\lambda)<\infty$.
With the help of Levi's Monotone Convergence Theorem, it is elementary to show that
\begin{subequations}\label{EQ_RG-repres-A}
\begin{align}
               f(0)   &= \alpha,\\
[0,+\infty]\ni f'(0)  &= \beta\,+\,\int_0^{+\infty}\!\lambda\,\tau(\di\lambda),\\
[-\infty,0]\ni f''(0) &= -\,\int_0^{+\infty}\!\lambda^2\,\tau(\di\lambda).
\end{align}
\end{subequations}

Fix some $z\in\UH$.
Note that
\begin{equation}\label{EQ_elementary}
|\lambda z-(1-e^{-\lambda z})|\le\frac{|e^{-\lambda z}|}{2}|\lambda z|^2 <\frac{|\lambda z|^2}{2}\quad\text{for all~}~\lambda>0.
\end{equation}
To deduce this elementary inequality, it is sufficient to fix an arbitrary $\theta\in\Real$ and apply Cauchy's Mean Value Theorem twice starting with the pair of functions $f_1(t):=\Re\big(e^{-i\theta}z^{-2}\big(tz-(1-e^{-tz})\big)\big)$ and $f_2(t):=t^2$,  $t\in[0,\lambda]$.

Integrating inequality~\eqref{EQ_elementary} and combining it with~\eqref{EQ_RG-repres} and~\eqref{EQ_RG-repres-A}, we obtain
\begin{align*}
\frac{\big|f''(0)z^2\big|}2 ~=~ \displaystyle \int_0^{+\infty}\!\!\frac{\,|\lambda z|^2}2\,\tau(\di\lambda)
                         & \ge~ \displaystyle \left| \int_0^{+\infty}\!\!(1-e^{-\lambda z})\,\tau(\di\lambda)
                                                 - \int_0^{+\infty}\!\lambda z\,\tau(\di\lambda)         \right| \\[1.5ex]
                         &= \big|f(z)-z-f(0)-(f'(0)-1)z\big|.
\end{align*}
Thanks to the triangle inequality, this immediately implies~\eqref{EQ_Bernstein-rigidity}.
\end{proof}

\begin{remark}\label{RM_two-ways-to-define-derivatives}
Note that by Theorem~\ref{TH_properties-of-BF}\,(B), the unrestricted (and hence also angular) limit of $f\in\BF$ at~$0$ exists and equals $f(0):=\lim_{x\to0^+}f(x)$. Moreover, using formulas \eqref{EQ_RG-repres} and~\eqref{EQ_RG-repres-A}, together the integrability condition $\int_0^{\infty}\min\{\lambda,1\}\,\tau(\di\lambda)<+\infty$ for the measure~$\tau$ in~\eqref{EQ_RG-repres}, it is not difficult to see that  if $f\in\BF$ and the limit
$$
f''(0):=\lim_{x\to0^+}f''(x)
$$
is finite, then the unrestricted limits
$$
\lim_{\UH\ni z\to 0}\frac{f(z)-f(0)}{z-0}\quad\text{~and~}\quad \lim_{\UH\ni z\to 0}\frac{f'(z)-f'(0)}{z-0}
$$
exist, are finite and equal to~$f'(0)$ and $f''(0)$, respectively. Furthermore, thanks to the monotonicity of $f'$ and $f''$ on $(0,+\infty)$, a sort of converse is also true: for $f\in\BF$, existence of finite angular derivatives of the first and second order at~$0$ implies that $f'(0)$ and $f''(0)$ are finite. In particular, the two ways to define $f'(0)$ and $f''(0)$, as angular derivatives and via limits as $\Real\ni x\to0^+$, agree.
\end{remark}

Now we will use the rigidity property proved above in order to give sufficient conditions for a (reverse) evolution family of Bernstein functions to be absolutely continuous.

\begin{theorem}\label{TH_AC-BF-EF}
Let $(w_{s,t})_{(s,t)\in\Delta(I)}$ be a family in~$\BF$ satisfying conditions EF1 and EF2 in Definition~\ref{DF_EF}.  Suppose that the following assertions hold:
\begin{itemize}
\item[(i)] $w_{0,t}(0)=0$ for all~$t\in I$;
\item[(ii)] $w_{0,t}'(0)$ and $w_{0,t}''(0)$ are finite for all~$t\in I$;
\item[(iii)] the mappings $I\ni t\mapsto w'_{0,t}(0)$ and $I\ni t\mapsto w''_{0,t}(0)$ are of class~$\AC(I)$.
\end{itemize}
Then $(w_{s,t})$ is an absolutely continuous evolution family.
\end{theorem}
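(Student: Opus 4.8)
The plan is to verify condition EF3 of Definition~\ref{DF_EF}, since EF1 and EF2 are assumed; it will be enough to produce, for each $z\in\UH$, a non‑negative majorant on every compact subinterval $[0,\Tau]\subseteq I$ and then glue these together. First I would transfer the boundary data at $0$ from the maps $w_{0,t}$ to the whole family. Writing $a(t):=w_{0,t}'(0)$ and $b(t):=w_{0,t}''(0)$, note that $a,b\in\AC(I)$ by~(iii), that $a>0$ on $I$ (a Bernstein function taking the value $0$ at $0$ is either $\equiv 0$, hence not in $\BF$, or nonconstant, hence has strictly positive derivative on $(0,+\infty)$), and that on each $[0,\Tau]$ the function $a$ is bounded away from $0$ and from $\infty$ while $b$ is bounded. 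Differentiating the identity $w_{0,u}(x)=w_{s,u}\bigl(w_{0,s}(x)\bigr)$ on $(0,+\infty)$ once and twice and letting $x\to 0^+$ — all one‑sided limits at $0$ exist because the derivatives of a Bernstein function are monotone on $(0,+\infty)$, and they are finite by~(ii) — yields $w_{s,u}(0)=0$ together with the cocycle identities
\[
 w_{s,u}'(0)=\frac{a(u)}{a(s)},\qquad
 w_{s,u}''(0)=\frac{a(s)\bigl(b(u)-b(s)\bigr)+b(s)\bigl(a(s)-a(u)\bigr)}{a(s)^3}\,,
\]
valid for all $(s,u)\in\Delta(I)$; in particular $0$ is a boundary regular fixed point of every $w_{s,u}$.

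The core estimate comes from the rigidity property of Proposition~\ref{PR_Bernstein-rigidity}, applied twice. By EF2, $w_{s,u}(z)-w_{s,t}(z)=(w_{t,u}-\id_\UH)\bigl(w_{s,t}(z)\bigr)$ whenever $s\le t\le u$; since $w_{t,u}\in\BF$ and $w_{t,u}(0)=0$, Proposition~\ref{PR_Bernstein-rigidity} applied to $w_{t,u}$ at the point $\zeta:=w_{s,t}(z)$ gives
\[
 \bigl|w_{s,u}(z)-w_{s,t}(z)\bigr|\ \le\ \bigl|w_{t,u}'(0)-1\bigr|\,|\zeta|\ +\ \tfrac12\bigl|w_{t,u}''(0)\bigr|\,|\zeta|^2 .
\]
I would then bound the three factors on $[0,\Tau]$. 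Using the cocycle identities, $a(u)-a(t)=\int_t^u a'(r)\DI r$, $b(u)-b(t)=\int_t^u b'(r)\DI r$, and the bounds for $a,b$, one obtains $\bigl|w_{t,u}'(0)-1\bigr|\le\int_t^u g_1(r)\DI r$ and $\bigl|w_{t,u}''(0)\bigr|\le\int_t^u g_2(r)\DI r$ with non‑negative $g_1,g_2\in\Lloc(I)$ assembled from $|a'|$ and $|b'|$ (both right‑hand sides vanish as $u\downarrow t$, consistently with $w_{t,t}=\id_\UH$). Applying Proposition~\ref{PR_Bernstein-rigidity} once more, now to $w_{s,t}\in\BF$ at the point $z$, bounds $|\zeta|=|w_{s,t}(z)|$ on $\Delta([0,\Tau])$ by a constant $R=R(z,\Tau)$. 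Hence, for all $(s,t,u)\in\Delta(I)$ with $u\le\Tau$,
\[
 \bigl|w_{s,u}(z)-w_{s,t}(z)\bigr|\ \le\ \int_t^u\!\Bigl(R\,g_1(r)+\tfrac12 R^2\,g_2(r)\Bigr)\DI r,
\]
which is the bound required by EF3 restricted to $[0,\Tau]$.

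To remove the restriction to $[0,\Tau]$ I would pass to the total‑variation function of $(t,u)\mapsto\sup_{0\le s\le t}|w_{s,u}(z)-w_{s,t}(z)|$, which is subadditive by EF2 and, by the previous paragraph, locally absolutely continuous and dominated on compacts; its derivative is a single $f_z\in\Lloc(I)$ fulfilling EF3 on all of $I$. Together with EF1 and EF2 this shows that $(w_{s,t})$ is an absolutely continuous evolution family. The conceptual point is that the \emph{quadratic} rigidity bound of Proposition~\ref{PR_Bernstein-rigidity} is precisely strong enough that controlling only the two functions $t\mapsto w_{0,t}'(0)$ and $t\mapsto w_{0,t}''(0)$ suffices to control every increment $w_{s,u}(z)-w_{s,t}(z)$. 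I expect the main obstacle to be the very first step: justifying the boundary chain rule and the cocycle identities at $z=0$ — one must know that the first two derivatives of the $w_{s,t}$ have finite one‑sided limits at $0$ and that the chain rule survives the passage to those limits, which is exactly where the monotonicity of derivatives of Bernstein functions together with hypotheses (i)–(ii) is used. The passage from compact subintervals to all of $I$ is routine by comparison.
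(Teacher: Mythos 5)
Your argument is correct, and it reaches the conclusion by a genuinely different route from the paper's. Both proofs share the same two pillars: the cocycle identities for $w_{s,u}'(0)$ and $w_{s,u}''(0)$ obtained from EF2 by passing to the limit $x\to0^+$ (these are exactly the paper's formulas~\eqref{EQ_ChainRule-EF}, and your justification via monotonicity of the derivatives of Bernstein functions is the right one), and the rigidity estimate of Proposition~\ref{PR_Bernstein-rigidity}. From there the paper does \emph{not} verify EF3 directly: it first proves that each $w_{0,t}$ is injective (a separate open--closed argument using Hurwitz's theorem, again powered by the rigidity bound), deduces that the orbits $t\mapsto w_{0,t}(z_j)$ of two distinct points are of class $\AC$ and remain distinct on compact time intervals, and then invokes the external two-orbit criterion \cite[Proposition~3.7]{CDG-AnnulusI}. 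You instead verify EF3 head-on, applying Proposition~\ref{PR_Bernstein-rigidity} twice --- once to $w_{t,u}$ at the moving point $\zeta=w_{s,t}(z)$ to control the increment, once to $w_{s,t}$ at $z$ to keep $|\zeta|$ bounded on $\Delta([0,\Tau])$ --- and then dominating $|w_{t,u}'(0)-1|$ and $|w_{t,u}''(0)|$ by $\int_t^u$ of fixed multiples of $|a'|+|b'|$, which is legitimate since $a,b\in\AC(I)$ and $a$ is bounded away from $0$ on compacts. This buys a more self-contained proof (no univalence step, no external proposition) with an explicit majorant $f_z$; what it gives up is the structural information (injectivity of the maps, the two-orbit criterion) that the paper's route makes visible and reuses elsewhere. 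Two minor points: your closing ``total-variation'' gluing is described vaguely, but the passage from $[0,\Tau_n]$ to $I$ really is routine (partition $[t,u]$ by the endpoints $\Tau_k$ of an exhaustion and sum the estimates on each piece, noting that each piece lies in some $[0,\Tau_k]$); and the positivity of $a$ can be obtained even more directly from the fact that $0$ is a boundary regular fixed point of $w_{0,s}$, whose (angular) derivative is automatically positive.
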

Before proving the theorem, we obtain a corollary stating that analogous assertion holds for \textit{reverse} evolution families.
\begin{corollary}\label{CR_AC-BF-REF}
Let $(v_{s,t})_{(s,t)\in\Delta(I)}$ be a family in~$\BF$ satisfying conditions REF1 and REF2 in Definition~\ref{DF_REF}.  Suppose that the following assertions hold:
\begin{itemize}
\item[(i)] $v_{0,t}(0)=0$ for all~$t\in I$;
\item[(ii)] $v_{0,t}'(0)$ and $v_{0,t}''(0)$ are finite for all~$t\in I$;
\item[(iii)] the mappings $I\ni t\mapsto v'_{0,t}(0)$ and $I\ni t\mapsto v''_{0,t}(0)$ are of class~$\AC(I)$.
\end{itemize}
Then $(v_{s,t})$ is an absolutely continuous reverse evolution family.
\end{corollary}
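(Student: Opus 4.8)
The plan is to deduce Corollary~\ref{CR_AC-BF-REF} from Theorem~\ref{TH_AC-BF-EF} by time reversal. By Remark~\ref{RM_REF-EF} it is enough to show that for every $\Tau\in I$ the family $(w_{s,t})_{(s,t)\in\Delta([0,\Tau])}$ given by $w_{s,t}:=v_{\Tau-t,\Tau-s}$ is an absolutely continuous evolution family over $[0,\Tau]$. Such a family lies in $\BF$ and, directly from REF1 and REF2, satisfies EF1 and EF2; so all that remains is to verify hypotheses (i)--(iii) of Theorem~\ref{TH_AC-BF-EF} for $(w_{s,t})$. Unwinding the definition of $w_{s,t}$, these amount to: $v_{s,\Tau}(0)=0$ for all $s\in[0,\Tau]$; $v_{s,\Tau}'(0)$ and $v_{s,\Tau}''(0)$ are finite for all $s\in[0,\Tau]$; and the maps $t\mapsto v_{\Tau-t,\Tau}'(0)$ and $t\mapsto v_{\Tau-t,\Tau}''(0)$ belong to $\AC([0,\Tau])$. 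Since the hypotheses of the Corollary only control $v_{0,t}$, the crux is to propagate this information to all $v_{s,t}$ by means of the algebraic relations.

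To do so, first note that each $v_{0,t}$ is a non-constant Bernstein function: a nonzero constant is ruled out by $v_{0,t}(0)=0$ (hypothesis~(i)) and the zero function is excluded from $\BF$. Hence $v_{0,t}'$ is completely monotone and not identically zero, so it is strictly positive on $(0,+\infty)$ (a completely monotone function that vanishes somewhere would, being decreasing, vanish on a ray, hence identically by the identity theorem); thus $v_{0,t}$ is strictly increasing on $(0,+\infty)$, extends continuously to $0$ with value $0$ by Theorem~\ref{TH_properties-of-BF}, and $v_{0,t}'(0)\in(0,+\infty)$, the finiteness being hypothesis~(ii). Applying REF2 to the triple $(0,s,u)$ gives $v_{0,u}=v_{0,s}\circ v_{s,u}$ on $(0,+\infty)$; letting $x\to0^+$ and using that $v_{0,s}$ is strictly increasing with $v_{0,s}(0)=0$ forces $v_{s,u}(0)=0$ for every $(s,u)\in\Delta(I)$. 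Differentiating $v_{0,u}=v_{0,s}\circ v_{s,u}$ once and twice on $(0,+\infty)$ and passing to the limit $x\to0^+$ --- here one invokes that for $f\in\BF$ the functions $f'$ and $f''$ are monotone on $(0,+\infty)$, hence possess finite one-sided limits $f'(0)$, $f''(0)$ whenever these are assumed finite, consistently with Remark~\ref{RM_two-ways-to-define-derivatives} --- one finds that $v_{s,u}'(0)$ and $v_{s,u}''(0)$ exist, are finite, and satisfy
\[
v_{0,u}'(0)=v_{0,s}'(0)\,v_{s,u}'(0),\qquad
v_{0,u}''(0)=v_{0,s}''(0)\,\big(v_{s,u}'(0)\big)^{2}+v_{0,s}'(0)\,v_{s,u}''(0).
\]
Because $v_{0,s}'(0)>0$, these identities can be solved for $v_{s,u}'(0)$ and $v_{s,u}''(0)$, which settles hypotheses (i) and (ii) for $(w_{s,t})$.

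Specializing to $s=\Tau-t$, $u=\Tau$ yields, for $t\in[0,\Tau]$,
\[
v_{\Tau-t,\Tau}'(0)=\frac{v_{0,\Tau}'(0)}{v_{0,\Tau-t}'(0)},\qquad
v_{\Tau-t,\Tau}''(0)=\frac{v_{0,\Tau}''(0)-v_{0,\Tau-t}''(0)\,\big(v_{\Tau-t,\Tau}'(0)\big)^{2}}{v_{0,\Tau-t}'(0)}.
\]
By hypothesis~(iii), $t\mapsto v_{0,\Tau-t}'(0)$ and $t\mapsto v_{0,\Tau-t}''(0)$ lie in $\AC([0,\Tau])$; moreover $t\mapsto v_{0,\Tau-t}'(0)$ is continuous and strictly positive on the compact interval $[0,\Tau]$, hence bounded below by a positive constant, so its reciprocal is again in $\AC([0,\Tau])$. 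Since sums and products of functions in $\AC([0,\Tau])$ stay in $\AC([0,\Tau])$ (absolutely continuous functions on a compact interval being bounded), both right-hand sides above belong to $\AC([0,\Tau])$, which is hypothesis~(iii) for $(w_{s,t})$. Theorem~\ref{TH_AC-BF-EF} then gives that $(w_{s,t})_{(s,t)\in\Delta([0,\Tau])}$ is an absolutely continuous evolution family, and since $\Tau\in I$ was arbitrary, Remark~\ref{RM_REF-EF} yields that $(v_{s,t})$ is an absolutely continuous reverse evolution family.

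I expect the only genuinely delicate point to be the chain-rule computation of $v_{s,u}'(0)$ and $v_{s,u}''(0)$ together with the transfer of finiteness: one must make sure the relevant boundary derivatives exist as honest one-sided limits and that the compositions behave, which is precisely where the monotonicity of $f'$ and $f''$ for $f\in\BF$ (and Remark~\ref{RM_two-ways-to-define-derivatives}) is needed. Once those boundary identities are secured, the rest is routine manipulation of absolutely continuous functions.
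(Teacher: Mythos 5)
Your proof is correct and follows essentially the same route as the paper: reduce to Theorem~\ref{TH_AC-BF-EF} via Remark~\ref{RM_REF-EF}, use REF2 and the chain rule at the boundary point $0$ to express $v_{\Tau-t,\Tau}'(0)$ and $v_{\Tau-t,\Tau}''(0)$ through $v_{0,\cdot}'(0)$ and $v_{0,\cdot}''(0)$, and conclude via stability of $\AC$ under products and reciprocals of functions bounded away from zero. You merely fill in some details the paper leaves implicit (the verification that $v_{s,u}(0)=0$ and that the boundary derivatives of $v_{s,u}$ exist finitely), which is fine.
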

\begin{proof}
According to Remark~\ref{RM_REF-EF}, it is sufficient to show that for any $\Tau\in I$, the family $(w_{s,t})_{0\le s\le t\le \Tau}$ defined by $w_{s,t}:=v_{\Tau-t,\Tau-s}$ is an evolution family over~${[0,\Tau]}$.

In the same way as in the proof of Theorem~\ref{TH_AC-BF-EF} given below, one can ensure that $v_{0,s}'(0)$ does not vanish for any~${s\in I}$. Differentiating the relation $v_{0,\Tau-t}\circ w_{0,t}=v_{0,\Tau}$, we find that
$$
  w'_{0,t}(0)=\frac{v'_{0,\Tau}(0)}{v'_{0,\Tau-t}(0)},\quad%
  w''_{0,t}(0)=\frac{1}{v'_{0,\Tau-t}(0)}%
                \Big(v''_{0,\Tau}(0)-v''_{0,\Tau-t}(0)\Big[\frac{v'_{0,\Tau}(0)}{v'_{0,\Tau-t}(0)}\Big]^{\!2\,}\Big).
$$

It follows that $(w_{s,t})$ satisfies the hypothesis of Theorem~\ref{TH_AC-BF-EF} with $I$ replaced by ${[0,\Tau]}$. Thus, it is an absolutely continuous evolution family, as desired.
\end{proof}
\begin{proof}[\proofof{Theorem~\ref{TH_AC-BF-EF}}]
By the hypothesis, the elements of the evolution family $(w_{s,t})$ are holomorphic self-maps of $\UH$ with a boundary fixed point at~$z=0$, see Remark~\ref{RM_two-ways-to-define-derivatives} and Section~\ref{sec:BRFP}.
In particular, $w'_{0,s}(0)\neq0$ for any $s\in I$.
Therefore, using condition~EF2, for any $(s,t)\in\Delta(I)$ we obtain
\begin{equation}\label{EQ_ChainRule-EF}
 w_{s,t}'(0)=\frac{w'_{0,t}(0)}{w'_{0,s}(0)},\quad w''_{s,t}(0)=\frac{1}{\big(w'_{0,s}(0)\big)^2}\Big(w''_{0,t}(0)-w''_{0,s}(0)\frac{w'_{0,t}(0)}{w'_{0,s}(0)}\Big).
\end{equation}

Let us first show that $w_{0,t}$ is univalent for any~$t\in I$. To this end we adapt the proof of \cite[Proposition~2.4]{CD_top-LTh}. Fix ${z_1,z_2\in\UH}$, ${z_1\neq z_2}$. Denote
$$
 J:=\big\{s\in I\colon w_{0,s}(z_1)\neq w_{0,s}(z_2)\big\}.
$$
Clearly, $0\in J$. Moreover, $J$ is an interval, and it is relatively open in~$I$. Indeed, fix any ${s\in J}$. If ${0\le s'\le s}$, then condition~EF2 implies that ${s'\in J}$. Furthermore, using~(iii) and~\eqref{EQ_ChainRule-EF}, we see that ${w_{s,t}'(0)\to 1}$ and ${w_{s,t}''(0)\to 0}$ as ${t\to s^+}$. Applying Proposition~\ref{PR_Bernstein-rigidity} with ${f:=w_{s,t}}$, we conclude that there exists $\varepsilon>0$, which may depend on~$s$, such that if $(s,t)\in\Delta(I)$ and $t<s+\varepsilon$, then
\begin{multline*}
 \big|w_{0,t}(z_2)-w_{0,t}(z_1)-\big(w_{0,s}(z_2)-w_{0,s}(z_1)\big)\big| \le\\
 \big|w_{s,t}(w_{0,s}(z_2))-w_{0,s}(z_2)\big|+\big|w_{s,t}(w_{0,s}(z_1))-w_{0,s}(z_1)\big|
                                               <|w_{0,s}(z_2)-w_{0,s}(z_1)|
\end{multline*}
and hence $t\in J$.

Suppose that $J\neq I$. Then there exists $t_0\in I\setminus\{0\}$ such that $\zeta_0:=w_{0,t_0}(z_1)= w_{0,t_0}(z_2)$ while for any ${t\in[0,t_0)}$,  $w_{0,t}(z_1)\neq w_{0,t}(z_2)$. Using again (iii) and~\eqref{EQ_ChainRule-EF} we see that there exists $s_0\in[0,t_0)$ such that
$$
 a:=\sup_{s_0\le t\le t_0}\Big(\big|w'_{s_0,t}(0)-1\big|+\frac{|w''_{s_0,t}(0)|^2}{2}\Big)<1.
$$
Therefore, by Proposition~\ref{PR_Bernstein-rigidity} applied to $z:=1$ and $f:=w_{s_0,t}$,  for all ${t\in[s_0,t_0]}$ we have ${w_{s_0,t}(1)\in\!\overline{\,\UD_{a}(1)}}\subset\UH$. Since the holomorphic self-maps $w_{s_0,t}$ are non-expansive w.r.t. the hyperbolic distance in~$\UH$, it follows that
$$
 \{w_{0,t}(z_j)\colon j=1,2,~s_0\le t\le t_0\}=\{w_{s_0,t}(w_{0,s_0}(z_j))\colon j=1,2,~s_0\le t\le t_0\}
 \subset K,
$$
where $K\subset\UH$ is a closed ball w.r.t. the hyperbolic distance in~$\UH$. Trivially, $K$ is compact.
Again combining (iii), \eqref{EQ_ChainRule-EF}, and Proposition~\ref{PR_Bernstein-rigidity}, we see that ${w_{t,t_0}(z)\to z}$ uniformly on~$K$ as $[s_0,t_0]\ni t\to t_0$. Hence
\begin{equation}\label{EQ_zj-to-zete0}
  \big|\zeta_0-w_{0,t}(z_j)\big|=\big|w_{t,t_0}(w_{0,t}(z_j))-w_{0,t}(z_j)\big|\to0\quad\text{as $~[s_0,t_0]\ni t\to t_0$}.
\end{equation}

Fix some neighbourhood~$U$ of $\zeta_0$ contained in~$\UH$ along with its boundary. By~\eqref{EQ_zj-to-zete0}, for all $t<t_0$ close enough to~$t_0$, the function $f_t(z):=w_{t,t_0}(z)-\zeta_0$ has two distinct zeros in~$U$, namely, $z=w_{0,t}(z_j)$, $j=1,2$. At the same time, $f_t(z)\to z-\zeta_0$ uniformly in~$U$ as ${[s_0,t_0]\ni t\to t_0}$. The limit function has only one simple zero in~$U$, which contradicts Hurwitz's Theorem. Therefore, $J=I$, i.e. ${w_{0,t}(z_1)\neq w_{0,t}(z_2)}$ for all~${t\in I}$.

Arguing as above we see that for any $\Tau\in I$, there exists a compact set $K_\Tau\subset\UH$ such that $w_{0,t}(z_j)\in K_\Tau$, $j=1,2$, for any~$t\in[0,\Tau]$. Hence in view of (iii),~\eqref{EQ_ChainRule-EF}, and Proposition~\ref{PR_Bernstein-rigidity}, the functions $I\ni t\mapsto w_{0,t}(z_j)$, $j=1,2$, are of class $\AC(I)$. Thus, the conclusion of the theorem follows from~\cite[Proposition~3.7]{CDG-AnnulusI}.
\end{proof}

\subsection{The second angular derivative of evolution families in $\BF$}\label{SS_2nd-derovative}
An elementary result from the theory of ODE is that for the solution $w(t)=w_{s,t}(z)$ to an initial value problem of the form
$$
\frac{\di w}{\di t}+\phi(w,t)=0,\quad w(s)=z,
$$
where $\phi(\cdot,t)$ is holomorphic in the neighbourhood of~$w=0$ and vanishes at that point, we have
\begin{align}
\label{EQ_1order-der}
 w'_{s,t}(0)&=\exp\Big(-\int_s^t\phi'(0,\xi)\,\di \xi\Big),\\
\label{EQ_2order-der}
 w''_{s,t}(0)&=-\,w_{s,t}'(0)\int_s^t\phi''(0,\xi)w'_{s,\xi}(0)\,\di \xi.
\end{align}

If now $\phi$ is Herglotz vector field in $\UH$ and the associated evolution family $(w_{s,t})$ has a BRFP at~$\sigma=0$, then formula~\eqref{EQ_1order-der}, where $w'_{s,t}(0)$ and $\phi'(0,t)$ are to be understood as angular derivatives, is justified by Theorem~\ref{thm:boundary_derivative}.

In view of Theorem~\ref{TH_AC-BF-EF}, it is worth extending \eqref{EQ_2order-der} to evolution families of Bernstein functions.
\begin{proposition}
Let $(w_{s,t})_{(s,t)\in\Delta(I)}$ be an absolutely continuous evolution family contained in $\BF$ and let $\phi$ be the associated Herglotz vector field. Suppose that for any ${(s,t)\in\Delta(I)}$, ${w_{s,t}(x)\to0}$ as $\Real\ni x\to0^+$. Then formula~\eqref{EQ_2order-der} holds, with $w'_{s,\xi}(0)$, $w''_{s,t}(0)$, and $\phi''(0,\xi)$ understood as the limits of $w'_{s,\xi}(x)$, $w''_{s,t}(x)$, and of $\phi''(x,\xi)$ as $\Real\ni x\to0^+$, respectively. These limits as well as the integrals in the r.h.s.\ of~\eqref{EQ_1order-der} and \eqref{EQ_2order-der} can be infinite.
\end{proposition}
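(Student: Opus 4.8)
The plan is to reduce the statement to the already-established first-order formula~\eqref{EQ_1order-der} together with the ODE description of the evolution family. For the interior-fixed-point case this is entirely routine: differentiating the Loewner--Kufarev ODE $\di w/\di t+\phi(w,t)=0$ twice in $z$ at the fixed point $w=0$ gives a linear inhomogeneous scalar ODE for $t\mapsto w_{s,t}''(0)$, whose solution is exactly~\eqref{EQ_2order-der}. The whole difficulty is that here $0$ is a \emph{boundary} regular fixed point, so a priori one may not differentiate under the angular-limit sign, and all the quantities involved may be infinite; one must justify passing from the real-variable identity on $(0,+\infty)$ to the boundary.

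First I would record that, since $w_{s,t}\in\BF$ and $w_{s,t}(x)\to0$ as $\R\ni x\to0^+$, Remark~\ref{RM_two-ways-to-define-derivatives} identifies the angular derivatives $w_{s,t}'(0)$ and $w_{s,t}''(0)$ with the real limits $\lim_{x\to0^+}w_{s,t}'(x)$ and $\lim_{x\to0^+}w_{s,t}''(x)$, finite or infinite; the same applies to $\phi(\cdot,t)$, which for a.e.\ $t$ is a Bernstein generator (Corollary~\ref{CR_Herglotz-VF-for-Bernstein-evol-fam}), so by Theorem~\ref{TH_BG} $\phi''(\cdot,t)$ is completely monotone on $(0,+\infty)$ and $\phi'(x,t),\phi''(x,t)$ have monotone (hence existing, possibly infinite) limits as $x\to0^+$. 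Next, working on the real axis where everything is classical, I would fix $z=x>0$ and differentiate the integral equation $w_{s,t}(x)=x-\int_s^t\phi(w_{s,\xi}(x),\xi)\,\di\xi$ once and twice in $x$, getting
\begin{align*}
 w_{s,t}'(x)&=\exp\Big(-\int_s^t\phi'(w_{s,\xi}(x),\xi)\,\di\xi\Big),\\
 w_{s,t}''(x)&=-\,w_{s,t}'(x)\int_s^t\phi''(w_{s,\xi}(x),\xi)\,w_{s,\xi}'(x)\,\di\xi,
\end{align*}
both valid for every $x>0$ by the standard smooth dependence of ODE solutions on initial data (the vector field is real-analytic in the space variable on $(0,+\infty)$ and Carathéodory in $t$).

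The core step is then the limit $x\to0^+$ in the second identity. Here I would exploit the sign structure coming from the Bernstein property: for a.e.\ $\xi$ the generator $\phi(\cdot,\xi)$ is a Bernstein generator, so $\phi''(\cdot,\xi)\ge0$ on $(0,+\infty)$, and $w_{s,\xi}'(x)>0$; moreover, because $w_{s,\xi}$ is a Bernstein function, $x\mapsto w_{s,\xi}(x)$ is increasing, and $x\mapsto w_{s,\xi}'(x)$ is decreasing (as $w''_{s,\xi}\le0$), so $w_{s,\xi}'(x)\uparrow w_{s,\xi}'(0)$ as $x\downarrow0$. Likewise, along trajectories $w_{s,\xi}(x)\downarrow w_{s,\xi}(0)=0$, and $\phi''(\cdot,\xi)$ being nonincreasing (completely monotone) gives $\phi''(w_{s,\xi}(x),\xi)\uparrow\phi''(0,\xi)$. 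Hence the integrand $\phi''(w_{s,\xi}(x),\xi)\,w_{s,\xi}'(x)$ is nonnegative and increases monotonically to $\phi''(0,\xi)\,w_{s,\xi}'(0)$ as $x\downarrow0$, so by the Monotone Convergence Theorem the integral $\int_s^t\phi''(w_{s,\xi}(x),\xi)w_{s,\xi}'(x)\,\di\xi$ converges (possibly to $+\infty$) to $\int_s^t\phi''(0,\xi)w_{s,\xi}'(0)\,\di\xi$; combined with $w_{s,t}'(x)\to w_{s,t}'(0)\in(0,+\infty)$ (finite and positive by Julia's Lemma, Theorem~\ref{TH_Julia-half-plane}, since $0$ is a BRFP) this yields~\eqref{EQ_2order-der}, with the convention that the right-hand side is $-\infty$-valued exactly when the integral diverges, matching the claimed possibility of infinite values. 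For completeness I would also note that the first-order formula~\eqref{EQ_1order-der} follows the same way (or directly from Theorem~\ref{thm:boundary_derivative} transported to $\UH$ via the Cayley map), using that $\phi'(w_{s,\xi}(x),\xi)\to\phi'(0,\xi)$ monotonically.

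The main obstacle is precisely this interchange of limit and integration in the presence of possibly infinite quantities and the need to make the monotonicity in $x$ rigorous simultaneously in the outer space variable of $\phi''$ and along the trajectory. The saving grace is that $\BF$ is stable enough --- monotone derivatives, positivity of $\phi''$ coming from Theorem~\ref{TH_BG}, and monotone dependence of trajectories --- that a single application of the Monotone Convergence Theorem settles it; I would not need any delicate uniform estimates. A minor subtlety to address is measurability in $\xi$ of $\xi\mapsto\phi''(0,\xi)$ and $\xi\mapsto w_{s,\xi}'(0)$, which follows since each is a pointwise limit (supremum) of the measurable maps $\xi\mapsto\phi''(x,\xi)$, $\xi\mapsto w_{s,\xi}'(x)$ as $x$ runs through a sequence tending to $0$.
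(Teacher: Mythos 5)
Your overall strategy coincides with the paper's: write the second-order variational identity on the positive real axis, observe that the Bernstein structure makes the integrand $\phi''\big(w_{s,\xi}(x),\xi\big)\,w'_{s,\xi}(x)$ non-negative and monotone as $x\downarrow 0$ (completely monotone $\phi''(\cdot,\xi)$ by Theorem~\ref{TH_BG} and Corollary~\ref{CR_Herglotz-VF-for-Bernstein-evol-fam}, monotone trajectories and derivatives), and pass to the limit by monotone convergence. Up to that point the argument is sound and matches the paper's proof almost step for step.

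There is, however, a genuine gap in your final step. You conclude by multiplying the integral by $w'_{s,t}(x)\to w'_{s,t}(0)$ and assert that $w'_{s,t}(0)\in(0,+\infty)$ is \emph{finite} ``by Julia's Lemma, since $0$ is a BRFP''. The hypotheses do not give you a BRFP at $0$: they only give a boundary fixed point (namely $w_{s,t}(x)\to0$), and the statement explicitly allows all the limits, including $w'_{s,t}(0)$, to be infinite. Julia's Lemma (Theorem~\ref{TH_Julia-half-plane}) yields finiteness of the angular derivative at $\infty$, not at $0$; at $0$ the derivative of a self-map of $\UH$ fixing $0$ is a positive number \emph{or} $+\infty$. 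A concrete counterexample to your claim: $\phi(z,t):=Cz-4\sqrt{z}$ satisfies (i)--(iii) of Theorem~\ref{TH_BG} with $\phi(0)=0$ but $\phi'(0)=-\infty$, so the associated (autonomous) evolution family satisfies the hypotheses of the proposition while $w'_{s,t}(0)=+\infty$. Consequently the product on the right of~\eqref{EQ_2order-der} is a priori of the form $(+\infty)\cdot(\text{something}\ge0)$, and you must rule out the indeterminate case $\infty\cdot 0$ (the case $0\cdot\infty$ is excluded since $w'_{s,t}(0)\neq0$, as shown in the proof of Theorem~\ref{TH_AC-BF-EF}). The paper does this by passing to the limit in the ratio $w''_{s,t}(x)/w'_{s,t}(x)$ and observing that $w'_{s,t}(0)=+\infty$ forces $\log w'_{s,t}(x)\to+\infty$, hence forces the integral in~\eqref{EQ_2order-der} to be infinite as well; alternatively, one can note that if the integral vanishes then $\phi''(\cdot,\xi)\equiv0$ for a.e.\ $\xi\in[s,t]$, which makes $w_{s,t}$ affine and $w'_{s,t}(0)$ finite. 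Your proof needs one of these arguments to be complete; everything else, including the measurability remark and the treatment of~\eqref{EQ_1order-der}, is fine.
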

\begin{proof}
Fix $(s,t)\in\Delta(I)$. An elementary computation shows that for any $x>0$,
\begin{equation}\label{EQ_log-der-of-der}
\frac{w''_{s,t}(x)}{w'_{s,t}(x)}=\,-\int_{s}^t\phi''\big(w_{s,\xi}(x),\xi\big)w'_{s,\xi}(x)\,\di\xi.
\end{equation}
By the hypothesis, the mappings $w_{s,\xi}$ belong to~$\BF$. In particular, for each $\xi\in[s,t]$, $x\mapsto w_{s,\xi}(x)$ is positive and non-decreasing on $(0,+\infty)$. Moreover, by Corollary~\ref{CR_Herglotz-VF-for-Bernstein-evol-fam}, $\phi(\cdot,\xi)$ is a Bernstein generator for a.e. $\xi\in[s,t]$. Therefore, using Theorem~\ref{TH_BG} we may conclude that for a.e.\ $\xi\in[s,t]$ fixed, $x\mapsto \phi''\big(w_{s,\xi}(x),\xi\big)$ is a non-negative non-increasing function. Finally, for each $\xi\in[s,t]$, $(0,+\infty)\ni x\mapsto w'_{s,\xi}(x)$ is positive and non-increasing, because $w_{s,\xi}$ is a Bernstein function and because it is univalent as an element of absolutely continuous evolution family. As a result, one can apply the Monotone Convergence Theorem to pass to the limit in~\eqref{EQ_log-der-of-der}:
\begin{equation}\label{EQ_LIM-in-log-der-of-der}
\lim_{x\to0^+}\frac{w''_{s,t}(x)}{w'_{s,t}(x)}
=\,-
\int_{s}^t\phi''(0,\xi)w'_{s,\xi}(0)\,\di\xi.
\end{equation}
In order to justify that we can now deduce~\eqref{EQ_2order-der} by multiplying both sides of~\eqref{EQ_LIM-in-log-der-of-der} by~$w_{s,t}'(0)$, it remains to notice that the indefinite form $0\cdot\infty$ cannot occur in the r.h.s.\ of~\eqref{EQ_2order-der}. Indeed, on the one hand, as we have seen in the proof of Theorem~\ref{TH_AC-BF-EF}, $w'_{s,t}(0)\neq0$. On the other hand, if $w'_{s,t}(0)=+\infty$, then $\log w'_{s,t}(x)\to+\infty$ as ${x\to0^+}$ and hence the limit in \eqref{EQ_LIM-in-log-der-of-der} must be infinite,
 which again excludes the possibility for an indefinite form.
\end{proof}


\begin{thebibliography}{10}
\bibitem{Abate:book} Abate, M.: \textsl{Iteration Theory of Holomorphic Maps on Taut
Manifolds.} Mediterranean Press, Rende, Cosenza, 1989.

\bibitem{AbateBracci:2010}
M.~Abate, F.~Bracci, M.~D. Contreras and S.~D{\'{\i}}az-Madrigal.
\newblock The evolution of {L}oewner's differential equations.
\newblock {\em Eur. Math. Soc. Newsl.}, 78:31--38, 2010.


\bibitem{BranchingProcesses} K. B. Athreya\ and\ P. E. Ney, {\it Branching processes}, Die Grundlehren der mathematischen Wissenschaften, Band 196, Springer-Verlag, New York, 1972. MR0373040


\bibitem{Bauer} R. O. Bauer, \textit{L\"{o}wner's equation from a noncommutative probability perspective,}
 J. Theoret. Probab. {\bf 17} (2004), no.~2, 435--456. MR2053711

\bibitem{BP78}
E.\ Berkson and H.\ Porta.
\newblock Semigroups of analytic functions and composition operators.
\newblock {\em Michigan Math.\ J.}\ 25 (1978), 101--115.


\bibitem{BCD-AK-measure}
F. Bracci, M. D. Contreras\ and\ S. D\'{\i}az-Madrigal, \textit{Aleksandrov-Clark measures and semigroups of analytic functions in the unit disc}, Ann. Acad. Sci. Fenn. Math. {\bf 33} (2008), no.~1, 231--240. MR2386848

\bibitem{BCM1}
F. Bracci, M.~D. Contreras, S. D{\'{\i}}az-Madrigal.
\newblock  Evolution families and the Loewner equation~I: the unit disc.
	\newblock {\em J. Reine Angew. Math.} {672} (2012), p.\ 1--37.

\bibitem{BCM-AMP}F. Bracci, M. D. Contreras\ and\ S. D\'{\i}az-Madrigal, On the Koenigs function of semigroups of holomorphic self-maps of the unit disc, Anal. Math. Phys. {\bf 8} (2018), no.~4, 521--540. MR3881011

\bibitem{BCD-Book} Bracci, F., Contreras, M.D., D\'{\i}az-Madrigal, S.: \textsl{Continuous semigroups of holomorphic functions in the unit disc.} Springer Monographs in Mathematics, 2020.
	

\bibitem{BRFP2015}
F. Bracci, M.D. Contreras, S. D{\'{\i}}az-Madrigal, P. Gumenyuk.
\newblock  Boundary regular fixed points in Loewner theory.
\newblock {\em Annali di Matematica Pura ed Applicata} {196} (2015), no.\ 1, p.\ 221--142.

\bibitem{Loewner-survey}F. Bracci, M.~D. Contreras, S.~Diaz-Madrigal, and  A.~Vasil'ev, \emph{Classical and stochastic L\"owner-Kufarev equations}, in {\it Harmonic and complex analysis and its applications}, 39--134, Trends Math, Birkh\"auser/Springer, 2014, Cham. MR3203100


\bibitem{CD_top-LTh} M.\,D. Contreras\ and\ S. D\'{\i}az-Madrigal, Topological Loewner theory on Riemann surfaces, J. Math. Anal. Appl. {\bf 493} (2021), no.~1, 124525, 18~pp. MR4141497

\bibitem{CDG_LCh} M.~D. Contreras, S.~D{\'{\i}}az-Madrigal, and P.~Gumenyuk,
\textsl{Loewner chains in the unit disk},
Rev. Mat. Iberoam. \textbf{26} (2010), no.~3, 975--1012.


\bibitem{CDG-AnnulusI} M.\,D. Contreras, S. D{\'{\i}}az-Madrigal, and P. Gumenyuk, \emph{Loewner theory in annulus I: Evolution families and differential equations}, Trans. Amer.
    Math. Soc. {\bf 365} (2013), no.~5, 2505--2543. MR3020107

\bibitem{CDG_decr}
M.\,D. Contreras, S. D{\'{\i}}az-Madrigal, and P. Gumenyuk.
\newblock Local duality in {L}oewner equations.
\newblock {\em J. Nonlinear Convex Anal.}, 15(2):269--297, 2014.

\bibitem{AngularExt} M.\,D. Contreras, S. D{\'{\i}}az-Madrigal, and P. Gumenyuk, {\it Angular extents and trajectory slopes in the theory of holomorphic semigroups in the unit disk}. Preprint 29pp. ArXiV2010.01883


\bibitem{CoDiPo06} M. D. Contreras, S. D\'iaz-Madrigal, and Ch. Pommerenke, {\it On boundary critical points for semigroups of analytic functions}. Math. Scand. \textbf{98} (2006), 125--142.



\bibitem{Dubinin} V.\,N. Dubinin, {\it On the boundary values of the Schwarzian derivative of a regular function}, Sb. Math. {\bf 202} (2011), no.~5-6, 649--663; translated from Mat. Sb. {\bf 202} (2011), no. 5, 29--44. MR2841516


\bibitem{FangLi} R. Fang and Z. Li, \textit{Construction of continuous-state branching
processes in varying environments,} Ann. Appl. Probab. {\bf 32}~(2022), no.~5, 3645--3673.

\bibitem{UFranz} U. Franz, \textit{Multiplicative monotone convolutions,} in {\it Quantum probability}, 153--166, Banach Center Publ., 73, Polish Acad. Sci. Inst. Math., Warsaw 2006. MR2423123

\bibitem{FHS-Diss} U. Franz, T. Hasebe\ and\ S. Schlei\ss inger, \emph{Monotone increment processes, classical Markov processes, and Loewner chains,} Dissertationes Math. {\bf 552} (2020), 119 pp. MR4152669



\bibitem{Goluzin} Goluzin, G.M.: \textsl{Geometric theory of functions of a complex variable}. Amer. Math. Soc., Providence, R.I., 1969. MR0247039 (Translated from G. M. Goluzin, {\it Geometrical theory of functions of a complex variable} (Russian), Second edition, Izdat. ``Nauka'', Moscow, 1966.)

\bibitem{Goryainov} V. V. Goryainov, \textit{Semigroups of analytic functions, and branching processes}, Soviet Math. Dokl. {\bf 43} (1991), no.~3, 826--829 (1992); translated from Dokl. Akad. Nauk SSSR {\bf 318} (1991), no. 5, 1046--1049. MR1121780

\bibitem{Goryainov2} V.V. Goryainov,
\textit{The embedding of iterations of probability-generating functions into continuous semigroups} (in Russian),
 Dokl. Akad. Nauk {\bf 330} (1993), no. 5, 539--541; translation in Russian Acad. Sci. Dokl. Math. {\bf 47} (1993),  554--557.

\bibitem{Goryainov1} V.V. Goryainov,
\textit{The K\"onigs function and fractional integration of probability-generating functions}
(in Russian), Mat. Sb. {\bf 184} (1993), no. 5, 55--74; translation in Russian Acad. Sci. Sb. Math. {\bf 79} (1994), 47--61.


\bibitem{Goryainov1996} V.V. Goryainov, \textit{Evolutionary families of analytic functions and time-nonhomogeneous Markov branching processes.} (English. Russian original) Dokl. Math. \textbf{53}(1996), No.\,2, 256--258; translation from
Dokl. Akad. Nauk \textbf{347}(1996), No.\,6, 729--731.

\bibitem{Goryainov3} V. V. Goryainov, \textit{The K\"onigs function and fractional integration of probability-generating functions.}, Sb. Math. {\bf 193} (2002), no.~7-8, 1009--1025; translated from Mat. Sb. {\bf 193} (2002), no. 7, 69--86. MR1936850


\bibitem{Goryainov-survey} V.\,V. Gorya\u{\i}nov, \emph{Semigroups of analytic functions in analysis and applications.} Russian Math. Surveys {\bf 67} (2012), no.~6, 975--1021; translated from Uspekhi Mat. Nauk {\bf 67} (2012), no. 6(408), 5--52. MR3075076

\bibitem{GoryainovBRFP} V.\,V. Goryainov, \textit{Evolution families of conformal mappings with fixed points and the L\"owner-Kufarev equation}, Mat. Sb. {\bf 206} (2015), no. 1, 39--68; translation in Sb. Math. {\bf 206} (2015), no.~1-2, 33-60.



\bibitem{Gre74} D.R.\ Grey, \emph{Asymptotic behaviour of continuous time, continuous state-space branching processes}, Journal of Applied Probability {\bf 11}, Issue 4 (1974), 669--677.


\bibitem{Gum-represent}P.~Gumenyuk, {\it Parametric representation of univalent functions with boundary regular fixed points}. Constr. Approx. {\bf 46} (2017), no.~3, 435--458.

\bibitem{Paper2} P.~Gumenyuk, T.~Hasebe, J.\,L. P\'erez, \textit{Loewner Theory for Bernstein functions II: applications to inhomogeneous continuous-state branching processes.} Preprint (2022) 34\,pp. ArXiv:2211.12442.

\bibitem{Harris} T. E. Harris, {\it The theory of branching processes}, Die Grundlehren der mathematischen Wissenschaften, Band 119, Springer-Verlag, Berlin, 1963. MR0163361

\bibitem{IkkeiJAM} I. Hotta, \emph{Loewner chains with quasiconformal extensions: an approximation approach}, J. Anal. Math. {\bf 143} (2021), no.~1, 123--149. MR4299156

\bibitem{MJ1958} M. Ji\v{r}ina, \emph{Stochastic branching processes with continuous state space}, Czechoslovak Math. J. {\bf 8(83)} (1958), 292--313. MR0101554

\bibitem{Kadets} V.\,M. Kadets, {\sl A course in functional analysis} (Russian), Khar'kovski\u{\i} Natsional'ny\u{\i} Universitet imeni V. N. Karazina, Kharkiv, 2006. MR2268285; English translation: V. Kadets, {\sl A course in functional analysis and measure theory}, Universitext, Springer, Cham, 2018. MR3822727

\bibitem{KW1971} K. Kawazu\ and\ S. Watanabe, {\sl Branching processes with immigration and related limit theorems}, Teor. Verojatnost. i Primenen. {\bf 16} (1971), 34--51. MR0290475; English translation: Theor. Probability Appl. {\bf 16} (1971), 36--54, DOI~10.1137/1116003.

\bibitem{Kyp14}
A.E.\ Kyprianou, Fluctuations of L\'evy processes with applications. Second Edition.  Springer-Verlag, Berlin, Heidelberg, 2014.

\bibitem{LeGall} J.-F. Le Gall, {\it Spatial branching processes, random snakes and partial differential equations}, Lectures in Mathematics ETH Z\"{u}rich, Birkh\"{a}user Verlag, Basel, 1999. MR1714707


\bibitem{Loewner}K. L\"{o}wner, \textit{Untersuchungen \"{u}ber schlichte
konforme Abbildungen des Einheitskreises}, Math. Ann. \textbf{89} (1923), 103--121.


\bibitem{Li11}
Z.\ Li.
\newblock \textsl{Measure-valued Branching Markov Processes.}
\newblock Springer-Verlag, Berlin/Heidelberg, 2011.

\bibitem{Li12}
Z.\ Li.
\newblock \textsl{Continuous-state branching processes.}
\newblock Preprint (2012) 73\,pp. ArXiv:1202.3223.

\bibitem{Li2020}
Z. Li, Continuous-state branching processes with immigration, in {\it From probability to finance}, 1--69, Math. Lect. Peking Univ, Springer, Singapore. MR4339413


\bibitem{Pombook75}  Ch. Pommerenke, {\sl Univalent functions}, Vandenhoeck and Ruprecht, G{\"o}ttingen, 1975

\bibitem{Pommerenke:BB} Ch. Pommerenke, {\sl Boundary behaviour of conformal maps}, Grundlehren der Mathematischen Wissenschaften, 299, Springer-Verlag, Berlin, 1992. MR1217706


\bibitem{ProdFormula}S. Reich\ and\ D. Shoikhet, \emph{Metric domains, holomorphic mappings and nonlinear semigroups,} Abstr. Appl. Anal. {\bf 3} (1998), no.~1-2, 203--228. MR1700285

\bibitem{OliverDiss} O. Roth, {\sl Control Theory in $\mathcal H(\mathbb D)$}. PhD Thesis, W\"urzburg, 1998.

\bibitem{SSV12}
R.\ L.\ Schilling, R.\ Song and Z.\ Vondra\v{c}ek, Bernstein Functions, Theory and Applications, Second Edition. Walter de Gruyter GmbH \& Co.\ KG, Berlin/Boston, 2012.

\bibitem{Sebastian} S. Schlei\ss inger, \textit{The chordal Loewner equation and monotone probability theory,} Infin. Dimens. Anal. Quantum Probab. Relat. Top. {\bf 20} (2017), no.~3, 1750016, 17 pp. MR3695424

\bibitem{ShoikhetBook} D. Shoikhet, {\it Semigroups in geometrical function theory}, Kluwer Academic Publishers, Dordrecht, 2001. MR1849612

\bibitem{Shoikhet-rigidity} D. Shoikhet, {\it Another look at the Burns-Krantz theorem}, J. Anal. Math. {\bf 105} (2008), 19--42. MR2438420

\bibitem{Sil68}
M.L.\ Silverstein, {\it A new approach to local times,} J.\ Math.\ Mech.\ 17, No.\ 11 (1968), 1023--1054.


\bibitem{Valiron:book} G. Valiron, {\sl Fonctions analytiques}, Presses Universitaires de France, Paris, 1954. MR0061658

\end{thebibliography}
\end{document}